\documentclass[reqno,12pt]{amsart}
\usepackage{amsthm,amsfonts,amssymb,euscript,
mathrsfs,graphics,color,amsmath,latexsym,marginnote}
\usepackage{dsfont}   
   
   

\theoremstyle{plain}

\usepackage{hyperref}
\usepackage{times}
\usepackage{mathtools}

\setlength{\marginparwidth}{3cm}

\numberwithin{equation}{section}


\oddsidemargin   0cm  
\evensidemargin 0cm  
\topmargin  0.05cm      
\headheight 0.37cm      
\headsep    0.38cm      
\textwidth  14.5cm      
\textheight 22.5cm     
\footskip   1.5cm      
%

\newcommand{\la}{\langle}
\newcommand{\ra}{\rangle}

\newcommand{\Lip}{\mathrm{Lip}}

\newtheorem{theorem}{Theorem}[section]
\newtheorem{proposition}[theorem]{Proposition}
\newtheorem{lemma}[theorem]{Lemma}
\newtheorem{corollary}[theorem]{Corollary}

\newtheorem{remark}[theorem]{Remark}
\newtheorem{remarks}[theorem]{Remark}

\newtheorem{definition}[theorem]{Definition}
\newcommand{\be}{\begin{equation}}
\newcommand{\ee}{\end{equation}}
\newcommand{\om}{\omega}

\newcommand{\e}{\varepsilon}

\newcommand{\R}{\mathbb R}
\newcommand{\C}{\mathbb C}

\newcommand{\Z}{\mathbb Z}

\newcommand{\N}{\mathbb N}
\newcommand{\T}{\mathbb T}

\renewcommand{\a }{\alpha }

\newcommand{\ii }{{\rm i} }

\newcommand{\g }{\gamma}

\newcommand{\vphi}{\varphi }

\renewcommand{\t }{\tau }

\newcommand{\mN}{\mathcal{N}}


\newcommand{\ph}{\varphi}

\newcommand{\mL}{\mathcal{L}}

\newcommand{\lm}{\lambda}

\newcommand{\mA}{\mathcal{A}}

\newcommand{\mR}{\mathcal{R}}
\newcommand{\mF}{\mathcal{F}}

\newcommand{\mS}{\mathcal{S}}
\newcommand{\mQ}{\mathcal{Q}}



\newcommand{\di}{{\mathrm d}}

\def\div{{\,\mbox{div}\,}}

\def\hat{\widehat}

\def\bar{\overline}

\def\cal{\mathcal}

\newcommand{\mD}{\mathcal{D}}
\newcommand{\odd}{\text{odd}}
\newcommand{\even}{\text{even}}

\def\ba{\begin{aligned}}
\def\ea{\end{aligned}}
\def\beginm{\begin{multline}}
\def\endm{\end{multline}}

\newcommand{\mB}{\mathcal{B}}




\setcounter{tocdepth}{3}
\makeatletter


%

%
\def\l@subsection{\@tocline{2}{0pt}{2.5pc}{5pc}{}}


\begin{document}
\bibliographystyle{plain}

\title{{\bf A KAM approach to the inviscid limit for the 2D Navier-Stokes equations}}

\date{}

\author{Luca Franzoi, Riccardo Montalto}

\maketitle

\noindent
{\bf Abstract.}
In this paper we investigate the inviscid limit $\nu \to 0$ for time-quasi-periodic solutions 
of the incompressible Navier-Stokes equations
on the two-dimensional torus $\T^2$, 
with a small time-quasi-periodic 
external force. 
 More precisely, we construct solutions of the forced Navier Stokes equation,  bifurcating from a given time quasi-periodic solution of the incompressible Euler equations and admitting vanishing viscosity limit to the latter, uniformly for all times and independently of the size of the external perturbation. Our proof is based on the construction of an approximate solution, up to an error of order $O(\nu^2)$ and on a fixed point argument starting with this new approximate solution. A fundamental step is to prove the invertibility of the linearized Navier Stokes operator at a quasi-periodic solution of the Euler equation, with smallness conditions and estimates which are uniform with respect to the viscosity parameter. To the best of our knowledge, this is the first positive result for the inviscid limit problem that is global and uniform in time and it is the first KAM result in the framework of the singular limit problems. 

\smallskip 

\noindent
{\em Keywords:} Fluid dynamics, Euler and Navier-Stokes equations, quasi-periodic solutions, inviscid limit

\noindent
{\em MSC 2010:} 35Q30,  35Q31, 37K55, 76M45. 



\tableofcontents

\section{Introduction}\label{introduction}

In this paper we consider the two dimensional Navier Stokes equations for an incompressible fluid on the two-dimensional torus $\T^2$, $\T := \R / 2 \pi \Z$, with a small time quasi-periodic forcing term
\begin{equation}\label{Eulero1}
\begin{cases}
\partial_t U + U \cdot \nabla U - \nu \Delta U + \nabla p = \e f(\omega t, x) \\
\div U = 0
\end{cases} 
\end{equation}
where $\e \in (0, 1)$ is a small parameter, $\omega \in \R^d$ is a Diophantine $d$-dimensional vector, $\nu > 0$ is the viscosity parameter,
the external force $f$ belongs to ${\mathcal C}^q(\T^d \times \T^2, \R^2)$ 
for some integer $q > 0$ large enough, 
$U = (U_1, U_2) : \R \times \T^2 \to \R^2$ is the velocity field, 
and $p : \R \times \T^2 \to \R$ is the pressure. The main purpose of this paper is to investigate the inviscid limit of the Navier Stokes equation from the perspective of the KAM (Kolmogorov-Arnold-Moser) theory for PDEs, which in a broad sense is the theory of the existence and the stability of periodic, quasi-periodic and almost periodic solutions for Partial Differential Equations. More precisely, we construct quasi-periodic solutions of \eqref{Eulero1} converging to the quasi-periodic solutions of the Euler equation, constructed in Baldi \& Montalto \cite{BaldiMontalto}, as the viscosity parameter $\nu \to 0$ and we provide a convergence rate $O(\nu)$ which is uniform for all times. As a consequence of our result, we obtain families of initial data for which the corresponding global quasi-periodic solutions  of the Navier Stokes equations converge to the ones of the Euler equation with a rate of convergence $O(\nu)$, uniformly in time. 
The main difficulty is that this is a {\it singular perturbation problem}, namely there is a small parameter in front of the highest order derivative. To the best of our knowledge, this is both the first result in which one exhibits solutions of the Navier Stokes equations converging globally and uniformly in time to the ones of the Euler equation in the vanishing viscosity limit $\nu \to 0$ and  the first KAM  result in the context of singular limit problems for PDEs. 

\noindent
 The zero-viscosity limit of the incompressible Navier-Stokes equations in bounded domains  is one of the most challenging  problems in Fluid Mechanics. The first results for smooth initial data ($H^s$ with $s \gg 0$ large enough) have been proved by Kato \cite{Kato1}, \cite{Kato2}, Swann \cite{Swann}, Constantin \cite{Constantin0} and Masmoudi \cite{Masmoudi}  in the Euclidean domain $\R^n$ or in the periodic box $\T^n$, $n=2,3$. For instance, it is proved  in \cite{Masmoudi} that, if the initial velocity field $u_0 \in H^s(\T^n)$, $s > n/2 + 1$, then the corresponding solutions $u_\nu(t, x)$ of Navier Stokes and $u(t, x)$ of Euler, defined on $[0, T] \times \T^n$, satisfiy
\begin{equation*}\label{stima Nader}
\begin{aligned}
& \| u_\nu - u \|_{L^\infty([0, T], H^s)} \to 0 \quad \text{as} \quad \nu \to 0 \\
& \text{and for} \quad s' < s \quad  \quad \| u_\nu(t) - u(t) \|_{H^{s'}} \lesssim (\nu t)^{\frac{s - s'}{2}}, \quad \forall t \in [0, T]\,. 
\end{aligned}
\end{equation*}
It is immediate to notice that the latter estimate holds only on finite time intervals and it is not uniform in time, with the estimate of the difference $u_\nu(t) - u(t)$ eventually diverging as $t \to + \infty$. For $n=2$, this kind of result has been proved in low regularity by Chemin \cite{Chemin} and Seis \cite{Seis}, with rates of convergence in $L^2$. We also mention same results for non-smooth vorticity. In particular, the inviscid limit of Navier Stokes equation has been addressed in the case of vortex patches in Constantin \& Wu \cite{Constantin1}, \cite{Constantin2}, Abidi \& Danchin \cite{Abidi-Danchin} and Masmoudi \cite{Masmoudi}, with {\it low} Besov type regularity in space. In this results one typically gets a bound only in $L^2$ of the form
$$
\| u_\nu(t) - u(t) \|_{L^2} \lesssim (\nu t)^\alpha \quad \text{for some} \quad \alpha> 0\,.
$$
In the case of non-smooth vorticity, the inviscid limit has been investigated by using a Lagrangian stochastic approach in Constantin, Drivas \& Elgindi \cite{Constantin3}, with initial vorticity $\omega_0 \in L^\infty(\T^2)$, and in Ciampa, Crippa \& Spirito \cite{Gennaro}, where the initial vorticity $\omega_0 \in L^p(\T^2)$, $p \in (1, + \infty)$. When the domain has an actual boundary, the zero-viscosity limit is closely related to the validity of the Prandtl equation for the formation of boundary layers. For completeness of the exposition, we mention the  work of Sammartino \& Caflisch \cite{SammCaflisch1}-\cite{SammCaflisch2} and recent results by Maekawa \cite{Maekawa}, Constantin, Kukavica \& Vicol \cite{ConstKukVicol} and G\'erard-Varet, Lacave, Nguyen \& Rousset \cite{GLNR}, with references therein.  The inviscid limit has been also investigated in other physical model for complex fluids, see for instance \cite{CaiLeiLinMasm} for the 2D incompressible viscoelasticity system.

\noindent
Our approach is different and it is based on KAM (Kolmogorov-Arnold-Moser) and Normal Form methods for Partial Differential Equations. This fields started from the Ninetiees, with the pioneering papers of Bourgain \cite{B}, Craig \& Wayne \cite{CW}, Kuksin \cite{K87}, Wayne \cite{Wayne}. We refer to the recent review article \cite{Berti-BUMI-2016} for a complete list of references on this topic. In the last years, new techniques have been developed in order to study periodic and quasi-periodic solutions for PDEs arising from fluid dynamics. For the two dimensional water waves equations, we mention Iooss, Plotnikov \& Toland \cite{IPT} for periodic standing waves,  \cite{Berti-Montalto}, \cite{BBHM} for quasi-periodic standing waves and \cite{BFM1}, \cite{BFM2}, \cite{FeolaGiuliani} for quasi-periodic traveling wave solutions. 

\noindent
We also recall that the challenging problem of constructing quasi-periodic solutions for the three dimensional water waves equations is still open. Partial results have been obtained by Iooss \& Plotnikov, who proved existence of symmetric and asymmetric {\it diamond waves} (bi-periodic waves stationary in a moving frame) in \cite{iooss-plotnikov-1}, \cite{iooss-plotnikov-2}. Very recently, KAM techniques have been successfully applied also for the contour dynamics of vortex patches in active scalar equations.  The existence of time quasi-periodic solutions have been proved in Berti, Hassainia \& Masmoudi \cite{BertiHassMasm} for vortex patches of the Euler equations close to Kirchhoff ellipses, in Hmidi \& Roulley \cite{HmidiRoulley} for the surface quasi-geostrophic (SQG) equations and in Hassainia, Hmidi \& Masmoudi \cite{HassHmidiMasm} for generalized SQG equations. All the aforementioned results concern 2D Euler equations.
The quasi-periodic solutions for the 3D Euler equations with time quasi-periodic external force have been constructed in \cite{BaldiMontalto} and also extended in \cite{Montalto} for the Navier-Stokes equations in arbitrary dimension, \emph{without} dealing with the zero-viscosity limit. The result of the present paper closes also the gap between these two works.

\subsection{Main result.}

We now state precisely our main result. 
We look for time-quasi-periodic solutions of \eqref{Eulero1}, 
oscillating with time frequency $\omega$. 
In particular, we look for solutions which are small perturbations of constant velocity fields $\zeta \in \R^2$, namely solutions of the form 
$$
U (t, x ) = \zeta +  u (\vphi, x)|_{\vphi = \omega t} \quad \text{with} \quad \div u = 0\,,
$$
where the new unknown velocity field $u : \T^d \times \T^2 \to \R^2$ is a function of $(\vphi,x)\in \T^d\times \T^2$.
Plugging this ansatz into the equation, one is led to solve 
\begin{equation}\label{Eulero3}
\begin{cases}
\omega \cdot \partial_\vphi u + \zeta \cdot \nabla u + u \cdot \nabla u - \nu \Delta u + \nabla p 
= \e f(\vphi, x) \\
\div u  = 0\,,
\end{cases}
\end{equation}
with $p : \T^d \times \T^2 \to \R$  and $\omega \cdot \partial_\vphi := \sum_{i = 1}^d \omega_i \partial_{\vphi_i}$. 
According to \cite{BaldiMontalto}, we shall assume that the forcing term $f$ is odd with respect to $(\vphi, x)$, that is
\begin{equation}\label{ipotesi forzante}
	f(\vphi, x) = - f(- \vphi, - x), \quad \forall (\vphi, x) \in \T^d \times \T^2\,.
\end{equation}
It is convenient to work in the well known {\it vorticity formulation}. We define the scalar vorticity $v(\vphi, x)$ as 
\begin{equation*}\label{def vorticita}
v := \nabla \times u :=  \partial_{x_1} u_2 - \partial_{x_2} u_1 \,.
\end{equation*}
Hence, rescaling the variable $v \mapsto \sqrt{\e} v$ and the small parameter $\e \mapsto \e^2$, the equation \eqref{Eulero3} is equivalent to 
\begin{equation}\label{equazione vorticita media nulla}
	\begin{cases}
		\omega \cdot \partial_\vphi v + \zeta \cdot \nabla v  -  \nu \Delta v
		+ \e \Big(  u \cdot \nabla v -   F(\vphi, x) \Big) = 0, 
		\quad F := \nabla \times f, \\
		u = \nabla_\bot \big[(- \Delta)^{- 1} v \big]\,, \quad  \nabla_\bot := (\partial_{x_2}, - \partial_{x_1})\,,
	\end{cases}
\end{equation}
and $(-\Delta)^{-1}$ is the inverse of the Laplacian, namely the Fourier multiplier with symbol
$|\xi|^{-2}$ for $\xi \in \Z^2$, $\xi \neq 0$, 
Since $\int_{\T^2} v(\cdot, x)\, d x$ is a prime integral, we shall restrict to the space of zero average in $x$.  Then, the pressure is recovered, once the velocity field is known, by the formula $p = \Delta^{- 1} \big[ \e {\rm div} f(\omega t, x) - \div ( u \cdot \nabla u ) \big]$.

For any real $s \geq 0$, we consider the Sobolev spaces $	H^s = H^s(\T^{d + 2}) $ of real scalar 
and vector-valued functions of $(\ph,x)$, defined in \eqref{def sobolev},
and the Sobolev space of functions with zero space average, defined by
\begin{equation*}\label{def sobolv H0s}
	H^s_0 := \Big\{ u \in H^s : \int_{\T^{2}} u(\vphi, x)\, d x= 0 \Big\}\,.
\end{equation*}
Furthermore, we introduce the subspaces of $L^2$ of the even and odd functions in $(\vphi,x)$, respectively:
\begin{equation}\label{X even Y odd}
	\begin{aligned}
		&X := \Big\{ v \in L^2(\T^{d + 2}) : v(\vphi, x) =  v(- \vphi, - x) \Big\} \\
		& Y := \Big\{ v \in L^2(\T^{d + 2}) : v(\vphi, x) = - v(- \vphi, - x) \Big\}\,.
	\end{aligned}
\end{equation}

We first state the result concerning the existence of quasi-periodic solutions of the Euler equation $(\nu = 0)$ for most values of the parameters $(\omega,\zeta)$ in a fixed bounded open set $\Omega\subset \R^d\times \R^2$, proved in \cite{BaldiMontalto}. The statement is slightly modified for the purposes of this paper.


\begin{theorem}\label{main theorem 2}
	{\bf (Baldi-Montalto \cite{BaldiMontalto}).}
There exists $\bar S := \bar S(d) > 0$ such that, for any $S \geq \bar S(d)$, there exists $q := q(S) > 0$ such that, for every forcing term $f \in {\mathcal C}^q(\T^d \times \T^2, \R^2)$ satisfying \eqref{ipotesi forzante}, there exists $\e_0 := \e_0(f, S, d) \in (0, 1)$ and $C := C(f, S, d) > 0$ such that, for every $\e \in (0, \e_0)$, the following holds. There exists a ${\mathcal C}^1$ map
$$
\begin{aligned}
& \R^{d + 2} \to H^S_0(\T^{d + 2}) \cap Y , \quad \lambda = (\omega, \zeta) \mapsto v_e(\cdot; \lambda)\,, 
\end{aligned}
$$ and a Borel set $\Omega_\e \subset \Omega$ of asymptotically full Lebesgue measure, i.e. $\lim_{\e \to 0} |\Omega \setminus \Omega_\e| = 0$, such that, for any $\lambda= (\omega, \zeta) \in \Omega_\e$, the function $v_e(\cdot; \lambda)$ is a quasi-periodic solution of the Euler equation
$$
 \omega \cdot \partial_\vphi v_e + \zeta \cdot \nabla v_e 
+ \e \Big(  u_e \cdot \nabla v_e -   F \Big) = 0, \quad u_e = \nabla_\bot (- \Delta)^{- 1} v_e\,. 
$$ Moreover, there exists a  constant $\mathtt a := \mathtt a(d) \in (0, 1)$ such that $\sup_{\lambda \in \R^{d + 2}} \|  v(\cdot; \lambda) \|_S \leq C \e^{\mathtt a}$ and, for any $i = 1, \ldots, d + 2$, $\sup_{\lambda \in \R^{d + 2}} \| \partial_{\lambda_i} v(\cdot; \lambda) \|_S \leq C \e^{\mathtt a}$.  
\end{theorem}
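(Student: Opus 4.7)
The plan is to construct $v_e(\cdot;\lambda)$ by a Nash--Moser iteration for the nonlinear map
\begin{equation*}
\mathcal{F}(v;\lambda) := \omega \cdot \partial_\vphi v + \zeta \cdot \nabla v + \e \bigl( u[v] \cdot \nabla v - F \bigr), \qquad u[v] := \nabla_\bot (-\Delta)^{-1} v,
\end{equation*}
on the closed subspace $H^s_0 \cap Y$. Starting from the trivial guess $v^{(0)} = 0$, the residual $\mathcal{F}(0;\lambda) = -\e F$ is already of size $O(\e)$, so, provided a sufficiently tame approximate right inverse of the linearized operator is available at each iterate, a Newton-like scheme will converge to a solution of size $O(\e^{\mathtt a})$ for some $\mathtt a \in (0,1)$ dictated by the scheme's parameters.

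The core of the argument is the analysis of the linearized operator at an approximate solution $v$,
\begin{equation*}
\mathcal{L}(v) h = \omega \cdot \partial_\vphi h + \bigl( \zeta + \e\, u[v] \bigr) \cdot \nabla h + \e \, u[h] \cdot \nabla v,
\end{equation*}
which is a $\vphi$-dependent transport operator on $\T^{2}$ plus a nonlocal perturbation that is smoothing of order $-1$ (since $u[\cdot]$ gains one derivative). The steps I would take are: (i) treat the nonlocal term $\e\,u[h]\cdot\nabla v$ perturbatively using the $\e$-smallness and the gain in regularity; (ii) conjugate the principal transport part $\omega\cdot\partial_\vphi + (\zeta+\e u[v])\cdot\nabla$ to the constant-coefficient vector field $\omega\cdot\partial_\vphi + \zeta_\infty\cdot\nabla$ by a KAM-type sequence of $\vphi$-dependent diffeomorphisms of $\T^2$, in the spirit of the straightening schemes used in \cite{Berti-Montalto,BBHM}; (iii) invert the resulting constant-coefficient operator in Fourier space under a first Melnikov condition
\begin{equation*}
|\omega \cdot \ell + \zeta_\infty \cdot \xi| \geq \gamma \langle \ell \rangle^{-\tau}, \qquad (\ell,\xi) \in \Z^{d+2}\setminus\{0\};
\end{equation*}
(iv) feed this tame inverse into the Nash--Moser iteration on the scale $\{H^s\}$, excising at each step the parameters $\lambda$ that violate the required Melnikov conditions and obtaining in the limit the Cantor-like set $\Omega_\e$.

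Preserving the parity class $Y$ throughout is automatic: the subspace $H^s_0 \cap Y$ is invariant under $\mathcal{F}(\cdot;\lambda)$ thanks to the oddness assumption \eqref{ipotesi forzante}, and the zero $x$-average constraint survives because $\mathcal{F}(v;\lambda)$ has zero $x$-average when $v$ does. Oddness also forces the time average of any solution to vanish, which eliminates one layer of otherwise trivial resonances at $\ell=0$. The diffeomorphisms constructed in step (ii) must be chosen to preserve this parity so that the conjugated linearized problem stays in the correct symmetry class.

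The main obstacle is step (ii): the straightening of the perturbed vector field $\zeta + \e u[v]$ on $\T^2$ is itself a small-divisor problem, nested inside the outer Newton loop, and the diffeomorphisms must be estimated in tame Sobolev norms so as not to spoil the quadratic convergence of Nash--Moser. This nested small-divisor structure is precisely what fixes both the regularity threshold $\bar S(d)$ on the solution and the regularity $q = q(S)$ required of the forcing. Once the scheme converges on $\Omega_\e$, R\"ussmann-type counting arguments for the frequencies excised at every stage give $|\Omega \setminus \Omega_\e| \to 0$ as $\e \to 0$, while $\mathcal{C}^1$-smoothness of $\lambda\mapsto v_e(\cdot;\lambda)$ together with the bounds $\|v_e\|_S,\ \|\partial_{\lambda_i} v_e\|_S \lesssim \e^{\mathtt a}$ is obtained by differentiating the iteration step by step and tracking the $\lambda$-Lipschitz dependence of the small-divisor estimates.
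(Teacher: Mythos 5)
This statement is not proved in the paper at all: it is Theorem \ref{main theorem 2}, imported verbatim (up to minor reformulation) from Baldi--Montalto \cite{BaldiMontalto}, so there is no internal proof to compare your proposal against. Judged against the strategy of \cite{BaldiMontalto} (which the present paper partially re-implements in Sections \ref{sez riducibilita}--\ref{sez inversioni} when it inverts $\mathcal{L}_e$), your outline captures the correct architecture — Nash--Moser on $H^s_0\cap Y$, straightening of the transport operator, Melnikov conditions, measure estimates — but one step would fail as written.

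The gap is in your steps (i) and (iii). After straightening, the linearized operator is $\omega\cdot\partial_\vphi+\zeta\cdot\nabla+\e\,\mathcal{R}^{(1)}$ with $\mathcal{R}^{(1)}$ of order $-1$ (cf. \eqref{cal L (1)}). You propose to invert the constant-coefficient part under a first Melnikov condition and treat the nonlocal term ``perturbatively using the $\e$-smallness and the gain in regularity.'' But the inverse of $\omega\cdot\partial_\vphi+\zeta\cdot\nabla$ loses $\tau$ derivatives from the small divisors, while $\mathcal{R}^{(1)}$ gains only one, so $(\omega\cdot\partial_\vphi+\zeta\cdot\nabla)^{-1}\e\,\mathcal{R}^{(1)}$ is an unbounded operator of order $\tau-1>0$ and no Neumann series closes, however small $\e$ is. What is actually needed — and what both \cite{BaldiMontalto} and the present paper do — is a further regularization of the lower-order terms to arbitrarily smoothing remainders (Proposition \ref{proposizione regolarizzazione ordini bassi}) followed by a KAM reducibility scheme requiring \emph{second} Melnikov conditions on the differences $\ii\omega\cdot\ell+\mu(j)-\mu(j')$ (Proposition \ref{prop riducibilita}, set $\Lambda_\infty^\gamma$ in \eqref{cantor finale ridu}); only after full diagonalization does the first Melnikov condition (the set $\Gamma_\infty^\gamma$ in \eqref{prime di melnikov}, which must involve the corrected eigenvalues $\mu_\infty(j)=\ii(\zeta\cdot j+r_j^\infty)$, not the unperturbed $\zeta\cdot\xi$) yield the tame inverse of Proposition \ref{inversione linearized no viscosity}. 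A minor further inaccuracy: since $a=\nabla_\bot(-\Delta)^{-1}v_e$ is divergence-free with zero average, the straightening does not shift the frequency, i.e.\ the conjugated vector field is $\omega\cdot\partial_\vphi+\zeta\cdot\nabla$ with the original $\zeta$ and there is no $\zeta_\infty$.
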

We now are ready to state the main result of this paper. Rougly speaking, we will prove that for any value of the viscosity parameter $\nu > 0$ and for $\e \ll 1$ small enough, \emph{independent of the viscosity parameter}, the Navier-Stokes equation \eqref{equazione vorticita media nulla} admits a quasi-periodic solution $v_\nu (\vphi, x)$ for most of the parameters $\lambda = (\omega, \zeta)$ such that $\| v_\nu - v_e \|_S = O(\nu)$. This implies that $v_\nu(\omega t, x)$ converges strongly to $v_e(\omega t, x)$, uniformly in $(t, x) \in \R \times \T^2$, with a rate of convergence $\sup_{(t, x) \in \R \times \T^2}|v_\nu(\omega t, x) - v_e(\omega t, x)| \lesssim \nu$. To the best of our knowledge, this is the first case in which the inviscid limit is uniform in time. We now give the precise statement of our main theorem. 
\begin{theorem}\label{teorema limite singolare}
{\bf (Singular KAM for 2D Navier-Stokes in the inviscid limit).}
There exist $\bar s := \bar s(d)$ and $ \overline \mu := \overline \mu(d) > 0$ such that, for any $s \geq \bar s(d)$, there exists $q := q(s) > 0$ such that, for every forcing term $f \in {\mathcal C}^q(\T^d \times \T^2, \R^2)$ satisfying \eqref{ipotesi forzante}, there exists $\e_0 := \e_0(f, s, d) \in (0, 1)$ and $C := C(f, s, d) > 0$ such that, for every $\e \in (0, \e_0)$ and  for any value of the viscosity parameter $\nu > 0$, the following holds. Let $v_e(\cdot; \lambda) \in H^{s + \overline \mu}_0(\T^{d + 2}) \cap Y$, $\lambda \in \Omega_\e$ be the family of solutions of the Euler equation provided by Theorem \ref{main theorem 2}. Then, there exists a Borel set ${\mathcal O}_\e \subseteq \Omega_\e$, satisfying $\lim_{\e \to 0} |{\mathcal O}_\e| = |\Omega|$ such that, for any $\lambda = (\omega, \zeta) \in {\mathcal O}_\e$, there exists a unique quasi-periodic solution $v_\nu(\cdot; \lambda) \in H^s_0(\T^{d + 2})$, $\lambda \in {\mathcal O}_\e$, of the Navier Stokes equation
$$
\omega \cdot \partial_\vphi v_\nu + \zeta \cdot \nabla v_\nu 
  - \nu \Delta v_\nu + \e\Big(u_\nu \cdot \nabla v_\nu -  F(\vphi, x) \Big) =0\,, \quad 
u_\nu = \nabla_\bot [(- \Delta)^{- 1} v_\nu \big]\,,
$$
satisfying the estimate 
$$
\sup_{\lambda \in {\mathcal O}_\e}\| v_\nu(\cdot; \lambda) - v_e(\cdot; \lambda) \|_s \lesssim_s \nu \,. 
$$
As a consequence, for any value of the parameter $\lambda \in {\mathcal O}_\e$, the quasi-periodic solutions of the Navier Stokes equation $v_\nu$ converge to the ones of the Euler equation $v_e$ in $H^s_0(\T^{d + 2})$ in the limit $\nu\to 0$. 
\end{theorem}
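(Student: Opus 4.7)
The plan is to follow the scheme announced in the abstract: write $v_\nu = v_e + w$, derive the equation for the perturbation $w$, construct an approximate solution with residual of size $O(\nu^2)$, and close by a fixed-point argument exploiting a uniform-in-$\nu$ resolvent estimate for the linearized Navier-Stokes operator. Substituting $v_\nu = v_e + w$ in \eqref{equazione vorticita media nulla} and using the Euler equation for $v_e$, the unknown $w$ must satisfy
$$
L_N(w) + \e\, u_w \cdot \nabla w = \nu \Delta v_e, \qquad u_w := \nabla_\bot (-\Delta)^{-1} w,
$$
where $L_N := L_E - \nu \Delta$ is the Navier-Stokes linearization at $v_e$ and
$$
L_E w := \omega \cdot \partial_\vphi w + \zeta \cdot \nabla w + \e\bigl(u_e \cdot \nabla w + u_w \cdot \nabla v_e\bigr)
$$
is the corresponding Euler linearization. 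Since $v_e \in H^{s + \bar\mu}$, the right-hand side is of size $O(\nu)$ in $H^s$.

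First I would build an approximate solution $v_{\rm app} := v_e + \nu w_1$, where $w_1$ is obtained by solving the \emph{inviscid} linearized equation $L_E(w_1) = \Delta v_e$. The operator $L_E$ is invertible on $\Omega_\e$ by the KAM and reducibility analysis of \cite{BaldiMontalto}. A direct Taylor expansion then gives
$$
N(v_{\rm app}) = \nu^2 \bigl( -\Delta w_1 + \e\, u_{w_1} \cdot \nabla w_1 \bigr) = O(\nu^2),
$$
the desired quadratic smallness of the residual.

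The main and most delicate step is to invert the linearized Navier-Stokes operator $L_N(v_{\rm app}) = L_E(v_{\rm app}) - \nu \Delta$ with norm bounded \emph{independently of the singular parameter $\nu$}. My strategy is to conjugate by the near-identity pseudo-differential maps $\Phi$ of \cite{BaldiMontalto} which reduce $L_E(v_{\rm app})$ to a (block-)diagonal operator $\mathcal{D}$ in Fourier, whose eigenvalues $\ii(\omega \cdot \ell + \mu_j)$ satisfy Melnikov-type bounds $|\omega \cdot \ell + \mu_j - \mu_{j'}| \geq \gamma \langle \ell\rangle^{-\tau}$ on a parameter subset $\mathcal{O}_\e \subseteq \Omega_\e$ of asymptotically full measure. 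The same conjugation then yields
$$
\Phi^{-1} L_N(v_{\rm app}) \Phi = \mathcal{D} - \nu \Delta + \nu \mathcal{R},
$$
where $\mathcal{R}$ encodes the commutator $\Phi^{-1}[\Delta, \Phi]$ arising from the non-commutativity of $\Phi$ and the Laplacian. The diagonal part has eigenvalues $\ii(\omega\cdot\ell + \mu_j) + \nu |\xi_j|^2$ whose modulus is bounded below by $|\omega \cdot \ell + \mu_j| \geq \gamma \langle \ell\rangle^{-\tau}$ uniformly in $\nu$, since the dissipation only adds a non-negative real part to otherwise purely imaginary eigenvalues. Provided the remainder $\nu \mathcal{R}$ can be absorbed, either by Neumann series using the KAM smallness of $\Phi - I$ or by an additional reducibility step, this yields an inverse for $L_N(v_{\rm app})$ of norm independent of $\nu$.

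With this inversion in hand, writing $v_\nu = v_{\rm app} + h$ turns $N(v_{\rm app} + h) = 0$ into the fixed-point equation $h = L_N(v_{\rm app})^{-1}\bigl(-N(v_{\rm app}) - \e\, u_h \cdot \nabla h\bigr)$, which contracts on a ball of radius $O(\nu^2)$ in $H^s$ thanks to the quadratic residual and the $\nu$-uniform resolvent bound; the estimate $\|v_\nu - v_e\|_s \lesssim \nu$ then follows. The main obstacle is precisely the inversion step above: the remainders generated by conjugating $-\nu \Delta$ through the Baldi-Montalto scheme a priori involve operators of the same order as the dissipation itself, and the heart of the matter is to show that they are actually of size $\nu \cdot o_\e(1)$ -- genuinely perturbative -- so that the singular viscosity acts as a regularizing, rather than destabilizing, contribution in the KAM iteration.
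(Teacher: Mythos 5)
Your overall architecture (approximate solution $v_{\rm app}=v_e+\nu w_1$ with $L_E w_1=\Delta v_e$, residual $O(\nu^2)$, then a fixed point) matches the paper's, and you correctly identify that the heart of the matter is the inversion of the conjugated operator $\mathcal{D}-\nu\Delta+\nu\mathcal{R}$ where $\mathcal{R}$ is of the same order as the dissipation. But your proposed mechanism for that inversion does not work, and the step you leave as ``provided the remainder $\nu\mathcal{R}$ can be absorbed'' is precisely where the proof lives. You bound the eigenvalues from below by the Melnikov quantity $|\omega\cdot\ell+\mu_j|\gtrsim \gamma\langle\ell\rangle^{-\tau}|j|^{-\tau}$ and conclude a $\nu$-independent inverse. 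That lower bound is true, but the resulting inverse of the diagonal part \emph{loses $\tau$ derivatives} (small divisors). Since $\mathcal{R}$ is an unbounded operator of order two (it comes from $\Phi^{-1}[\Delta,\Phi]$), the composition $(\mathcal{D}-\nu\Delta)^{-1}\nu\mathcal{R}$ maps $H^{s+\tau+2}\to H^s$ and is not a bounded operator on any fixed $H^s$: the Neumann series loses derivatives at every step and does not converge, no matter how small $\nu\mathcal{R}$ is in size. The same loss of derivatives would then kill your fixed point, because the quadratic term $\e\,u_h\cdot\nabla h$ also costs one derivative.

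The paper's resolution is different and is the key idea of the whole argument: since the reduced Euler eigenvalues are purely imaginary (reversibility), one bounds $|\ii(\omega\cdot\ell+\mu_j)+\nu|j|^2|\geq\nu|j|^2$ by the \emph{real} part, i.e.\ by the dissipation itself. This gives an inverse $(\mathcal{D}_\infty-\nu\Delta)^{-1}$ of size $O(\nu^{-1})$ — \emph{not} uniform in $\nu$ — but which \emph{gains two space derivatives} with no small divisors. The gain of two derivatives exactly compensates the order two of $\mathcal{R}_{\infty,\nu}$, and the factor $\nu^{-1}$ is compensated by the size $O(\e\nu)$ of $\mathcal{R}_{\infty,\nu}$, so the Neumann series converges on a fixed $H^s_0$ under a smallness condition on $\e$ alone. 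One then gets $\|\mathcal{L}_\nu^{-1}(-\Delta)\|_{\mathcal{B}(H^s_0)}\lesssim\nu^{-1}$, and the fixed point is run on the ball $\|\psi\|_s\leq\nu$ (not $O(\nu^2)$): the $O(\nu^2)$ residual times the $O(\nu^{-1})$ inverse gives $O(\nu)$, while the derivative loss in $u_h\cdot\nabla h$ is absorbed by pairing $\mathcal{L}_\nu^{-1}(-\Delta)$ with $(-\Delta)^{-1}\mathcal{N}(\cdot,\cdot)$, which is bounded without loss. Until you replace your Melnikov-based inversion with this two-derivative-smoothing, $O(\nu^{-1})$ inversion (or an equivalent device), the proposal has a genuine gap at its central step.
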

From the latter theorem we shall deduce the following corollary which provides a family of quasi-periodic solutions of Navier-Stokes equation converging to solutions of the Euler equation with rate of convergence $O(\nu)$ and uniformly for all times. The result is a direct consequence of the Sobolev embeddings.
\begin{corollary}\label{corollario bla bla}
{\bf (Uniform rate of convergence for the inviscid limit).}
Assume the same hypotheses of Theorem \ref{teorema limite singolare} and let $s \geq s_0$ large enough, $v_e \in H^{s + \bar\mu}_0(\T^{d + 2})$, $v_\nu \in H^s_0(\T^{d + 2})$ and, for $\omega \in {\mathcal O}_\e$, let $v_{\nu}^\omega(t, x) := v_\nu(\omega t, x)$, $v_{ e}^\omega(t, x) := v_e(\omega t, x)$ be defined for any $(t, x) \in \R \times \T^2$. Then, for any $\alpha \in \N$, $\beta \in \N^2$ with $|\alpha| + |\beta| \leq s - (\lfloor\frac{d+2}{2}\rfloor +1)$, one has 
$$
 \|\partial_t^\alpha \partial_x^\beta (v_{ \nu}^\omega - v_{ e}^\omega) \|_{L^\infty(\R \times \T^2)} \lesssim_{\alpha, \beta} \nu\,. 
$$

\end{corollary}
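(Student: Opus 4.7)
The plan is to reduce everything to the Sobolev bound $\|v_\nu - v_e\|_s \lesssim_s \nu$ provided by Theorem \ref{teorema limite singolare}, via the chain rule for the substitution $\vphi = \omega t$ together with the Sobolev embedding $H^m(\T^{d+2}) \hookrightarrow L^\infty(\T^{d+2})$ for any integer $m > (d+2)/2$.

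First, I would set $m := \lfloor (d+2)/2\rfloor + 1$, fix $\alpha \in \N$, $\beta \in \N^2$ with $|\alpha| + |\beta| \leq s - m$, and note that for any sufficiently smooth $w : \T^{d+2} \to \R$ the function $w^\omega(t,x) := w(\omega t, x)$ satisfies
\begin{equation*}
\partial_t^\alpha \partial_x^\beta w^\omega(t,x) = \big[(\omega \cdot \partial_\vphi)^\alpha \partial_x^\beta w\big](\omega t, x), \qquad (t,x) \in \R \times \T^2,
\end{equation*}
so that
\begin{equation*}
\big\|\partial_t^\alpha \partial_x^\beta w^\omega\big\|_{L^\infty(\R \times \T^2)} \leq \big\|(\omega \cdot \partial_\vphi)^\alpha \partial_x^\beta w\big\|_{L^\infty(\T^{d+2})}.
\end{equation*}
Applying this to $w = v_\nu(\,\cdot\,;\lambda) - v_e(\,\cdot\,;\lambda)$ and using Sobolev embedding with the parameter $m$, I would obtain
\begin{equation*}
\big\|\partial_t^\alpha \partial_x^\beta (v_\nu^\omega - v_e^\omega)\big\|_{L^\infty(\R \times \T^2)} \lesssim_m \big\|(\omega \cdot \partial_\vphi)^\alpha \partial_x^\beta (v_\nu - v_e)\big\|_{H^m(\T^{d+2})}.
\end{equation*}

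Next, since the frequency $\omega$ varies in a bounded set $\Omega \subset \R^d \times \R^2$, expanding $(\omega\cdot \partial_\vphi)^\alpha$ as a sum of mixed derivatives in $\vphi$ yields
\begin{equation*}
\big\|(\omega \cdot \partial_\vphi)^\alpha \partial_x^\beta (v_\nu - v_e)\big\|_{H^m} \lesssim_{\alpha,\beta} (1 + |\omega|)^{|\alpha|}\, \| v_\nu - v_e \|_{H^{m + |\alpha| + |\beta|}} \lesssim_{\alpha,\beta} \| v_\nu - v_e \|_s,
\end{equation*}
using our assumption $|\alpha| + |\beta| \leq s - m$. Chaining the two inequalities and invoking the bound $\|v_\nu - v_e\|_s \lesssim_s \nu$ from Theorem \ref{teorema limite singolare} then gives the claimed estimate. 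There is no real obstacle here: the corollary is essentially an unwinding of the Sobolev norm on the torus $\T^{d+2}$ into a uniform-in-time bound on the pullback to $\R \times \T^2$, and the whole analytic content sits in Theorem \ref{teorema limite singolare}.
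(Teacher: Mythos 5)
Your proposal is correct and follows essentially the same route as the paper: the substitution $\vphi = \omega t$ reduces the $L^\infty(\R\times\T^2)$ bound on derivatives to an $L^\infty(\T^{d+2})$ bound, which is then controlled by $\|v_\nu - v_e\|_s$ via the Sobolev embedding (the paper phrases this as $H^s \hookrightarrow W^{\sigma,\infty}$ with $\sigma \leq s - (\lfloor\frac{d+2}{2}\rfloor+1)$, which is exactly your derivative-by-derivative argument with $m = \lfloor\frac{d+2}{2}\rfloor+1$). The only cosmetic difference is that you track the factor $(1+|\omega|)^{|\alpha|}$ explicitly, which the paper absorbs into the implicit constant using the boundedness of $\Omega$.
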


 Let us make some remarks on the result.
\\[1mm]
1) {\it Vanishing viscosity solutions of the Cauchy problem.} The time quasi-periodic solutions in Theorem \ref{teorema limite singolare} are slight perturbations of constant velocity fields $\zeta \in \R^2$ with frequency vector $\omega\in\R^d$ induced by the perturbative forcing term $f(\omega t,x)$. Since they exist only for most values of the parameters $(\omega,\zeta)$, we obtain equivalently that the Cauchy problem associated with \eqref{equazione vorticita media nulla} (and so of \eqref{Eulero1}) admits a subset of small amplitude initial data of relatively large measure, with elements evolving \emph{for all time}, in a eventually larger but still bounded neighbourhood in the Sobolev topology, and whose flows exhibit a uniform vanishing viscosity limit to solutions of the Cauchy problem for the Euler equations with same initial data.
\\[1mm]
2) {\it The role of the forcing term.} It is worth to note that the time quasi-periodic external forcing term $F(\omega t,x)$ in \eqref{Eulero1} is independent of the viscosity parameter $\nu>0$. Its presence ensures the existence of the time quasi-periodic Euler solution $v_e$ in Theorem \ref{main theorem 2}, while the construction of the viscous correction $v_{\nu}-v_{e}$ does not rely explicitly on it: if one is able to exhibits time quasi-periodic solutions close to constant velocity fields for the free 2D Euler equation, namely \eqref{equazione vorticita media nulla} with $\nu=0$ and $F\equiv 0$, then the ones for the Navier-Stokes equation follow immediately by our scheme. To our knowledge, the only result of existence of time quasi-periodic flows for the free Euler equations on $\T^2$ is given by Crouseilles \& Faou \cite{Faou}, where the solutions are searched based on a prescribed stationary shear flow, locally constant around finitely many points, and propagate in time in the orthogonal direction to the shear flow. Due to the nature of their solutions, the non-resonant frequencies are prescribed as well and therefore there are no small divisors issues involved.

\subsection{Strategy and main ideas of the proof}
In order to prove Theorem \ref{teorema limite singolare}, we have to construct a solution of the Navier-Stokes equation \eqref{equazione vorticita media nulla} which is a correction of order $O(\nu)$ of the solution $v_e$ of the Euler equation (provided in Theorem \ref{main theorem 2} of \cite{BaldiMontalto}). Roughly speaking, the difficult point is the following. There are two smallness parameters which are $\e$, the size of the Euler solution, and $\nu$, the size of the viscosity. If one tries to construct small solutions of the Navier-Stokes equation by using a standard fixed point argument, one immediately notes that a smallness condition of the form $\e \nu^{- 1} \ll 1$ is needed and clearly this is not enough to pass to the inviscid limit as $\nu \to 0$. The key point is to have a smallness condition on $\e$ which is independent of $\nu$ in such a way that one can pass to the limit as the viscosity $\nu \to 0$. We can summarize the construction into three main steps:
\begin{enumerate}
	\item Analysis of the linearized Navier-Stokes equation at the Euler solution and estimates for the inverse operators;
	\item Construction of the first order approximation for the viscous solution up to errors of order $O(\nu^2)$;
	\item A fixed point argument around the approximated viscous solution leading to the desired full solution of the Navier-Stokes equation.
\end{enumerate}

\noindent
{\bf Inversion of the linearized operator at the Euler solution.}
The essential ingredient is to analyse the linearized Navier-Stokes operator at the Euler solution $u_e$, namely one has to linearize \eqref{equazione vorticita media nulla} at the Euler solution $u_e(\vphi, x)$.  This leads to study a linear operator of the form 
\begin{equation}\label{linearizzato intro}
\begin{aligned}
{\mathcal L}_\nu & : = {\mathcal L}_e - \nu \Delta, \quad \,, \\
{\mathcal L}_e & :=  \omega \cdot \partial_\vphi + \big( \zeta + \e a(\vphi, x)\big) \cdot \nabla + \e {\mathcal R}\,, \quad a(\vphi, x)  := u_e(\vphi, x)\,, \\
 & \text{where} \quad {\mathcal R} : h(\vphi, x) \mapsto \nabla_\bot (- \Delta)^{- 1} h(\vphi, x) \cdot \nabla u_e(\vphi, x)
\end{aligned}
\end{equation}
is a pseudo-differential operator of order $- 1$. Note that the linear operator ${\mathcal L}_e$ is obtained by linearizing the Euler equation at the solution  with vorticity $v_e(\vphi, x)$. If one tries to implement a naive approach by directly using Neumann series to invert the linear operator ${\mathcal L}_\nu$, 
one has to require that $\e \nu^{- 1} \ll 1$, which is not enough to pass to the limit as $\nu \to 0$. To overcome this issue, we first implement the normal form procedure developed in \cite{BLM}, \cite{BaldiMontalto} to reduce to a diagonal, constant coefficients operator the Euler operator $\mL_{e}$, generating an unbounded correction to the viscous term $-\nu\Delta$ of size $O(\e\nu)$. More precisely, for most values of the parameters $(\omega, \zeta)$ and for $\e \ll 1$ small enough and \emph{independent of $\nu$}, we construct a bounded, invertible transformation $\Phi : H^s_0 \to H^s_0$  close to the identity such that 
\begin{equation}\label{op forma normale intro}
{\mathcal L}_{\infty, \nu} := \Phi^{- 1} {\mathcal L}_\nu \Phi = {\mathcal D}_\infty - \nu \Delta + {\mathcal R}_{\infty, \nu}
\end{equation}
where ${\mathcal D}_\infty$ and ${\mathcal R}_{\infty, \nu}$ have the following properties. ${\mathcal D}_\infty$ is a diagonal operator of the form 
\begin{equation}\label{operatore diagonale intro}
\begin{aligned}
&{\mathcal D}_\infty   := {\rm diag}_{(\ell, j) \in \Z^d \times (\Z^2 \setminus \{ 0 \})}\, \mu_\infty(\ell, j)\,, \\
&\mu_\infty(\ell, j)  :=  \ii (\omega \cdot \ell + \zeta \cdot j + r_j^\infty)\,, \quad \text{with }\quad |r_j^\infty|  \lesssim \e |j|^{- 1} \quad \forall \, j \in \Z^2 \setminus \{ 0 \}\,.
\end{aligned} 
\end{equation}
The remainder term ${\mathcal R}_{\infty, \nu}$ is an unbounded operator of order two and it satisfies an estimate of the form 
\begin{equation}\label{stima resto forma normale intro}
\| (- \Delta)^{- 1} {\mathcal R}_{\infty, \nu}\|_{{\mathcal B}(H^s)} \lesssim_s \e \nu
\end{equation}
where we denote by ${\mathcal B}(H^s)$, the space of bounded linear operators on $H^s$. The estimate \eqref{stima resto forma normale intro} is the key ingredient to invert the operator ${\mathcal L}_{\infty, \nu}$ in \eqref{op forma normale intro} with a smallness condition on $\e$  uniform with respect to the viscosity parameter $\nu > 0$. It is also crucial to exploit the reversibility structure which is a consequence of the fact that the solutions $v_e(\vphi, x)$ of the Euler equation are odd with respect to $(\vphi, x)$. This ensures that, for any $\ell \in \Z^d$, $j \in \Z^2 \setminus \{ 0 \}$ , the eigenvalues $\mu_\infty(\ell, j)$ of the diagonal operator ${\mathcal D}_\infty$ in \eqref{operatore diagonale intro} are purely imaginary (namely, the corrections $r_j^\infty$ are real). An important consequence is that the diagonal operator ${\mathcal D}_\infty - \nu \Delta$ is invertible and {\it gains two space derivatives} with an estimate for its inverse of order $O(\nu^{- 1})$. Indeed, the eingenvalues of ${\mathcal D}_\infty - \nu \Delta$ are $\ii (\omega \cdot \ell + \zeta \cdot j + r_j^\infty) + \nu |j|^2$, with $ (\ell, j) \in \Z^d \times (\Z^2 \setminus \{ 0 \})$,
and, since $\omega \cdot \ell + \zeta \cdot j + r_j^\infty$ is real, one gets a lower bound 
\begin{equation}\label{bound chiave autovalori intro}
|\ii (\omega \cdot \ell + \zeta \cdot j + r_j^\infty) + \nu |j|^2| \geq \nu |j|^2,
\end{equation} implying that ${\mathcal D}_\infty - \nu \Delta$ is invertible with inverse which gain two space derivatives, namely $$\| ({\mathcal D}_\infty - \nu \Delta)^{- 1}  (- \Delta)\|_{{\mathcal B}(H^s)} \lesssim \nu^{- 1}\,.$$ Thus, on one hand, $({\mathcal D}_\infty - \nu \Delta)^{- 1}$ gains two space derivatives, compensating the loss of two space derivatives of the remainder ${\mathcal R}_{\infty, \nu}$. On the other hand, the norm of $({\mathcal D}_\infty - \nu \Delta)^{- 1}$ explodes as $\nu^{- 1}$ as $\nu \to 0$, but this is compensated by the fact that ${\mathcal R}_{\infty, \nu}$ is of order $O(\e \nu)$. Therefore, recalling \eqref{stima resto forma normale intro}, one gets a bound 
$$
\| ({\mathcal D}_\infty - \nu \Delta)^{- 1} {\mathcal R}_{\infty, \nu}\|_{{\mathcal B}(H^s)} \lesssim \nu^{- 1} (\e \nu) \lesssim \e\,.
$$
Hence, by Neumann series, for $\e \ll 1$ small enough and independent of $\nu$, the operator ${\mathcal L}_{\infty, \nu}$ is invertible and gains two space derivatives, with estimate $\|{\mathcal L}_{\infty, \nu}^{- 1} (- \Delta) \|_{{\mathcal B}(H^s)} \lesssim_s \nu^{- 1}$. By \eqref{op forma normale intro}, we deduce that ${\mathcal L}_\nu$ is invertible as well and satisfies, for $\e\ll 1$ and for any $\nu > 0$,
\begin{equation}\label{stima inverso eul intro}
\| {\mathcal L}_\nu^{- 1} (- \Delta) \|_{{\mathcal B}(H^s)} \lesssim_s \nu^{- 1}\,.
\end{equation}

\noindent
{\bf First order approximation for the viscosity quasi-periodic solution and fixed point argument.} Once we have a good knowledge for properly inverting the operators $\mL_{\nu}$ and $\mL_{\e}$, we are ready to construct quasi-periodic solutions of the Navier Stokes equation converging to the Euler solution $u_e$ as $\nu \to 0$. First, we define an approximate solution $v_{app} = v_e + \nu v_1$ which solves the equation \eqref{equazione vorticita media nulla} up to order $O(\nu^2)$. By making a formal expansion with respect to the viscosity parameter $\nu$, we ask $v_{e}$ to solve the equation at the zeroth order $O(\nu^0)$, namely the Euler equation, whose existence is provided by Theorem \ref{main theorem 2}, and $v_1$ to solve the linear equation at the first order $O(\nu)$, that is $\mL_{e} v_1 = \Delta v_{e}$. This procedure leads to a loss of regularity due to the presence of \emph{ small divisors}, appearing in the inversion of the linearized Euler operator ${\mathcal L}_e$ in \eqref{linearizzato intro}, which satisfies an estimate of the form $\| {\mathcal L}_e^{- 1} h \|_s \lesssim_s \| h \|_{s + \tau}$ for some $\tau \gg 0$ large enough. On the other hand, this is not a problem in our scheme since it appears only twice: first, in the construction of the quasi-periodic solution $v_e$, but it has already been dealt in Theorem \ref{main theorem 2}; second, in the definition indeed of $v_1$. We overcome this issue  by requiring  $v_e$ to be sufficiently regular. The final step to prove Theorem \ref{teorema limite singolare} is to implement a fixed point argument for constructing solutions of the form $v = v_e + \nu v_1 + \psi$, where the quasi-periodic correction $\psi$ lies in the ball $\| \psi \|_s \leq \nu$. It is crucial  here that $v_e + \nu v_1$ is an approximate solution up to order $O(\nu^2)$: indeed, the fixed point iteration asks to invert linearized operator at the Euler solution $\mL_{\nu}$, which has a bound  of order $O(\nu^{- 1})$ (recall \eqref{stima inverso eul intro}), and in this way the new term ends up to be of order $O(\nu)$ as desired. The good news here is that, at this stage, no small divisors are involved and, consequently, no losses of derivatives, which would have made the fixed point argument not applicable otherwise.


\noindent
 {\it 2D vs. 3D.} It is worth to conclude this introduction by making some comments on the 3D case, that it is not covered by the method developed in this paper. 
 In the present paper, we construct global in time quasi-periodic solutions for the \emph{two dimensional} Navier-Stokes equations converging uniformly in time to global quasi-periodic solutions of the two-dimensional forced Euler equation. The three dimensional is much harder. The biggest obstacle is that the reversible structure is not enough to deduce that the spectrum of the linearized Euler operator after the KAM reducibility scheme is purely imaginary. Indeed, as in \cite{BaldiMontalto}, the reduced Euler operator ${\cal D}_\infty$ is a $3 \times 3$ block diagonal operator ${\cal D}_\infty = {\rm diag}_{j \in \Z^3 \setminus \{ 0 \}} D_\infty(j)$ where the $3 \times 3$ matrix $D_\infty(j)$ has the form $D_\infty(j) = \ii \zeta \cdot j {\rm Id} + \e R_\infty(j)$ for $j \in \Z^3 \setminus \{ 0 \}$. This block matrix could have eigenvalues $\mu_1(j), \mu_2(j), \mu_3(j)$ of the form $\mu_i(j) = \ii \zeta \cdot j + \e r_i(j)$, $i = 1,2,3$, with real part different from zero, in particular with ${\rm Re}(r_i(j)) \neq 0$ for some $i = 1,2,3$. This seems to be an obstruction to get a lower bound like \eqref{bound chiave autovalori intro} with a gain of two space derivatives, which holds uniformly in $\e$ and for any value of the viscosity parameter. More precisely, one gets a lower bound on the eigenvalues of the form
 $$
 |\ii \omega \cdot \ell + \mu_i(j) + \nu |j|^2| \geq  |\e {\rm Re}(r_i(j)) + \nu |j|^2 |\,. 
 $$
 It is therefore not clear how to bound the latter quantity by $C \nu |j|^2$ without linking $\e$ and $\nu$ which prevent to pass to the limit as $\nu \to 0$ (independently of $\e$). 
\\[1mm]

\noindent
{\bf Outline of the paper.} The rest of this paper is organized as follows. In Section \ref{sez generale norme e operatori} we introduce the functional setting and some general lemmata that we will employ in the other sections.  In Section \ref{sez generale L} we formulate the nonlinear functional $\mF_{\nu}$ in \eqref{equazione cal F vorticita}, whose zeroes correspond to quasi-periodic solutions of the equation \eqref{equazione vorticita media nulla}, together with the linearized operators that we have to study. In Section \ref{sez riducibilita} we implement the normal form method on of the linearized Euler and Navier Stokes operators operator $\mL_{\e}, \mL_\nu$ in \eqref{linearizzato intro}: first, we regularize to constant coefficients the highest and lower orders, in Sections \ref{sezione riduzione ordine alto} and \ref{sez riduzione ordini bassi} up to sufficiently smoothing orders; then, in Sections \ref{sezione riducibilita a blocchi}-\ref{sez convergenza KAM} we prove the full KAM reducibility scheme. We shall prove that the normal form transformations conjugate the linearized Navier Stokes operator to a diagonal one plus a remainder which is unbounded of order two and has size $O(\e \nu)$. This normal form procedure is uniform w.r. to the viscosity parameter since it requires a smallness condition on $\e$ which is independent of the viscosity $\nu > 0$. Then, in Section \ref{sez inversioni} we show the invertibility of the operator $\mL_{\nu}$ (and also $\mL_e$) that will be used in Section \ref{sez soluzione approx} for the construction of the first order approximate solution and in Section \ref{sez fixed point} for the fixed point argument. Finally, the proof of Theorem \ref{teorema limite singolare} is provided in Section \ref{sezione teoremi principali}, together with the measure estimates proved in Section \ref{sezione stime di misura}.

\noindent

\medskip

\bigskip

{\sc Acknowledgements.} The authors warmly thank Gennaro Ciampa, Nader Masmoudi and Eugene Wayne for many stimulating discussions on the topic and on related results. The work of the author R. M. is supported by the ERC STARTING GRANT "Hamiltonian Dynamics, Normal Forms and Water Waves" (HamDyWWa), project number: 101039762. R. M. is also supported by INDAM-GNFM. The work of the author 
L.F. is supported by Tamkeen under the NYU Abu Dhabi Research Institute grant CG002.

\section{Norms and linear operators}\label{sez generale norme e operatori}
In this section we collect some general definitions and properties concerning norms and matrix representation of operators which are used in the whole paper. 

\medskip

\noindent
{\bf Notations.} In the whole paper, the notation $ A \lesssim_{s, m} B $ means
that $A \leq C(s, m) B$ for some constant $C(s, m) > 0$ depending on 
the Sobolev index $ s $  and a generic constant $ m $. We always omit to write the dependence on $d$, which is the number of frequencies, and $\tau$, which is the constant appearing in the non-resonance conditions (see for instance \eqref{insiemi di cantor rid}). We often write $u = {\rm even}(\vphi, x)$ if $u \in X$ and $u = {\rm odd}(\vphi, x)$ if $u \in Y$ (recall \eqref{X even Y odd}).  For a given Banach space $Z$, we recall that $\mB(Z)$ denotes the space of bounded operators from $Z$ into itself.
\subsection{Function spaces}
\label{subsec:function spaces}
Let $a : \T^d \times \T^2 \to \C$, $a = a(\ph,x)$, 
be a function. then, for $s \in \R$, its Sobolev norm $\| a \|_s$ is defined as 
\begin{equation*} \label{def Sobolev norm generale}
\| a \|_s^2 := \sum_{(\ell, j) \in \Z^d \times \Z^2} 
\langle \ell, j \rangle^{2s} | \widehat a(\ell,j) |^2 ,
\quad \ 
\langle \ell, j \rangle := \max \{ 1, |\ell|, |j| \},
\end{equation*}
where $\widehat a(\ell,j)$ (which are scalars, or vectors, or matrices) 
are the Fourier coefficients of $a(\ph,x)$, namely
\[
\widehat a(\ell,j) := \frac{1}{(2\pi)^{d+2}} \int_{\T^{d+2}} 
a(\ph,x) e^{- \ii (\ell \cdot \ph + j \cdot x)} \, d\ph dx. 
\]
We denote, for $E = \C^n$ or $\R^n$,
\begin{equation}	\label{def sobolev}
	\begin{aligned}
		H^s 
		&
		:= H^s_{\ph,x} 
		:= H^s(\T^{d} \times \T^2) \\
		&
		:= H^s(\T^{d} \times \T^2, E) 
		:= \{ u : \T^{d} \times \T^2 \to E, \ \| u \|_s < \infty \},
	\end{aligned}
\end{equation}

In the paper we use Sobolev norms for 
(real or complex, scalar- or vector- or matrix-valued) functions $u( \ph, x; \om, \zeta)$, 
$(\ph,x) \in \T^d \times \T^2$, being Lipschitz continuous with respect to the parameters $\lambda:=(\om,\zeta) \in \R^{d+2}$.
We fix 
\begin{equation}\label{definizione s0}
s_0 \geq  \lfloor\tfrac{d+2}{2}\rfloor + 2
\end{equation}
once and for all, 
and define the weighted Sobolev norms in the following way. 

\begin{definition} 
{\bf (Weighted Sobolev norms).} 
\label{def:Lip F uniform} 
Let   $\g \in (0,1]$, $\Lambda \subseteq \R^{d + 2}$ 
and $s \geq s_0$. 
Given a function $u : \Lambda \to H^s(\T^d \times \T^2)$, 
$\lm \mapsto u(\lm) = u(\ph,x; \lm)$ 
that is Lipschitz continuous with respect to $\lm$, 
we define its weighted Sobolev norm by
$$
\| u \|_{s}^{\Lip(\g)} := \| u\|_{s}^{\sup} + \g \,\| u\|_{s-1}^{\rm lip}\,,
$$
where
\begin{equation*}
	\| u\|_{s}^{\sup} := \sup_{\lambda\in \Lambda} \| u(\lambda)\|_{s}\,, \quad \| u\|_{s}^{\rm lip}:= \sup_{\lambda_1,\lambda_2\in \Lambda  \atop \lambda_1\neq \lambda_2} \frac{\| u(\lambda_1)-u(\lambda_2)\|_{s}}{| \lambda_1-\lambda_2|}\,.
\end{equation*}
For $u$ independent of $(\ph,x)$, we simply denote by 
$| u |^{k_0,\g}:= | u|^{\sup} + \g \, | u|^{\rm lip} $.
\end{definition}

For any $N>0$, we define the smoothing operators (Fourier truncation)
\begin{equation}\label{def:smoothings}
(\Pi_N u)(\ph,x) := \sum_{\la \ell,j \ra \leq N} \hat u(\ell, j) e^{\ii (\ell\cdot\ph + j \cdot x)}, \qquad
\Pi^\perp_N := {\rm Id} - \Pi_N.
\end{equation}

\begin{lemma} {\bf (Smoothing).} \label{lemma:smoothing}
The smoothing operators $\Pi_N, \Pi_N^\perp$ satisfy 
the smoothing estimates
\begin{align}
\| \Pi_N u \|_{s}^{\Lip(\g)} 
& \leq N^a \| u \|_{s-a}^{\Lip(\g)}\, , \quad 0 \leq a \leq s, 
\label{p2-proi} \\
\| \Pi_N^\bot u \|_{s}^{\Lip(\g)}
& \leq N^{-a} \| u \|_{s + a}^{\Lip(\g)}\, , \quad  a \geq 0.
\label{p3-proi}
\end{align}
\end{lemma}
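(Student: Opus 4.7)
The plan is to reduce the Lipschitz estimates \eqref{p2-proi}--\eqref{p3-proi} to the corresponding pointwise (in $\lambda$) Sobolev estimates, which follow directly from the definitions by splitting the Fourier weight $\langle \ell, j\rangle^{2s}$ appropriately on the two disjoint Fourier supports $\{\langle \ell,j\rangle \leq N\}$ and $\{\langle \ell,j\rangle > N\}$.

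First, I would fix $\lambda \in \Lambda$ and a function $u(\lambda) = u(\ph,x;\lambda) \in H^{s}(\T^d \times \T^2)$. For the low-pass bound, using the definition of $\| \cdot \|_s$ and $\Pi_N$ in \eqref{def:smoothings}, for any $0\leq a\leq s$ one writes
\begin{equation*}
\| \Pi_N u(\lambda)\|_s^2
= \sum_{\la \ell,j\ra \leq N} \la \ell,j\ra^{2s} |\hat u(\ell,j;\lambda)|^2
= \sum_{\la \ell,j\ra \leq N} \la \ell,j\ra^{2a} \la \ell,j\ra^{2(s-a)} |\hat u(\ell,j;\lambda)|^2\,,
\end{equation*}
and the factor $\la \ell,j\ra^{2a}$ is bounded by $N^{2a}$ on the summation range, yielding $\| \Pi_N u(\lambda)\|_s \leq N^{a} \|u(\lambda)\|_{s-a}$. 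Analogously, for $a\geq 0$,
\begin{equation*}
\| \Pi_N^\perp u(\lambda)\|_s^2
= \sum_{\la \ell,j\ra > N} \la \ell,j\ra^{-2a} \la \ell,j\ra^{2(s+a)} |\hat u(\ell,j;\lambda)|^2
\leq N^{-2a} \|u(\lambda)\|_{s+a}^2\,,
\end{equation*}
since $\la \ell,j\ra^{-2a} \leq N^{-2a}$ on the relevant range. Taking the supremum over $\lambda \in \Lambda$ gives the corresponding bounds for the $\|\cdot\|_s^{\sup}$ seminorm.

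For the Lipschitz part, the crucial observation is that the smoothing operators $\Pi_N$ and $\Pi_N^\perp$ act only on the Fourier indices $(\ell,j)$ and commute with the operation of taking differences in the parameter $\lambda$. Therefore, for any $\lambda_1\neq \lambda_2 \in \Lambda$,
\begin{equation*}
\Pi_N u(\lambda_1) - \Pi_N u(\lambda_2) = \Pi_N\big(u(\lambda_1) - u(\lambda_2)\big)\,,
\end{equation*}
and similarly for $\Pi_N^\perp$. Applying the pointwise bounds just established to $u(\lambda_1)-u(\lambda_2)$ at Sobolev index $s-1$ in place of $s$ (so that the smoothing estimates hold with the same exponent $N^a$, respectively $N^{-a}$), dividing by $|\lambda_1-\lambda_2|$ and taking the supremum gives
\begin{equation*}
\| \Pi_N u\|_{s-1}^{\rm lip} \leq N^a \| u\|_{s-1-a}^{\rm lip}\,,\qquad
\| \Pi_N^\perp u\|_{s-1}^{\rm lip} \leq N^{-a} \| u\|_{s-1+a}^{\rm lip}\,.
\end{equation*}

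Finally, combining the $\sup$ and $\rm lip$ bounds with the same constants $N^a$ and $N^{-a}$, and recalling the definition of $\|\cdot\|_s^{\Lip(\g)}$ in Definition \ref{def:Lip F uniform}, one obtains \eqref{p2-proi} and \eqref{p3-proi}. There is no real obstacle here: the statement is a standard consequence of the spectral characterization of the Sobolev norm, and the only thing one has to check carefully is that the same exponent $N^{\pm a}$ is used simultaneously in the $\sup$- and $\rm lip$-seminorms, which is guaranteed by the commutation of $\Pi_N, \Pi_N^\perp$ with $\lambda$-differences.
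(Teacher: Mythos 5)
Your proof is correct and is exactly the standard argument one expects here: the paper states Lemma \ref{lemma:smoothing} without proof, since it is an elementary consequence of the Fourier characterization of the norm. Your splitting of the weight $\langle\ell,j\rangle^{2s}$ on the two Fourier supports, and the observation that $\Pi_N,\Pi_N^\perp$ commute with differences in $\lambda$ so that the $\sup$- and $\mathrm{lip}$-seminorms are controlled with the same factor $N^{\pm a}$, fills the omitted proof correctly (note only that the pointwise bound you invoke at index $s-1$ holds for all $a\geq 0$ because the norms are defined for every $s\in\R$, so the formal restriction $a\leq s-1$ is immaterial).
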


\begin{lemma}{\bf (Product).}
\label{lemma:LS norms}
 For all $ s \geq s_0$, 
\begin{align}
\| uv \|_{s}^{\Lip(\g)}
& \lesssim_s C(s)  \| u \|_{s}^{\Lip(\g)} \| v \|_{s_0}^{\Lip(\g)}+ C(s_0)  \| u \|_{s_0}^{\Lip(\g)} \| v \|_{s}^{\Lip(\g)}\,.
\label{p1-pr}
\end{align}
\end{lemma}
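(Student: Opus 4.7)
The plan is to reduce to the classical (unweighted) tame estimate for functions on $\T^{d+2}$,
\begin{equation*}
\|fg\|_\sigma \lesssim_\sigma \|f\|_\sigma \|g\|_{s_0} + \|f\|_{s_0}\|g\|_\sigma, \qquad \sigma \geq 0,
\end{equation*}
and then to transfer this bound to the Lipschitz-weighted norm of Definition \ref{def:Lip F uniform} by a telescoping identity in the parameter $\lm$.

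For the unweighted estimate I would argue on the Fourier side. Using the convolution formula for $\widehat{fg}$ and the elementary inequality $\la \ell,j\ra^\sigma \lesssim_\sigma \la \ell_1,j_1\ra^\sigma + \la \ell_2,j_2\ra^\sigma$ with $(\ell_1,j_1)+(\ell_2,j_2)=(\ell,j)$, the convolution splits into a high-low plus a low-high piece. Applying Cauchy--Schwarz in the low-frequency variable, together with the summability of $\sum_{(\ell,j)\in \Z^{d+2}} \la \ell,j\ra^{-2s_0}$ (which converges by the choice of $s_0$ in \eqref{definizione s0}, since $2s_0 > d+2$), yields the desired bound at every $\sigma \geq 0$.

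To pass to the Lipschitz-weighted norm, I would apply the unweighted estimate at $\sigma=s$ pointwise in $\lm$ to control $\|uv\|_{s}^{\sup}$, and for the Lipschitz semi-norm $\|uv\|_{s-1}^{\rm lip}$ use the telescoping identity
\begin{equation*}
u(\lm_1)v(\lm_1)-u(\lm_2)v(\lm_2) = \bigl(u(\lm_1)-u(\lm_2)\bigr) v(\lm_1) + u(\lm_2)\bigl(v(\lm_1)-v(\lm_2)\bigr),
\end{equation*}
applying the unweighted estimate at $\sigma=s-1$ to each summand before dividing by $|\lm_1-\lm_2|$ and taking the supremum over $\lm_1\neq \lm_2$. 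The four resulting terms have the form (sup-norm at index $s$ or $s_0$) $\times$ (lip-norm at index $s-1$ or $s_0-1$); multiplying by $\g$ and using the trivial monotonicity $\|u\|_{s_0-1}^{\rm lip}\leq \|u\|_{s-1}^{\rm lip}$ (valid since $s_0\leq s$), each product is absorbed into the right-hand side of the statement via the bounds $\|u\|_{s}^{\sup}\leq \|u\|_{s}^{\Lip(\g)}$ and $\g \|u\|_{s-1}^{\rm lip}\leq \|u\|_{s}^{\Lip(\g)}$. The only point requiring care is that the Lipschitz semi-norm in Definition \ref{def:Lip F uniform} is measured at index $s-1$ rather than $s$, which forces the base tame estimate to be invoked at $\sigma=s-1$; since $s_0-1>(d+2)/2$ by \eqref{definizione s0}, no Sobolev-embedding threshold issue arises, and the whole argument remains a routine Moser/Kato--Ponce-type computation adapted to the Lipschitz-weighted setting, with no substantive obstacle expected.
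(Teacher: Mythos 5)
The paper states this lemma without proof (it is a standard tame product estimate in this literature), so there is no in-paper argument to compare against; your reduction to the unweighted Fourier-side tame estimate plus a telescoping identity in $\lm$ is exactly the standard route, and it does work. One step, however, needs to be made precise, because as literally written it does not close. If you apply the base estimate $\|fg\|_\sigma\lesssim_\sigma\|f\|_\sigma\|g\|_{s_0}+\|f\|_{s_0}\|g\|_\sigma$ at $\sigma=s-1$ to the summand $\bigl(u(\lm_1)-u(\lm_2)\bigr)v(\lm_1)$, the second term it produces is $\gamma\,\|u\|_{s_0}^{\rm lip}\,\|v\|_{s-1}^{\sup}$. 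Since Definition \ref{def:Lip F uniform} weights the Lipschitz seminorm at index $s_0-1$, the factor $\gamma\,\|u\|_{s_0}^{\rm lip}$ is controlled by $\|u\|_{s_0+1}^{\Lip(\g)}$ but \emph{not} by $\|u\|_{s_0}^{\Lip(\g)}$; trading up to $\gamma\|u\|_{s-1}^{\rm lip}\le\|u\|_s^{\Lip(\g)}$ only yields the product $\|u\|_s^{\Lip(\g)}\|v\|_s^{\Lip(\g)}$, which is not bounded by the right-hand side of \eqref{p1-pr} (take $u=v$ concentrated at high frequency).

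The fix is to invoke the base estimate in the shifted asymmetric form
\begin{equation*}
\|fg\|_{s-1}\ \lesssim_s\ \|f\|_{s-1}\,\|g\|_{s_0}\ +\ \|f\|_{s_0-1}\,\|g\|_{s}\,,
\end{equation*}
obtained from the same convolution splitting by running Cauchy--Schwarz with the weight $\langle\cdot\rangle^{-2(s_0-1)}$ on the low-frequency factor; this is summable precisely because $2(s_0-1)>d+2$, which is guaranteed by \eqref{definizione s0}. With this version (and its mirror image applied to $u(\lm_2)\bigl(v(\lm_1)-v(\lm_2)\bigr)$), the four terms are exactly of the form you announce, namely sup-norms at indices $s$ or $s_0$ paired with lip-norms at indices $s-1$ or $s_0-1$, and each is absorbed into \eqref{p1-pr}. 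So your description of the outcome is right, but the estimate you name as the input does not produce it; you must state and prove the shifted variant. This is a small, entirely repairable imprecision rather than a conceptual error.
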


\subsection{Matrix representation of linear operators}
Let
${\mathcal R}  : L^2(\T^2) \to L^2(\T^2)$ be a linear operator. Such an operator can be represented as
\begin{equation}\label{matriciale 1}
{\mathcal R} u (x) := \sum_{j, j' \in \Z^2} {\mathcal R}_j^{j'}\widehat u(j') e^{\ii j \cdot x}, 
\quad  \text{for} \quad  u (x) = \sum_{j \in \Z^2} \widehat u(j) e^{\ii j \cdot x}, 
\end{equation}
where, for $j, j' \in \Z^2$, the matrix element ${\mathcal R}_j^{j'}$ is defined by 
\begin{equation}\label{rappresentazione blocchi 3 per 3}
{\mathcal R}_j^{j'} :=  \frac{1}{(2\pi)^2} \int_{\T^2} 
{\mathcal R}[e^{\ii j' \cdot x}] e^{- \ii j \cdot x}\, d x\,. 
\end{equation}

We also consider smooth $\vphi$-dependent families of linear operators 
$\T^d \to {\mathcal B} (L^2(\T^2))$, $\vphi \mapsto {\mathcal R}(\vphi)$, 
which we write in Fourier series with respect to $\vphi$ as 
\begin{equation}\label{matrix representation 1}
{\mathcal R}(\vphi) = \sum_{\ell \in \Z^d} \widehat{\mathcal R}(\ell) e^{\ii \ell \cdot \vphi}, \quad \widehat{\mathcal R}(\ell) := \frac{1}{(2 \pi)^d} \int_{\T^d} {\mathcal R}(\vphi) e^{- \ii \ell \cdot \vphi}\, d \vphi, \quad \ell \in \Z^d\,. 
\end{equation}
According to \eqref{rappresentazione blocchi 3 per 3}, for any $\ell \in \Z^d$, the linear operator $\widehat{\mathcal R}(\ell) \in {\mathcal B} (L^2(\T^2))$ is identified 
with the matrix $(\widehat{\mathcal R}(\ell)_j^{j'})_{j, j' \in \Z^2}$.
A map $\T^d \to {\mathcal B} (L^2(\T^2))$, $\vphi \mapsto {\mathcal R}(\vphi)$ 
can be also regarded as a linear operator 
$L^2(\T^{d + 2}) \to L^2(\T^{d + 2})$ by 
\begin{equation} \label{amatriciana}
{\mathcal R} u(\vphi, x) := \sum_{\begin{subarray}{c}
\ell, \ell' \in \Z^d \\
j, j' \in \Z^2
\end{subarray}} \widehat{\mathcal R}(\ell - \ell')_j^{j '} \widehat u(\ell', j') e^{\ii (\ell \cdot \vphi + j \cdot x)}, \quad \forall u \in L^2(\T^{d + 2})\,. 
\end{equation}
If the operator ${\mathcal R}$ is invariant on the space of functions with zero average in $x$, we identify ${\mathcal R}$ with the matrix 
$$
\Big( \widehat{\mathcal R}(\ell)_j^{j'} \Big)_{\begin{subarray}{c}
j , j' \in \Z^2 \setminus \{ 0 \} \\
\ell \in \Z^d
\end{subarray}}
$$
\begin{definition}
	{\bf (Diagonal operators).}
	\label{def block-diagonal op}
	Let ${\mathcal R}$ be a linear operator as in 
	\eqref{matriciale 1}-\eqref{amatriciana}. We define ${\mathcal D}_{\mathcal R}$ as the operator defined by 
	\begin{equation*}
		{\mathcal D}_{\mathcal R} := {\rm diag}_{j \in \Z^2} \widehat{\mathcal R}_j^j(0)\,, \quad (\mD_{\mR})_j^{j'}(\ell) := \begin{cases}
			\widehat{\mathcal R}_j^{j}(\ell) & j=j'\,, \ \ell=0\,, \\
			0 & \text{otherwise}\,.
		\end{cases}\,.
	\end{equation*}
	In particular, we say that $\mR$
	is a \emph{diagonal operator} if $\mR \equiv \mD_{\mR}$.
\end{definition}

For the purpose of the  Normal form method for the linearized operator in Section \ref{sez riducibilita}, it is convenient to introduce the following norms that take into account the order and the off-diagonal decay of the matrix elements representing any linear operator on $L^2(\T^{d+2})$.
\begin{definition} \label{block norm}
{\bf (Matrix decay norm and the class ${\mathcal O}{\mathcal P}{\mathcal M}^m_s$).}
Let $m \in \R$, $s \geq s_0$ and $\mR$ be an operator represented by the matrix in \eqref{amatriciana}. We say that ${\mathcal R}$ belongs to the class ${\mathcal O}{\mathcal P}{\mathcal M}^m_s$ if 
\begin{equation} \label{def decay norm}
| \mR |_{m, s} := \sup_{j' \in \Z^2} 
\Big( \sum_{(\ell,j) \in \Z^{d+2}} 
\langle \ell, j-j' \rangle^{2s} | \widehat \mR(\ell)_j^{j'}|^2  \Big)^{\frac12} \langle j' \rangle^{- m} < \infty
\end{equation}
If the operator $\mR = \mR(\lm)$ is Lipschitz with respect to the parameter $\lambda \in \Lambda \subseteq  \R^{\nu+2}$, 
we define 
\begin{equation} \label{def decay norm parametri}
\begin{aligned}
& | {\mathcal R} |_{m, s}^{{\rm Lip}(\gamma)} := |{\mathcal R}|_{m, s}^{\rm sup} + \gamma |{\mathcal R}|^{\rm lip}_{m, s - 1}\,, \\
& |{\mathcal R}|_{m, s}^{\rm sup} := \sup_{\lambda \in \Lambda} |{\mathcal R}(\lambda)|_{m, s}, \quad |{\mathcal R}|_{m, s - 1}^{\rm lip} := \sup_{\begin{subarray}{c}
\lambda_1, \lambda_2 \in \Lambda \\
\lambda_1 \neq \lambda_2
\end{subarray}} \dfrac{|{\mathcal R}(\lambda_1) - {\mathcal R}(\lambda_2)|_{m, s - 1}}{|\lambda_1 - \lambda_2|}
\end{aligned}
\end{equation}
\end{definition}

Directly from the latter definition, it follows that 
\begin{equation*}\label{prop elementari}
\begin{aligned}
& m  \leq m' \Longrightarrow {\mathcal O}{\mathcal P}{\mathcal M}^m_s \subseteq {\mathcal O}{\mathcal P}{\mathcal M}^{m'}_s \quad \text{and} \quad |\cdot |_{m', s}^{{\rm Lip}(\gamma)} \leq |\cdot |_{m, s}^{{\rm Lip}(\gamma)}, \\
& s \leq s' \Longrightarrow {\mathcal O}{\mathcal P}{\mathcal M}^m_{s'} \subseteq {\mathcal O}{\mathcal P}{\mathcal M}^m_s \quad \text{and} \quad | \cdot |_{m, s}^{{\rm Lip}(\gamma)} \leq |\cdot|_{m, s'}^{{\rm Lip}(\gamma)}\,. 
\end{aligned}
\end{equation*}
We now state some 
standard properties of the decay norms 
that are needed for the reducibility scheme 
of Section \ref{sezione riducibilita a blocchi}. If $a \in H^s$, $s \geq s_0$, then the multiplication operator ${\mathcal M}_a : u \mapsto a u$ satisfies 
\begin{equation}\label{prop multiplication decay}
{\mathcal M}_a \in {\mathcal O}{\mathcal P}{\mathcal M}_s^0 \quad \text{and} \quad |{\mathcal M}_a|_{0, s}^{{\rm Lip}(\gamma)} \lesssim \| a \|_s^{{\rm Lip}(\gamma)}\,. 
\end{equation}

\begin{lemma}\label{proprieta standard norma decay}
$(i)$ Let $s \geq s_0$ and ${\mathcal R} \in {\mathcal O}{\mathcal P}{\mathcal M}^0_s$. 
If $\| u \|_s^{{\rm Lip}(\gamma)} < \infty$, then 
$$
\| {\mathcal R} u \|_s^{{\rm Lip}(\gamma)} \lesssim_{s} |{\mathcal R}|_{0, s}^{{\rm Lip}(\gamma)} \| u \|_s^{{\rm Lip}(\gamma)}\,. 
$$
$(ii)$ Let $s \geq s_0$, $m, m' \in \R$, and let ${\mathcal R} \in {\mathcal O}{\mathcal P}{\mathcal M}^m_s$, ${\mathcal Q} \in {\mathcal O}{\mathcal P}{\mathcal M}^{m'}_{s + |m|}$. 
Then ${\mathcal R} {\mathcal Q} \in {\mathcal O}{\mathcal P}{\mathcal M}^{m + m'}_s$ and 
$$
|{\mathcal R}{\mathcal Q}|_{m + m', s}^{{\rm Lip}(\gamma)} \lesssim_{s, m} |{\mathcal R}|_{m, s}^{{\rm Lip}(\gamma)} |{\mathcal Q}|_{m', s_0 + |m|}^{{\rm Lip}(\gamma)} + |{\mathcal R}|_{m, s_0}^{{\rm Lip}(\gamma)} |{\mathcal Q}|_{m', s + |m|}^{{\rm Lip}(\gamma)}\ \,. 
$$
$(iii)$ Let $s \geq s_0$ and ${\mathcal R} \in {\mathcal O}{\mathcal P}{\mathcal M}^0_s$. 
Then, for any integer $n \geq 1$, ${\mathcal R}^n \in {\mathcal O}{\mathcal P}{\mathcal M}^0_s$ and there exist constants $C(s_0),C(s) > 0$, independent of $n$, such that 
\begin{equation*}
	\begin{aligned}
		& |{\mathcal R}^n|_{0, s_0}^{{\rm Lip}(\gamma)} \leq C(s_0)^{n - 1} \big(|{\mathcal R}|_{0, s_0}^{{\rm Lip}(\gamma)}\big)^{n} \,, \\
		& |{\mathcal R}^n|_{0, s}^{{\rm Lip}(\gamma)} \leq n\,C(s)^{n - 1} \big(C(s_0)|{\mathcal R}|_{0, s_0}^{{\rm Lip}(\gamma)}\big)^{n - 1} |{\mathcal R}|_{0, s}^{{\rm Lip}(\gamma)}\,;
	\end{aligned}
\end{equation*}
$(iv)$ Let $s \geq s_0$, $m \geq 0$ and ${\mathcal R} \in {\mathcal O}{\mathcal P}{\mathcal M}^{- m}_s$. 
Then there exists $\delta(s) \in (0, 1)$ small enough such that, 
if $|{\mathcal R}|_{- m, s_0}^{{\rm Lip}(\gamma)} \leq \delta(s)$, 
then the map $\Phi = {\rm Id} + {\mathcal R}$ is invertible 
and the inverse satisfies the estimate 
\[
|\Phi^{- 1} - {\rm Id}|_{- m, s}^{{\rm Lip}(\gamma)} \lesssim_{s, m} |{\mathcal R}|_{- m, s}^{{\rm Lip}(\gamma)}. 
\]
\noindent
$(v)$ Let $s \geq s_0$, $m \in \R$ and ${\mathcal R} \in {\mathcal O}{\mathcal P}{\mathcal M}^m_s$. Let ${\mathcal D}_{\mathcal R}$ be the diagonal operator as in Definition \ref{def block-diagonal op}.
Then ${\mathcal D}_{\mathcal R} \in {\mathcal O}{\mathcal P}{\mathcal M}^m_s$ and $|{\mathcal D}_{\mathcal R}|_{m, s}^{{\rm Lip}(\gamma)} \lesssim |{\mathcal R}|_{m, s}^{{\rm Lip}(\gamma)}$. 
As a consequence, 
\[
| \widehat{\mathcal R}_j^j(0) |^{{\rm Lip}(\gamma)} \lesssim \langle j \rangle^m|{\mathcal R}|_{s_0}^{{\rm Lip}(\gamma)}\,.
\] 
\end{lemma}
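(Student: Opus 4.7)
\medskip

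\noindent\textbf{Proof proposal.} All five statements are standard consequences of the matrix definition \eqref{def decay norm} and will be handled via the decomposition
\[
\langle \ell, j - j' \rangle \lesssim \langle \ell - \ell'', j - j''\rangle \langle \ell'', j'' - j'\rangle,
\qquad \langle \ell, j \rangle \lesssim \langle \ell - \ell', j - j'\rangle \langle \ell', j'\rangle,
\]
and an application of the Cauchy--Schwarz inequality together with Young's convolution inequality. I give the plan item by item; in each case the Lipschitz-in-$\lambda$ estimate follows by the standard argument of applying the sup estimate to the difference $\mathcal{R}(\lambda_1) - \mathcal{R}(\lambda_2)$, so I only discuss the sup part.

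For $(i)$, starting from \eqref{amatriciana} one writes $\widehat{\mathcal{R}u}(\ell, j) = \sum_{\ell', j'} \widehat{\mathcal{R}}(\ell - \ell')_j^{j'} \widehat{u}(\ell', j')$ and splits $\langle \ell, j\rangle^s \lesssim \langle \ell - \ell', j - j'\rangle^s + \langle \ell', j'\rangle^s$. The first contribution is controlled by $|\mathcal{R}|_{0,s}$ multiplied by $\|u\|_{s_0}$ (via the $\ell^1$--$\ell^2$ Cauchy--Schwarz in the indices $(\ell',j')$, using that $\sum_{\ell', j'} \langle \ell', j'\rangle^{-2s_0} < \infty$ thanks to \eqref{definizione s0}), and the second by $|\mathcal{R}|_{0,s_0}$ multiplied by $\|u\|_s$. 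The claim follows.

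For $(ii)$, using $(\mathcal{R}\mathcal{Q})(\ell)_j^{j'} = \sum_{\ell_1, j_1} \widehat{\mathcal{R}}(\ell - \ell_1)_j^{j_1}\, \widehat{\mathcal{Q}}(\ell_1)_{j_1}^{j'}$, one inserts $\langle \ell, j - j'\rangle^s \lesssim \langle \ell - \ell_1, j - j_1\rangle^s + \langle \ell_1, j_1 - j'\rangle^s$ and uses $\langle j_1 \rangle^{m} \lesssim_m \langle j_1 - j'\rangle^{|m|} \langle j' \rangle^m$ to absorb the middle factor $\langle j_1\rangle^m$ produced by the definition of $|\mathcal{R}|_{m, \cdot}$; this is where one needs $\mathcal{Q}$ in the larger space $\mathcal{OPM}^{m'}_{s + |m|}$. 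Cauchy--Schwarz in $(\ell_1, j_1)$ and the convergence $\sum \langle \ell_1, j_1 - j'\rangle^{-2s_0} < \infty$ (thanks to \eqref{definizione s0}) close the estimate. Item $(iii)$ is then obtained by induction on $n$ using $(ii)$ with $m = m' = 0$, tracking carefully the combinatorial constants $C(s), C(s_0)$: the balanced term in the product estimate forces exactly one factor of $|\mathcal{R}|_{0,s}$ and $n - 1$ factors of $|\mathcal{R}|_{0, s_0}$ in the top-regularity bound.

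For $(iv)$, under $|\mathcal{R}|_{-m, s_0}^{\mathrm{Lip}(\gamma)} \le \delta(s)$ small, the Neumann series $\Phi^{-1} - \mathrm{Id} = \sum_{n \geq 1} (-\mathcal{R})^n$ converges in $|\cdot|_{-m, s}^{\mathrm{Lip}(\gamma)}$ by $(iii)$ applied at order $-m$ (or by the same bootstrap with $m' = -m$ at each step); the dominant term is the first one, yielding $|\Phi^{-1} - \mathrm{Id}|_{-m, s}^{\mathrm{Lip}(\gamma)} \lesssim_{s,m} |\mathcal{R}|_{-m, s}^{\mathrm{Lip}(\gamma)}$. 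Finally $(v)$ is immediate: by Definition~\ref{def block-diagonal op} only the single term with $\ell = 0$, $j = j'$ survives in the sum \eqref{def decay norm}, so $|\mathcal{D}_{\mathcal{R}}|_{m, s} \leq |\mathcal{R}|_{m, s}$, and the pointwise bound on $\widehat{\mathcal{R}}_j^j(0)$ comes from restricting the supremum in \eqref{def decay norm} to the single index $(\ell, j) = (0, j)$. The main technical nuisance is book-keeping the $|m|$ loss in $(ii)$, which propagates through $(iii)$ and $(iv)$; no deep idea is required beyond Cauchy--Schwarz combined with the triangular decompositions above.
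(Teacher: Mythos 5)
Your proposal is correct and follows essentially the same route as the paper: the same triangular splitting of $\langle \ell, j-j'\rangle^s$, the elementary inequality $\langle i\rangle^m\langle j'\rangle^{-m}\lesssim_m\langle j'-i\rangle^{|m|}$ to absorb the order of the left factor (which is exactly where the $s+|m|$ index on $\mathcal Q$ enters), Cauchy--Schwarz against the convergent $\langle\cdot\rangle^{-2s_0}$ series, induction for $(iii)$, Neumann series for $(iv)$, and direct inspection of the definition for $(v)$. The paper is in fact terser (it only writes out $(ii)$ and dispatches $(iii)$--$(v)$ in one line each), so nothing further is needed.
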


\begin{proof}
$(i), (ii)$ The proofs of the first two items use similar arguments. We only prove item $(ii)$. 
%
We start by assuming that both $\mR$ and $\mQ$ do not depend on the parameter $\lambda$. The matrix elements for the composition operator $\mR\mQ$ follow the rule
\begin{equation*}
	\widehat{\mR\mQ}(\ell)_j^{j'} = \sum_{(k, i) \in \Z^{d + 2}} \widehat{\mR}(\ell- k)_{j}^{i} \widehat{\mQ}(k)_{i}^{j'}\,.
\end{equation*}
Using that $\langle \ell, j - j' \rangle^s \lesssim_s \langle \ell - k, j - i \rangle^s + \langle k, i - j' \rangle^s$, one gets
\begin{align}
\sum_{\begin{subarray}{c}
\ell \in \Z^d \\
j\in \Z^2 
\end{subarray}} \langle \ell, j - j' \rangle^{2 s} |	\widehat{\mR\mQ}(\ell)_j^{j'}|^2 \langle j' \rangle^{ - 2 (m + m')} & \lesssim_s (A) + (B) \,, \label{paris 0} 
\end{align}
where 
\begin{align}
&
(A) := \sum_{\begin{subarray}{c}
\ell \in \Z^d \\
j \in \Z^2
\end{subarray}} \Big( \sum_{\begin{subarray}{c}
k \in \Z^d \\
i \in \Z^2
\end{subarray}} \langle \ell - k, j - i \rangle^{s} |\widehat{\mathcal R}_j^{i}(\ell - k) | | \widehat{\mathcal Q}_{i}^{j'}(k) | \Big)^2 \langle j' \rangle^{- 2(m + m')}\,,  \nonumber	
\\
&
(B) := \sum_{\begin{subarray}{c}
\ell \in \Z^d \\
j \in \Z^2
\end{subarray}} \Big( \sum_{\begin{subarray}{c}
k \in \Z^d \\
i  \in \Z^2
\end{subarray}} \langle k ,  i - j'  \rangle^{s} |\widehat{\mathcal R}_j^{i}(\ell - k) | | \widehat{\mathcal Q}_{i}^{j'}(k) | \Big)^2\langle j' \rangle^{- 2(m + m')} \,.\nonumber 
\end{align}
We start with estimating $(A)$. 
By the elementary inequality 
$
\langle i \rangle^m\langle j' \rangle^{- m} \lesssim_m  \langle j' - i \rangle^{|m|}
$,
the Cauchy-Schwartz inequality and having the series $\sum_{k \in \Z^d, i \in \Z^2} \langle k, i - j'\rangle^{- 2 s_0}  = C(s_0)<\infty$,
one has
\begin{equation*}
	\begin{footnotesize}
		\begin{aligned}
			(A) &  \lesssim_{s,m} \sum_{\begin{subarray}{c}
					k \in \Z^d \\
					i \in \Z^2
			\end{subarray}} \langle k, i - j' \rangle^{2 (s_0 + |m|)}| \widehat {\mathcal Q}_{i}^{j'}(k) |^2 \langle j' \rangle^{- 2 m'} \sum_{\begin{subarray}{c}
					\ell \in \Z^d \\
					j \in \Z^2
			\end{subarray}} \langle \ell - k, j - i \rangle^{ 2s} |\widehat{\mathcal R}_j^{i}(\ell - k) |^2 \langle i \rangle^{- 2 m} 
			\\
			& \stackrel{\eqref{def decay norm}}{\lesssim_{s,m}} |{\mathcal Q}|_{s_0 + |m|, m'}^2 |{\mathcal R}|_{s, m}^2  
		\end{aligned}
	\end{footnotesize}
\end{equation*}
By similar arguments, one gets $(B) \lesssim_m |{\mathcal Q}|_{s + |m|, m'}^2 |{\mathcal R}|_{s_0, m}^2 $ and hence the claimed estimate follows by taking the supremum over $j' \in \Z^2$ in \eqref{paris 0}. If we reintroduce the dependence on the parameter $\lambda$, the  estimate for the Lipschitz seminorm follows as usual by taking two parameters $\lambda_1, \lambda_2$ and writing ${\mathcal R}(\lambda_1) {\mathcal Q}(\lambda_1) - {\mathcal R}(\lambda_2) {\mathcal Q}(\lambda_2) = ({\mathcal R}(\lambda_1) - {\mathcal R}(\lambda_2)) {\mathcal Q}(\lambda_1) + {\mathcal R}(\lambda_2) ({\mathcal Q}(\lambda_1) -  {\mathcal Q}(\lambda_2))$. 

$iii)$ The claim follows by an induction argument and item $(ii)$.

$iv)$ The claim follows by a Neumann series argument, together with item $(iii)$.

$v)$ The claims are a direct consequence of the definition of the matrix decay norm in Definition \ref{block norm}.
\end{proof}

We recall the definition of the set of the Diophantine vectors in a bounded, measurable set $\Lambda \subset \R^{d + 2}$. Given $\gamma, \tau > 0$, we define 
\begin{equation}\label{set DC trasporto a}
	\Lambda(\gamma,\tau):= \big\{  (\omega,\zeta)\in \Lambda \,:\, |\omega\cdot \ell+\zeta\cdot j| \geq \frac{\gamma}{|(\ell,j)|^{\tau}}\,, \ \forall\,(\ell,j)\in\Z^{d+2}\setminus\{0\}  \big\}\,,
\end{equation}
where $|(\ell,j)|:=|\ell|+|j|$ for any $\ell\in\Z^d$, $j\in\Z^2$.

\begin{lemma}\label{lemma eq omologica descent}
	{\bf (Homological equation).}
Let $\Lambda\ni \lambda=(\omega,\zeta)\mapsto{\mathcal R}(\lambda)$ be a Lipschitz family of linear operators in ${\mathcal O}{\mathcal P}{\mathcal M}^m_{s + 2 \tau + 1}$. Then, for any $\lambda \in \Lambda(\gamma, \tau)$, there exists a solution $\Psi = \Psi(\lambda) \in {\mathcal O}{\mathcal P}{\mathcal M}^m_{s}$ of the equation 
\begin{equation}\label{eq omologica descent astratta}
\omega \cdot \partial_\vphi \Psi + [\zeta \cdot \nabla, \Psi] + {\mathcal R} = {\mathcal D}_{\mathcal R}
\end{equation}
satisfying the estimate $|\Psi|_{m, s}^{{\rm Lip}(\gamma)} \lesssim \gamma^{- 1} |{\mathcal R}|_{m, s + 2 \tau + 1}^{{\rm Lip}(\gamma)}$. Moreover, if ${\mathcal R}$ is invariant on the space of zero average functions, also $\Psi$ is invariant on the space of zero average functions. 
\end{lemma}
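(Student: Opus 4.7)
The plan is to solve \eqref{eq omologica descent astratta} explicitly at the level of matrix elements and then estimate the resulting series using the Diophantine condition together with the matrix decay norm of Definition \ref{block norm}. Passing to Fourier in both $\vphi$ and $x$, the operator $\omega \cdot \partial_\vphi + [\zeta \cdot \nabla, \cdot]$ acts on the matrix coefficient $\widehat{\Psi}(\ell)_j^{j'}$ by multiplication by $\ii(\omega \cdot \ell + \zeta \cdot (j - j'))$, since $\zeta \cdot \nabla$ is the Fourier multiplier $\ii \zeta \cdot j$ and its commutator with $\Psi$ produces $\ii \zeta \cdot (j - j')$. Consequently, \eqref{eq omologica descent astratta} is equivalent, entry by entry, to
\[
\ii\bigl(\omega \cdot \ell + \zeta \cdot (j - j')\bigr)\, \widehat{\Psi}(\ell)_j^{j'} = -\,\widehat{\mathcal{R}}(\ell)_j^{j'} \quad \text{for } (\ell, j-j') \neq (0,0),
\]
and imposing $\widehat{\Psi}(0)_j^{j} = 0$ accounts exactly for the removal of the diagonal term $\mathcal{D}_{\mathcal{R}}$. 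I would therefore define
\[
\widehat{\Psi}(\ell)_j^{j'} := \ii\,\frac{\widehat{\mathcal{R}}(\ell)_j^{j'}}{\omega \cdot \ell + \zeta \cdot (j - j')} \quad \text{if } (\ell, j-j') \neq (0,0), \qquad \widehat{\Psi}(0)_j^{j} := 0.
\]

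For the quantitative bound, on the set $\Lambda(\gamma,\tau)$ one has $|\omega\cdot \ell + \zeta \cdot (j-j')| \geq \gamma \langle \ell, j-j' \rangle^{-\tau}$ for $(\ell, j-j') \neq (0,0)$; this is immediate from \eqref{set DC trasporto a} applied to the integer vector $(\ell, j-j')$. Hence $|\widehat{\Psi}(\ell)_j^{j'}| \leq \gamma^{-1} \langle \ell, j-j' \rangle^\tau |\widehat{\mathcal{R}}(\ell)_j^{j'}|$, and inserting this into \eqref{def decay norm} gives the sup estimate $|\Psi|_{m,s}^{\sup} \lesssim \gamma^{-1} |\mathcal{R}|_{m, s+\tau}^{\sup}$. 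For the Lipschitz part, I would take $\lambda_1, \lambda_2 \in \Lambda(\gamma,\tau)$ and write the standard telescoping identity
\[
\widehat{\Psi}(\lambda_1) - \widehat{\Psi}(\lambda_2)
= \ii\,\frac{\widehat{\mathcal{R}}(\lambda_1) - \widehat{\mathcal{R}}(\lambda_2)}{\omega_1\cdot\ell + \zeta_1\cdot(j-j')}
- \ii\,\widehat{\mathcal{R}}(\lambda_2)\,\frac{(\omega_1-\omega_2)\cdot\ell + (\zeta_1-\zeta_2)\cdot(j-j')}{(\omega_1\cdot\ell + \zeta_1\cdot(j-j'))(\omega_2\cdot\ell + \zeta_2\cdot(j-j'))}.
\]
The first piece loses one factor $\gamma^{-1}\langle \ell, j-j'\rangle^\tau$, while the second loses $\gamma^{-2}\langle \ell, j-j'\rangle^{2\tau+1}$ and a factor $|\lambda_1-\lambda_2|$. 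Plugging into \eqref{def decay norm parametri} with the index $s-1$, multiplying by $\gamma$ and summing both contributions yields
\[
\gamma\, |\Psi|_{m, s-1}^{\mathrm{lip}} \,\lesssim\, |\mathcal{R}|_{m, s+\tau-1}^{\mathrm{lip}} + \gamma^{-1}|\mathcal{R}|_{m, s+2\tau}^{\sup} \,\lesssim\, \gamma^{-1} |\mathcal{R}|^{\mathrm{Lip}(\gamma)}_{m, s+2\tau+1},
\]
which combined with the sup bound gives the claimed $|\Psi|_{m,s}^{\mathrm{Lip}(\gamma)} \lesssim \gamma^{-1}|\mathcal{R}|_{m, s+2\tau+1}^{\mathrm{Lip}(\gamma)}$.

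Finally, the zero-average invariance is immediate from the explicit formula: if $\mathcal{R}$ preserves $H^s_0$, then its matrix elements $\widehat{\mathcal{R}}(\ell)_j^{j'}$ vanish whenever $j = 0$ or $j' = 0$, and the same is then true for $\widehat{\Psi}(\ell)_j^{j'}$ by construction. There is no real obstacle in this argument; the only place requiring care is the bookkeeping of the $\tau$-loss in the Lipschitz estimate, where the product of two small divisors evaluated at different parameters produces the extra $\langle \ell, j-j'\rangle^{\tau+1}$ factor that accounts for the final $2\tau+1$ derivative loss. All of this is standard and parallels the homological lemmas in the KAM literature for PDEs with Diophantine transport terms.
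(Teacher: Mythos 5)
Your proposal is correct and follows essentially the same route as the paper: solve the equation explicitly on matrix elements (your formula $\ii\,\widehat{\mathcal R}/(\omega\cdot\ell+\zeta\cdot(j-j'))$ coincides with the paper's $-\widehat{\mathcal R}/\bigl(\ii(\omega\cdot\ell+\zeta\cdot(j-j'))\bigr)$), bound the small divisors via \eqref{set DC trasporto a}, and handle the Lipschitz seminorm by the telescoping identity for the difference of divisors, which produces exactly the $\gamma^{-2}\langle\ell,j-j'\rangle^{2\tau+1}$ loss recorded in the paper's estimates \eqref{Psi cal R sup}--\eqref{Psi cal R lip}. Your bookkeeping of the sup and Lipschitz contributions into the final $|\Psi|_{m,s}^{{\rm Lip}(\gamma)}\lesssim\gamma^{-1}|\mathcal R|_{m,s+2\tau+1}^{{\rm Lip}(\gamma)}$ bound checks out, and your remark on the zero-average invariance (which the paper leaves implicit) is also correct.
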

\begin{proof}
By the matrix representation \eqref{matrix representation 1}, \eqref{amatriciana}, the equation \eqref{eq omologica descent astratta} is equivalent to 
$$
\ii \big(\omega \cdot \ell + \zeta \cdot (j - j') \big) \widehat \Psi_j^{j'}(\ell) + \widehat{\mathcal R}(\ell)_j^{j'} = 0
$$
for any $(\ell, j, j') \in \Z^d \times \Z^2 \times \Z^2$ with $(\ell, j, j') \neq (0, j, j)$. We then define $\Psi$ as 
\begin{equation*}\label{def Psi astratto descent}
\widehat \Psi(\ell)_j^{j'} := \begin{cases}
- \dfrac{\widehat{\mathcal R}(\ell)_j^{j'}}{\ii \big(\omega \cdot \ell + \zeta \cdot (j - j') \big)} \quad \text{if} \quad (\ell, j, j') \neq (0, j, j)\,, \\
0 \quad \text{otherwise.}
\end{cases}
\end{equation*}
Since $\lambda = (\omega, \zeta) \in \Lambda(\gamma, \tau)$, by \eqref{set DC trasporto a}, one has that 
$$
|\widehat \Psi(\ell)_j^{j'}| \leq \gamma^{- 1} \langle \ell, j - j' \rangle^\tau |\widehat{\mathcal R}(\ell)_j^{j'}|\,.
$$
The latter estimate, together with the Definition \ref{def decay norm}, implies that 
\begin{equation}\label{Psi cal R sup}
|\Psi|_{m, s} \lesssim \gamma^{- 1} |{\mathcal R}|_{m, s + \tau}.
\end{equation} We prove now the Lipschitz estimate. Let 
$\lambda_1, \lambda_2  \in \Lambda(\gamma, \tau)$. 
A direct computation shows that 
$$
\begin{aligned}
|\widehat \Psi(\ell)_j^{j'}(\lambda_1) - \widehat \Psi(\ell)_j^{j'}(\lambda_2)| & \lesssim \gamma^{- 1} \langle \ell, j - j' \rangle^\tau |\widehat{\mathcal R}(\ell)_j^{j'} (\lambda_1) - \widehat{\mathcal R}(\ell)_j^{j'}(\lambda_2)| \\
& \quad + \gamma^{- 2} \langle \ell, j - j' \rangle^{2 \tau + 1} |\widehat{\mathcal R}(\ell)_j^{j'}(\lambda_2)| |\lambda_1 - \lambda_2|\,.
\end{aligned}
$$
Hence by recalling \eqref{def decay norm}, \eqref{def decay norm parametri} one obtains that 
\begin{equation}\label{Psi cal R lip}
|\Psi|_{m, s}^{\rm lip} \lesssim \gamma^{- 1} |{\mathcal R}|_{m, s + \tau}^{\rm lip} + \gamma^{- 2} |{\mathcal R}|^{\rm sup}_{m, s + 2 \tau + 1}\,.
\end{equation}
The estimates \eqref{Psi cal R sup}, \eqref{Psi cal R lip} imply the claimed bound $|\Psi|_{m, s}^{{\rm Lip}(\gamma)} \lesssim \gamma^{- 1} |{\mathcal R}|_{m, s + 2 \tau + 1}^{{\rm Lip}(\gamma)}$. 
\end{proof}

For $N > 0$, we define the operators $\Pi_N {\mathcal R}$ and $\Pi_N^\perp \mR$ by means of their matrix representation as follows: 
\begin{equation}\label{def proiettore operatori matrici}
(\widehat{\Pi_N {\mathcal R}})_{j}^{j'}(\ell) := \begin{cases}
\widehat{\mathcal R}_j^{j'}(\ell) & \text{if } |\ell|, |j - j'| \leq N, \\
0 & \text{otherwise}\,, 
\end{cases} \qquad   \ \Pi_N^\bot {\mathcal R} := {\mathcal R} - \Pi_N {\mathcal R}\,. 
\end{equation}


\begin{lemma}\label{lemma proiettori decadimento}
For all $s, \a \geq 0$, $m \in \R$, one has 
$|\Pi_N {\mathcal R}|_{m, s + \alpha}^{{\rm Lip}(\gamma)} \leq N^\alpha |{\mathcal R}|_{m, s}^{{\rm Lip}(\gamma)}$ and $|\Pi_N^\bot {\mathcal R}|_{m, s}^{{\rm Lip}(\gamma)} \leq N^{- \alpha} |{\mathcal R}|_{m, s + \alpha}^{{\rm Lip}(\gamma)}$. 
\end{lemma}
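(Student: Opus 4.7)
The plan is to unwind the definitions directly: both inequalities are purely combinatorial consequences of the localization property of the projectors \eqref{def proiettore operatori matrici} in the $(\ell,j-j')$ variables, and the smoothing factor $N^{\pm\alpha}$ is extracted by the elementary bounds $\langle \ell, j-j'\rangle \leq CN$ on $\mathrm{supp}(\Pi_N)$ and $\langle \ell, j-j'\rangle^{-\alpha} \leq N^{-\alpha}$ on $\mathrm{supp}(\Pi_N^\bot)$.

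More precisely, fix $j' \in \Z^2$ and work with the sup part of the Lipschitz norm first. For the first bound, using that $\widehat{\Pi_N \mR}(\ell)_j^{j'}$ vanishes unless $|\ell|, |j-j'| \leq N$, and noting that on this region $\langle \ell, j-j'\rangle^{2\alpha} \leq N^{2\alpha}$ (up to the harmless constant $1$ in $\langle\cdot\rangle$), I would estimate
\begin{equation*}
\sum_{(\ell,j)} \langle \ell, j-j'\rangle^{2(s+\alpha)} |\widehat{\Pi_N \mR}(\ell)_j^{j'}|^2
\leq N^{2\alpha} \sum_{(\ell,j)} \langle \ell, j-j'\rangle^{2s} |\widehat \mR(\ell)_j^{j'}|^2,
\end{equation*}
multiply by $\langle j'\rangle^{-2m}$, and take the supremum over $j'$. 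For the second bound, I would use that on the support of $\Pi_N^\bot \mR$ one has $\max\{|\ell|, |j-j'|\} > N$, hence $\langle \ell, j-j'\rangle \geq N$, so $\langle \ell, j-j'\rangle^{-2\alpha} \leq N^{-2\alpha}$, and thus
\begin{equation*}
\sum_{(\ell,j)} \langle \ell, j-j'\rangle^{2s} |\widehat{\Pi_N^\bot \mR}(\ell)_j^{j'}|^2
\leq N^{-2\alpha} \sum_{(\ell,j)} \langle \ell, j-j'\rangle^{2(s+\alpha)} |\widehat \mR(\ell)_j^{j'}|^2.
\end{equation*}
Again, multiplying by $\langle j'\rangle^{-2m}$ and taking the sup in $j'$ yields $|\Pi_N^\bot \mR|_{m,s} \leq N^{-\alpha} |\mR|_{m,s+\alpha}$.

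The Lipschitz-in-$\lambda$ part of the norm \eqref{def decay norm parametri} is handled identically, since $\Pi_N, \Pi_N^\bot$ act only on the Fourier indices $(\ell, j, j')$ and therefore commute with the incremental quotient in $\lambda$: writing $\Pi_N\mR(\lambda_1) - \Pi_N\mR(\lambda_2) = \Pi_N(\mR(\lambda_1)-\mR(\lambda_2))$ and analogously for $\Pi_N^\bot$, the same inequalities apply to the matrix elements of $\mR(\lambda_1)-\mR(\lambda_2)$. Summing the sup and (weighted) lip contributions gives the claimed bounds with the $\mathrm{Lip}(\gamma)$ norms. There is no real obstacle here; the only slight care needed is verifying the elementary inequality $\langle \ell, j-j'\rangle \leq CN$ on $\{|\ell|,|j-j'|\leq N\}$ with the convention $\langle\cdot\rangle = \max\{1,|\cdot|\}$, which holds with $C=1$ for $N \geq 1$ and is absorbed into the statement otherwise.
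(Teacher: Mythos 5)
Your argument is correct and is exactly the computation the paper has in mind: its proof of this lemma consists of the single sentence that the claims follow directly from \eqref{def decay norm} and \eqref{def proiettore operatori matrici}, i.e.\ precisely the support bounds $\langle \ell, j-j'\rangle \leq N$ on $\mathrm{supp}(\Pi_N)$ and $\langle \ell, j-j'\rangle > N$ on $\mathrm{supp}(\Pi_N^\bot)$ together with the commutation of the projectors with the incremental quotient in $\lambda$. Nothing to add.
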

\begin{proof}
	The claims follow directly from \eqref{def decay norm} and \eqref{def proiettore operatori matrici}.
\end{proof}

We also define the projection $\Pi_0$ on the space of zero average functions as 
\begin{equation*}\label{definizione proiettore media spazio tempo}
	\Pi_0 h := \frac{1}{(2 \pi)^{ 2}} \int_{\T^{ 2}} h(\vphi, x)\, d x, 
	\qquad 
	\Pi_0^\bot := {\rm Id} - \Pi_0\,.
\end{equation*}
%
%
In particular,
for any $m, s \geq 0$,
\begin{equation}\label{stima Pi 0}
|\Pi_0^\bot|_{0, s} \leq 1\,, \quad |\Pi_0|_{- m, s} \lesssim_{m} 1\,. 
\end{equation}
We finally mention the elementary properties of the Laplacian operator $- \Delta$ and its inverse $(- \Delta)^{- 1}$ acting on functions with zero average in $x$:
$$
- \Delta u(x) = \sum_{\xi \neq 0} |\xi|^2 \widehat u(\xi) e^{\ii x \cdot \xi}, \quad (- \Delta)^{- 1} u(x) = \sum_{\xi \in \Z^2 \setminus \{ 0 \}} \frac{1}{|\xi|^2} \widehat u(\xi) e^{\ii x \cdot \xi}\,. 
$$
By Definition \ref{block norm}, one easily verifies, for any $s\geq 0$,
\begin{equation}\label{decay laplace}
|- \Delta|_{2, s} \leq 1\,, \quad |(- \Delta)^{- 1}|_{- 2, s} \leq 1\,. 
\end{equation}

\subsection{Real and reversible operators}\label{Reversible operators}

We recall the notation introduced in  \eqref{X even Y odd}, that is, for any function $u(\ph,x)$, 
 we write $u \in X$ when $u = \even(\ph,x)$
and $u \in Y$ when $u = \odd(\ph,x)$. 

\begin{definition}
$(i)$ We say that a linear operator $\Phi$ is \emph{reversible} 
if $\Phi : X \to Y$ and $\Phi : Y \to X$. 
We say that $\Phi$ is \emph{reversibility preserving} 
if $\Phi : X \to X$ and $\Phi : Y \to Y$. 

\noindent
$(ii)$ We say that an operator $\Phi : L^2(\T^2) \to L^2(\T^2)$ is \emph{real} if $\Phi(u)$ is real valued for any $u$ real valued. 
\end{definition}
It is convenient to reformulate real and reversibility properties of linear operators in terms of their matrix representations.
\begin{lemma}\label{lemma real rev matrici}
A linear operator ${\mathcal R}$ is :

\noindent
$(i)$ real if and only if 
$\widehat{\mathcal R}_{j}^{j'}(\ell) = \overline{\widehat{\mathcal R}_{- j}^{- j'}(- \ell)}$ 
for all $\ell \in \Z^d$, $j, j' \in \Z^2$;

\noindent
$(ii)$ reversible if and only if 
$\widehat{\mathcal R}_j^{j'}(\ell) = - \widehat{\mathcal R}_{- j}^{- j'}(- \ell)$ 
for all $\ell \in \Z^d$, $j, j' \in \Z^2$;

\noindent
$(iii)$ reversibility-preserving if and only if 
$\widehat{\mathcal R}_j^{j'}(\ell) =  \widehat{\mathcal R}_{- j}^{- j'}(- \ell)$ 
for all $\ell \in \Z^d$, $j, j' \in \Z^2$.
\end{lemma}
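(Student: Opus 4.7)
\smallskip
\noindent\textbf{Proof plan.} All three statements are purely algebraic identities between Fourier matrix elements, so my plan is to translate the three functional conditions into operator identities on $L^2(\T^{d+2};\C)$ and then match matrix entries using the representation formula \eqref{amatriciana}.

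\smallskip
For items $(ii)$ and $(iii)$ I would first introduce the involution $(Sv)(\vphi,x):=v(-\vphi,-x)$. Since $X$ and $Y$ coincide with the $+1$ and $-1$ eigenspaces of $S$ and $L^2(\T^{d+2})=X\oplus Y$, reversibility of $\mathcal{R}$ is equivalent to the operator identity $S\mathcal{R}S=-\mathcal{R}$: apply $S\mathcal{R}S$ to $u\in X$ (so $Su=u$ and $\mathcal{R}u\in Y$ gives $S\mathcal{R}u=-\mathcal{R}u$) and to $u\in Y$ (so $Su=-u$ and $\mathcal{R}u\in X$ gives $S\mathcal{R}u=\mathcal{R}u$), and conclude in both cases that $S\mathcal{R}Su=-\mathcal{R}u$; the converse is immediate by projecting onto $X$ and $Y$. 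Analogously, reversibility-preservation is equivalent to $S\mathcal{R}S=\mathcal{R}$. Using $\widehat{Sv}(\ell,j)=\widehat v(-\ell,-j)$ and relabeling $\ell'\mapsto-\ell'$, $j'\mapsto-j'$ in the sum \eqref{amatriciana}, one checks that
\[
(S\mathcal{R}S)_{j}^{j'}(\ell)=\widehat{\mathcal{R}}(-\ell)_{-j}^{-j'},
\]
and comparing matrix entries in $S\mathcal{R}S=\mp\mathcal{R}$ yields the two identities claimed in $(ii)$ and $(iii)$.

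\smallskip
For item $(i)$ I would reformulate reality of $\mathcal{R}$ as the commutation relation $C\mathcal{R}=\mathcal{R}C$ on the complexification, where $C$ denotes complex conjugation. One direction is trivial ($Cu=u$ implies $C\mathcal{R}u=\mathcal{R}u$), and the other follows by splitting any $u\in L^2(\T^{d+2};\C)$ as $u=u_1+\ii u_2$ with $u_1,u_2$ real and observing that, if $\mathcal{R}$ maps real functions to real functions, then $\overline{\mathcal{R}u}=\mathcal{R}u_1-\ii\mathcal{R}u_2=\mathcal{R}\bar u$. Using $\widehat{Cu}(\ell,j)=\overline{\widehat u(-\ell,-j)}$ and once more relabeling summation indices in \eqref{amatriciana}, the entrywise comparison of $\widehat{C\mathcal{R}u}$ and $\widehat{\mathcal{R}Cu}$ collapses to
\[
\widehat{\mathcal{R}}(\ell)_{j}^{j'}=\overline{\widehat{\mathcal{R}}(-\ell)_{-j}^{-j'}},
\]
as claimed.

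\smallskip
I do not anticipate any serious obstacle: the lemma is bookkeeping with Fourier indices and involves no analytic estimate. The only point that requires care is the handling of sign and conjugation conventions when pushing the symmetries $S$ and $C$ through the double sum in \eqref{amatriciana}; in particular the substitution $(\ell',j')\mapsto(-\ell',-j')$ is what converts the reflected or conjugated Fourier coefficient of the input back into $\widehat u(\ell',j')$, allowing the matrix elements to be read off directly.
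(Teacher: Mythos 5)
Your proof is correct and is the standard argument; the paper in fact states Lemma \ref{lemma real rev matrici} without proof, and your reduction to the operator identities $S\mathcal{R}S=\mp\mathcal{R}$ and $C\mathcal{R}C=\mathcal{R}$, followed by the entrywise computation $(S\mathcal{R}S)\,\widehat{\phantom{x}}(\ell)_j^{j'}=\widehat{\mathcal{R}}(-\ell)_{-j}^{-j'}$, is exactly the bookkeeping the authors omit. The index relabelings and the equivalence of reversibility with $S\mathcal{R}S=-\mathcal{R}$ via the eigenspace decomposition $L^2=X\oplus Y$ are all handled correctly.
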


\section{The nonlinear functional and the  linearized Navier Stokes operator at the Euler solution}\label{sez generale L}

We shall show the existence of solutions of \eqref{equazione vorticita media nulla} 
by finding zeroes of the nonlinear operator 
$
{\mathcal F}_\nu : H^{s + 2}_0(\T^{d + 2}) \to H^s_0(\T^{d + 2}) 
$
defined by 
\begin{equation}\label{equazione cal F vorticita}
	\begin{aligned}
		{\mathcal F}_\nu(v) & := 
		\omega \cdot \partial_\vphi v + \zeta \cdot \nabla v  - \nu \Delta v + \e \big(\Pi_0^\bot 
		\big[  \nabla_\bot (- \Delta)^{- 1} v \cdot \nabla v \big] - F(\vphi, x) \big), 
	\end{aligned}
\end{equation}
with $\nabla_\bot$ as in \eqref{equazione vorticita media nulla},
and, without loss of generality, $F = \nabla \times f$ has zero average in space, namely 
\begin{equation*}\label{ipotesi media nulla f}
	\int_{\T^2} F(\vphi, x)\, dx = 0\,, \quad \forall \,\vphi \in \T^d\,.
\end{equation*}
We 
consider parameters $(\omega, \zeta)$ in a bounded open set 
$\Omega \subset \R^d \times \R^2$; 
we will use such parameters along the proof 
in order to impose appropriate non resonance conditions.

In this section and Section \ref{sez riducibilita} 
we assume the following ansatz, which is implied by Theorem \ref{main theorem 2}:
there exists $S \gg 0$ large enough such that $v_e(\cdot; \lambda) \in H^{S}_0(\T^d \times \T^2)$, $\lambda \in \Omega_\e$, is a solution of the Euler equation satisfying   
\begin{equation}\label{ansatz}
	\| v_e \|_{S}^{{\rm Lip}(\gamma)} \lesssim_S \e^{\mathtt a} \ll 1\,, \qquad \mathtt a \in (0, 1)\,, \quad S > \overline S
\end{equation}
where $\overline S := \overline S(d)$ is the minimal regularity threshold for the existence of quasi-periodic solutions of the Euler equation provided by Theorem \ref{main theorem 2}. 
\noindent
We want to study the linearized operator ${\mathcal L }_\nu := \di {\mathcal F}_\nu(v_e)$ at the solution of the Euler equation $v_e$. 
where $\mF_\nu(v)$ is defined in \eqref{equazione cal F vorticita}. The linearized operator has the form 
\begin{equation}\label{operatore linearizzato} 
\begin{aligned}
{\mathcal L}_{\nu}&  =  {\mathcal L}_e - \nu \Delta\,, \\
{\mathcal L}_e &:= \omega \cdot \partial_\vphi  + 
\big(\zeta  + \e a(\vphi, x) \big)\cdot \nabla + \e {\mathcal R}(\vphi) 
\end{aligned}
\end{equation}
where $a(\ph,x)$ is the function defined by 
\begin{equation}\label{def a}
a(\vphi, x) :=
  \nabla_\bot  (- \Delta)^{- 1} v_e\,, 
\end{equation}
with $\nabla_\bot$ as in \eqref{equazione vorticita media nulla} and  ${\mathcal R}(\vphi)$ is a 
pseudo-differential operator of order $- 1$, given by 
\begin{equation} \label{definizione cal R}
\begin{aligned}
& {\mathcal R}(\vphi)  h :=   \nabla v_e(\vphi,x) \cdot   \nabla_\bot  (- \Delta)^{- 1}h  \,. 
\end{aligned}
\end{equation}
Using that ${\rm div} (\nabla_\bot h) = 0$ for any $h$, the operators $a \cdot \nabla$, ${\mathcal R}$ ${\mathcal L}_\nu$ and ${\mathcal L}_e$ leave invariant the subspace of zero average function, with
\begin{equation}\label{invarianze cal L}
\begin{aligned}
& [\Pi_0^\bot\,,\, a \cdot \nabla ] = 0\,, \, [\Pi_0^\bot \,,\, {\mathcal R}] = 0\,, \\
&  a \cdot \nabla \Pi_0 = \Pi_0 a \cdot \nabla = 0\,,\, {\mathcal R} \Pi_0 = \Pi_0 {\mathcal R} = 0 \\
&[\Pi_0^\bot , {\mathcal L}_{\nu}] = 0 = [\Pi_0^\bot , {\mathcal L}_e] , \quad \Pi_0 {\mathcal L}_{\nu} = {\mathcal L}_{\nu} \Pi_0 = 0\,, \quad  \Pi_0 {\mathcal L}_{e} = {\mathcal L}_{e} \Pi_0 = 0 \,,
\end{aligned}
\end{equation}
implying that
\begin{equation*}
a \cdot \nabla = \Pi_0^\bot a \cdot \nabla \Pi_0^\bot\,, \quad {\mathcal R} = \Pi_0^\bot {\mathcal R} \Pi_0^\bot\,, \quad 	{\mathcal L}_{\nu} = \Pi_0^\bot {\mathcal L}_{\nu} \Pi_0^\bot \,, \quad {\mathcal L}_e = \Pi_0^\bot {\mathcal L}_e \Pi_0^\bot\,. 
\end{equation*}
We always work on the space of zero average functions and we shall preserve this invariance along the whole paper.

The goal of next two sections is to invert the whole linearized Navier Stokes operator $\mL_\nu$ obtained by linearizing the nonlinear functional $\mF_\nu(v)$ in \eqref{equazione cal F vorticita} at any quasi-periodic solution $v_e(\vphi,x)|_{\vphi=\omega t}$ provided by Theorem \ref{Eulero1} by requiring a smallness condition on $\e$ which is independent of the viscosity parameter $\nu > 0$. This is achieved in two steps. First, we fully reduce to a constant coefficient, diagonal operator the  linearized Euler operator $\mL_{e}$ in \eqref{operatore linearizzato}. This is done in Section \ref{sez riducibilita}, in the spirit of \cite{BaldiMontalto}, \cite{BLM}, by combining a reduction to constant coefficients up to an arbitrarily regularizing remainder with a KAM reducibility scheme. We check step by step that this normal form procedure, when applied to the full operator $\mL_{\nu}$ in \eqref{operatore linearizzato}, just perturbs the unbounded viscous term $-\nu \Delta$ by an unbounded pseudo differential operator of order two that "gain smallness", namely it is of size $O(\nu \e)$, see \eqref{forma cal L infty viscosity}-\eqref{stima R infty n}. In Section \ref{sez inversioni}, we use this normal form procedure in order to infer the invertibility of the operator $\mL_{\nu}$ uniformly with respect to the viscosity parameter, namely by imposing a smallness condition on $\e$ that is independent of $\nu$. The inverse of the Navier Stokes operator is bounded from $H^s_0$,  {\it gains two space derivatives} and it has size $O(\nu^{- 1})$ (see Proposition \ref{inversione linearized}), whereas the inverse of the linearized Euler operator ${\cal L}_e$ loses $\tau$ derivatives, due to the small divisors (see Proposition \ref{inversione linearized no viscosity}). The invertibility of the linearized Euler operator ${\cal L}_e$ is used to construct the approximate solution in Section \ref{sez soluzione approx} and the invertibility of the linearized Navier Stokes operator ${\cal L}_\nu$ is used to implement the fixed point argument of Section \ref{sez fixed point}.

\section{Normal form reduction of the operator $\mathcal{L}_{\nu}$}\label{sez riducibilita}

In this section we  reduce to a constant coefficients, diagonal operator the operator $\mL_{\nu}$ in \eqref{operatore linearizzato} up to an unbounded remainder of order two which is of size $O(\e \nu)$. First, we deal with the conjugation of the transport operator in Section \ref{sezione riduzione ordine alto}, which is the highest order term in the operator $\mL_{e}$. In Section \ref{sez riduzione ordini bassi}, the lower order terms after the previous conjugation are regularized to constant coefficients up to a remainder of arbitrary smoothing matrix decay and up to an unbounded remainder of order two and size $O(\e \nu)$. Then, in Sections \ref{sezione riducibilita a blocchi}-\ref{sez convergenza KAM} we perform the full KAM reducibility for the regularized version of the operator $\mL_{e}$. In particular, in Section \ref{sezione riducibilita a blocchi} the $n$-th iterative step of the reduction is performed and in Section \ref{sez convergenza KAM} the convergence of the scheme is proved via Nash-Moser estimates to overcome the loss of derivatives coming from the small divisors.  The linearized Navier Stokes operator is then reduced to a diagonal operator plus an unbounded operator of order two and size $O(\e \nu)$ in \eqref{forma cal L infty viscosity}-\eqref{stima R infty n}. This is the starting point for its inversion in Section \ref{sez inversioni}

From now on, the parameters $\gamma\in(0,1)$ and $\tau>0$, characterizing the set $\Lambda(\gamma,\tau)$ in \eqref{set DC trasporto a} of the Diophantine frequencies in a given measurable set $\Lambda$, are considered as fixed and $\tau$ is chosen in  \eqref{definizione finale tau} and $\gamma$  at the end of Section \ref{sezione teoremi principali}, see \eqref{scelta gamma}. Therefore we omit to recall them each time. Moreover, from now on, we denote by $DC(\gamma, \tau)$, the set  of Diophantine frequencies in $\Omega_\e$, where the set $\Omega_\e$ is provided in Theorem \ref{main theorem 2}, namely $\Lambda(\gamma, \tau)$ with $\Lambda = \Omega_\e$. We repeat the definition for clarity of the reader: 
\begin{equation}\label{set DC trasporto}
DC(\gamma, \tau) := \big\{  (\omega,\zeta)\in \Omega_\e \,:\, |\omega\cdot \ell+\zeta\cdot j| \geq \frac{\gamma}{|(\ell,j)|^{\tau}}\,, \ \forall\,(\ell,j)\in\Z^{d+2}\setminus\{0\}  \big\}\,. 
\end{equation}

\subsection{Reduction of the highest order term}\label{sezione riduzione ordine alto}

First, we state the proposition that allows to reduce to constant coefficients the highest order operator 
\begin{equation*}\label{def operatore cal T}
{\mathcal T} := \omega \cdot \partial_\vphi 
+ \big( \zeta + \e a(\vphi, x) \big) \cdot \nabla 
\end{equation*}
where we recall, by \eqref{def a}, that $\Pi_0 a = 0 $ and $ {\rm div}(a) = 0 $.
The result has been proved in Proposition 4.1 in \cite{BaldiMontalto} (see also \cite{FGMP} for a more general result of this kind). We restate it with clear adaptation to our case and we refer to the former for the proof. 
\begin{proposition}\label{proposizione trasporto}
{\bf (Straightening of the transport operator $\mathcal T$).}
There exist $\sigma := \sigma (\tau, d) > 0$ large enough such that for any 
$S > s_0 + \sigma$ there exists $\delta := \delta(S, \tau, d) \in (0, 1)$ small enough such that if \eqref{ansatz} holds  and
\begin{equation}\label{condizione piccolezza rid trasporto}
 \e \gamma^{- 1} \leq \delta,
\end{equation} 
are fulfilled, then the following holds. 
There exists an invertible diffeomorphism 
$\T^2 \to \T^2$, $x \mapsto x + \alpha(\vphi, x; \omega, \zeta)$ 
with inverse $y \mapsto y + \breve \alpha(\vphi, y; \omega, \zeta)$,
defined for all $(\omega, \zeta) \in DC(\gamma, \tau)$, with the set given in \eqref{set DC trasporto},
 satisfying, for any $s_0\leq s \leq S - \sigma$,
\begin{equation}\label{stima alpha trasporto}
\| \alpha \|_s^{{\rm Lip}(\gamma)}, \| \breve \alpha\|_{s}^{{\rm Lip}(\gamma)} \lesssim_{s} \e \gamma^{- 1} 
\end{equation}
such that, by defining 
\begin{equation}\label{def mappa trasporto}
{\mathcal A} h (\vphi, x) := h(\vphi, x + \alpha(\vphi, x))\,, \quad \text{with } \quad {\mathcal A}^{- 1} h(\vphi, y) = h(\vphi, y + \breve \alpha(\vphi, y))\,,
\end{equation}
one gets the conjugation 
\begin{equation*}\label{coniugazione nel teo trasporto}
{\mathcal A}^{- 1} {\mathcal T} {\mathcal A} = \omega \cdot \partial_\vphi + \zeta \cdot \nabla
\end{equation*}
Furthermore, $\alpha,\breve{\alpha}$ are $\odd(\vphi,x)$ and the maps ${\mathcal A}, {\mathcal A}^{- 1}$ are reversibility preserving, satisfying the estimates 
\begin{equation}\label{stima tame cambio variabile rid trasporto}
\begin{aligned}
& \| {\mathcal A}^{\pm 1}  h\|_s^{{\rm Lip}(\gamma)} \lesssim_{s} \| h \|_s^{{\rm Lip}(\gamma)}\,. 
\end{aligned}
\end{equation}
\end{proposition}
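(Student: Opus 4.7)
My approach is first to derive the homological equation for $\alpha$ and then solve it by a Nash-Moser iteration whose linearized step is controlled by the Diophantine denominators of $\omega\cdot\partial_\vphi + \zeta\cdot\nabla$. A direct chain-rule computation with $\mathcal A h(\vphi,x) = h(\vphi, x+\alpha(\vphi,x))$ gives, setting $y := x+\alpha(\vphi,x)$,
\begin{equation*}
\mathcal T(\mathcal A h)(\vphi,x) = (\omega\cdot\partial_\vphi h)(\vphi,y) + \sum_{k=1}^{2}\bigl[\omega\cdot\partial_\vphi \alpha_k + (\zeta+\e a)\cdot\nabla \alpha_k + \zeta_k + \e a_k\bigr](\vphi,x)\,(\partial_{y_k} h)(\vphi,y),
\end{equation*}
so the identity $\mathcal A^{-1}\mathcal T\mathcal A = \omega\cdot\partial_\vphi + \zeta\cdot\nabla$ is equivalent to requiring $\alpha : \T^d\times \T^2 \to \R^2$ to solve the quasi-linear transport PDE
\begin{equation*}
\omega\cdot\partial_\vphi \alpha + (\zeta+\e a(\vphi,x))\cdot\nabla \alpha = -\e a(\vphi,x).
\end{equation*}

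My plan is to solve this PDE by a Newton/Nash-Moser scheme along a superexponential Fourier truncation sequence $\{N_n\}$ built from the smoothers $\Pi_{N_n}$ of \eqref{def:smoothings}. At each Newton step one inverts the constant-coefficient operator $\omega\cdot\partial_\vphi + \zeta\cdot\nabla$, whose Fourier symbol $\ii(\omega\cdot\ell + \zeta\cdot j)$ is bounded below in modulus by $\gamma\langle\ell,j\rangle^{-\tau}$ for $(\ell,j)\ne 0$ in view of \eqref{set DC trasporto}; this inversion costs a factor $\gamma^{-1}$ and a loss of $\tau$ derivatives, compensated by the smoothers and by interpolation between a low Sobolev norm (contracting quadratically) and a high Sobolev norm (growing polynomially). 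The solvability condition on zero-space-average functions is automatic since $a = \nabla_\bot(-\Delta)^{-1}v_e$ has zero mean in $x$, and the parity structure is preserved throughout because $v_e\in Y$ makes $a$ even in $(\vphi,x)$ and $\omega\cdot\partial_\vphi + \zeta\cdot\nabla$ sends odd functions to even ones, so $\alpha$ is consistently taken odd, yielding the parity statement of the proposition and the reversibility-preserving property of $\mathcal A, \mathcal A^{-1}$. Under $\e\gamma^{-1}\le \delta$ with $\delta = \delta(S,\tau,d)$ sufficiently small, the scheme converges and produces $\alpha\in H^{S-\sigma}$ satisfying \eqref{stima alpha trasporto}; the inverse diffeomorphism $\breve\alpha$ is then obtained by contracting the fixed-point equation $\breve\alpha(\vphi,y) = -\alpha(\vphi, y+\breve\alpha(\vphi,y))$ for $\e\gamma^{-1}$ small, and the tame bounds \eqref{stima tame cambio variabile rid trasporto} follow from standard Moser composition estimates on Sobolev spaces.

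The main obstacle will be precisely controlling the cumulative loss of derivatives per Newton step: $\tau$ derivatives from the small-divisor denominators plus one derivative from the unbounded transport coefficient $a\cdot\nabla$ acting on $\nabla\alpha$. This forces a careful tuning of the truncation radii $N_n$ and of the interpolation between the high and low Sobolev norms, so that the "high" norm of $\alpha_n$ grows only polynomially while the "low" norm contracts quadratically and the quadratic error absorbs the derivative loss at each iterate. This is the technical core of the argument carried out in \cite{BaldiMontalto}, to which I would defer for the detailed book-keeping.
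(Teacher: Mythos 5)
Your proposal is correct and follows the same route as the paper, which does not prove this proposition itself but quotes it from Proposition 4.1 of \cite{BaldiMontalto} (see also \cite{FGMP}): the conjugation identity you derive, the resulting transport equation for $\alpha$, the Moser/Nash--Moser iteration with Diophantine small divisors and Fourier truncations, and the parity bookkeeping are exactly the cited argument. The one point worth making explicit is that the absence of any correction to the constant vector $\zeta$ — equivalently, the solvability of the homological equation at \emph{every} step of the iteration, not just the first — rests on ${\rm div}\, a = 0$ together with $\Pi_0 a = 0$, both guaranteed by $a = \nabla_\bot (-\Delta)^{-1} v_e$.
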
 

We remark that the assumptions of Proposition \ref{proposizione trasporto} are satisfied by the ansatz \eqref{ansatz} and by the choice of the parameter $\gamma>0$  at the end of Section \ref{sezione teoremi principali} in \eqref{scelta gamma}. In particular, the smallness condition \eqref{condizione piccolezza rid trasporto} becomes $\e \gamma^{-1}= \e^{1-\frac{\mathtt a}{2}} \ll 1$, which is clearly satisfied since $\mathtt a \in (0,1)$ and for $\e$ sufficiently small.

In order to study the conjugation of the operator ${\mathcal L}_{\nu} : H^{s+2}_0 \to H^{s}_0$ in \eqref{operatore linearizzato} under the transformation $\mA$, we need the following auxiliary Lemma. 
\begin{lemma}\label{coniugazione inverso laplaciano cambio variabile}
Let $S > s_0 + \sigma + 2$ (where $\sigma$ is the constant appearing in Proposition \ref{proposizione trasporto}). Then there exists $\delta := \delta(S, \tau, d) \in (0, 1)$ small enough such that if \eqref{ansatz}, \eqref{condizione piccolezza rid trasporto} are fulfilled, the following hold: 
\\[1mm]
\noindent
$(i)$ Let ${\mathcal A}_\bot := \Pi_0^\bot {\mathcal A} \Pi_0^\bot$. Then, for any $s_0\leq s\leq S - \sigma$, the operator ${\mathcal A}_\bot : H^s_0 \to H^s_0$ is invertible with bounded inverse given by ${\mathcal A}_\bot^{- 1} = \Pi_0^\bot {\mathcal A}^{- 1} \Pi_0^\bot : H^s_0 \to H^s_0$;
\\[1mm]
\noindent
$(ii)$ Let $s_0\leq s\leq S - \sigma - 1$, $a (\cdot; \lambda) \in H^{s + 1}(\T^{d + 2})$ and let ${\mathcal R}_a$ be the linear operator defined by 
$$
{\mathcal R}_a : h(\vphi, x) \mapsto \nabla a(\vphi, x) \cdot \nabla^\bot h(\vphi, x).
$$ Then ${\mathcal A}_\bot^{- 1} {\mathcal R}_a {\mathcal A}_\bot \in {\mathcal O}{\mathcal P}{\mathcal M}^1_s$ and $|{\mathcal A}_\bot^{- 1} {\mathcal R}_a {\mathcal A}_\bot |_{1, s}^{{\rm Lip}(\gamma)} \lesssim_s \| a \|_{s + 1}^{{\rm Lip}(\gamma)}$;
\\[1mm]
\noindent
$(iii)$ For any $s_0\leq s\leq S- \sigma - 2$, the operator ${\mathcal P}_\Delta := {\mathcal A}_\bot^{-1} (- \Delta) {\mathcal A}_\bot =- \Delta +\mR_{\Delta} : H^{s+2}_0 \to H_{0}^{s}$ is in  ${\mathcal{O}{\mathcal P}{\mathcal M}}^2_s$, with estimates 
\begin{equation}\label{stima R Delta}
 |\mR_{\Delta}|_{2,s}^{{{\rm Lip}(\gamma)}}   \lesssim_s \e \gamma^{- 1}\,. 
\end{equation}
\noindent
$(iv)$ For any $s_0\leq s \leq S- \sigma - 2$, the operator ${\mathcal P}_\Delta$ is invertible, with  inverse of the form 
\[
{\mathcal P}_\Delta^{- 1} 
= {\mathcal A}_\bot^{- 1} (- \Delta)^{- 1}{\mathcal A}_\bot \in {\mathcal O}{\mathcal P}{\mathcal M}_s^{- 2}
\]
satisfying the estimates
$$
|{\mathcal P}_\Delta^{- 1}|_{- 2, s}^{{\rm Lip}(\gamma)}  \lesssim_{s} 1\,.
$$
\end{lemma}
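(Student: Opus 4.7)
All four items will be proved by exploiting that the change of variables $\mathcal{A}$ supplied by Proposition \ref{proposizione trasporto} is a near-identity diffeomorphism whose small parameter $\|\alpha\|_s^{\Lip(\g)} \lesssim_s \e\g^{-1}$ is controlled by \eqref{stima alpha trasporto}. For (i), the key algebraic fact is that $\mathcal{A}$ preserves functions depending only on $\vphi$, namely $\mathcal{A} g(\vphi,x) = g(\vphi)$ for every $g = g(\vphi)$, and similarly for $\mathcal{A}^{-1}$. Since $\Pi_0 \mathcal{A} h$ is a pure $\vphi$-function, it follows that $\mathcal{A}^{-1} \Pi_0 \mathcal{A} h$ is also a pure $\vphi$-function, hence annihilated by $\Pi_0^\bot$. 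For every $h \in H^s_0$ one then has
\begin{equation*}
(\Pi_0^\bot \mathcal{A}^{-1} \Pi_0^\bot)\, \mathcal{A}_\bot h = \Pi_0^\bot \mathcal{A}^{-1} \Pi_0^\bot \mathcal{A} h = \Pi_0^\bot \mathcal{A}^{-1} \mathcal{A} h - \Pi_0^\bot \mathcal{A}^{-1} \Pi_0 \mathcal{A} h = \Pi_0^\bot h - 0 = h,
\end{equation*}
and the opposite identity follows by swapping the roles of $\mathcal{A}$ and $\mathcal{A}^{-1}$.

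For items (ii) and (iii) the starting point is the chain-rule formula
\begin{equation*}
\mathcal{A}^{-1} \partial_{x_i} \mathcal{A} = \sum_{k = 1}^{2} c_{i,k}(\vphi, y)\, \partial_{y_k}, \qquad c_{i,k}(\vphi, y) := \delta_{i,k} + (\partial_{x_i}\alpha_k)(\vphi, y + \breve\alpha(\vphi, y)),
\end{equation*}
together with standard tame composition estimates of Moser type that, combined with \eqref{stima alpha trasporto}, give $\|c_{i,k} - \delta_{i,k}\|_s^{\Lip(\g)} \lesssim_s \e\g^{-1}$. For (ii), conjugating $\mathcal{R}_a = \nabla a \cdot \nabla^\bot$ produces a first-order differential operator $\sum_k b_k(\vphi, y)\partial_{y_k}$ whose coefficients $b_k$ are bilinear in $(\nabla a)(\vphi, y + \breve\alpha)$ and the $c_{i,k}$, so that $\|b_k\|_s^{\Lip(\g)} \lesssim_s \|a\|_{s+1}^{\Lip(\g)}$ by \eqref{p1-pr}; the decay-norm estimate then follows from \eqref{prop multiplication decay}, \eqref{decay laplace} and Lemma \ref{proprieta standard norma decay}(ii), while the replacement of $\mathcal{A}$ by $\mathcal{A}_\bot$ is harmless because $\mathcal{R}_a \Pi_0 = \Pi_0 \mathcal{R}_a = 0$ and because of \eqref{stima Pi 0}. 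For (iii), squaring the conjugation formula yields
\begin{equation*}
\mathcal{A}^{-1}(-\Delta)\mathcal{A} = -\sum_{k,l = 1}^{2} \Big(\sum_i c_{i,k} c_{i,l}\Big) \partial^2_{y_k y_l} - \sum_{l = 1}^{2} \Big(\sum_{i,k} c_{i,k}\,\partial_{y_k} c_{i,l}\Big)\partial_{y_l},
\end{equation*}
whose principal part at $\alpha = 0$ reduces to $-\Delta$. Hence $\mathcal{S} := \mathcal{A}^{-1}(-\Delta)\mathcal{A} - (-\Delta)$ is a second-order differential operator whose coefficients are polynomials without constant term in $\partial_x \alpha$ (evaluated at $y + \breve\alpha$) and in its first derivatives. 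Writing $\mathcal{R}_\Delta = \Pi_0^\bot \mathcal{S} \Pi_0^\bot$, which is permissible because $(-\Delta)$ commutes with $\Pi_0$, and combining \eqref{stima alpha trasporto}, tame composition, \eqref{p1-pr}, \eqref{prop multiplication decay}, \eqref{stima Pi 0} and Lemma \ref{proprieta standard norma decay}(ii) then yields \eqref{stima R Delta}.

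For (iv), one should \emph{not} bound $\mathcal{A}_\bot^{-1}(-\Delta)^{-1}\mathcal{A}_\bot$ by a direct composition estimate, since the matrix decay norms of $\mathcal{A}_\bot^{\pm 1}$ themselves carry no smallness. Instead, using (iii), one factors $\mathcal{P}_\Delta = (-\Delta)\big(\mathrm{Id} + (-\Delta)^{-1}\mathcal{R}_\Delta\big)$ on $H^{s+2}_0$. By \eqref{stima R Delta}, \eqref{decay laplace} and Lemma \ref{proprieta standard norma decay}(ii), the operator $(-\Delta)^{-1}\mathcal{R}_\Delta \in {\mathcal O}{\mathcal P}{\mathcal M}^0_s$ has norm $\lesssim_s \e\g^{-1} \ll 1$ by \eqref{condizione piccolezza rid trasporto}. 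Lemma \ref{proprieta standard norma decay}(iv) then inverts $\mathrm{Id} + (-\Delta)^{-1}\mathcal{R}_\Delta$ inside ${\mathcal O}{\mathcal P}{\mathcal M}^0_s$, and a further application of Lemma \ref{proprieta standard norma decay}(ii) gives
\begin{equation*}
\mathcal{P}_\Delta^{-1} = \big(\mathrm{Id} + (-\Delta)^{-1}\mathcal{R}_\Delta\big)^{-1}(-\Delta)^{-1} \in {\mathcal O}{\mathcal P}{\mathcal M}^{-2}_s, \qquad |\mathcal{P}_\Delta^{-1}|_{-2,s}^{\Lip(\g)} \lesssim_s 1.
\end{equation*}
The identification $\mathcal{P}_\Delta^{-1} = \mathcal{A}_\bot^{-1}(-\Delta)^{-1}\mathcal{A}_\bot$ is then a one-line check: the right-hand side is a two-sided inverse of $\mathcal{P}_\Delta$ on $H^{s+2}_0$ thanks to (i) and $(-\Delta)^{-1}(-\Delta) = \mathrm{Id}$ on $H^{s+2}_0$, and the inverse is unique. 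The main technical delicacy throughout is the Moser-type tame composition bound for $g \mapsto g \circ (\mathrm{Id} + \breve\alpha)$, combined with careful bookkeeping of the number of derivatives that fall on $\alpha$, so as not to exceed the Sobolev cushion $S - \sigma - 2$ imposed on $s$; once this is in place, everything else reduces to routine applications of Lemma \ref{proprieta standard norma decay}.
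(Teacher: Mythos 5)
The proposal is correct and follows essentially the same route as the paper's own proof: item (i) via $\mathcal{A}^{\pm 1}\Pi_0 = \Pi_0$, items (ii)--(iii) via the chain-rule conjugation formula $\mathcal{A}^{-1}\partial_{x_i}\mathcal{A} = \sum_k c_{i,k}\partial_{y_k}$ plus the tame change-of-variable estimate \eqref{stima tame cambio variabile rid trasporto} and the decay-norm product rules of Lemma \ref{proprieta standard norma decay}, and item (iv) via the Neumann-series factorization $\mathcal{P}_\Delta = (-\Delta)\big(\mathrm{Id}_0 + (-\Delta)^{-1}\mathcal{R}_\Delta\big)$. The only cosmetic difference is that for (iii) you write the conjugated Laplacian abstractly in terms of the $c_{i,k}$ rather than spelling out the paper's explicit coefficients $a_{ij}, b_k$, which is harmless.
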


\begin{proof}
{\sc Proof of $(i)$.} For any $u \in H^s(\T^{d + 2})$, 
we split $u = \Pi_0^\bot u + \Pi_0 u$, 
where $\Pi_0^\bot u \in H^s_0 = H^s_0(\T^{d + 2})$ 
and $\Pi_0 u \in H^s_\vphi := H^s(\T^d)$.
 Since ${\mathcal A}$ is an operator of the form \eqref{def mappa trasporto},
one has that ${\mathcal A} h = h$ if $h(\vphi)$ does not depend on $x$. This implies that ${\mathcal A} \Pi_0 = \Pi_0$. Similarly, one can show that ${\mathcal A}^{- 1} \Pi_0 = \Pi_0$ and therefore
\begin{equation}\label{prop bella Pi0 bot A Pi0}
\Pi_0^\bot {\mathcal A} \Pi_0 = 0 \quad \text{and} \quad \Pi_0^\bot {\mathcal A}^{- 1} \Pi_0 = 0 \,. 
\end{equation}
We now show that the operator $\Pi_0^\bot {\mathcal A}^{- 1} \Pi_0^\bot : H^s_0 \to H^s_0$ is the inverse of the operator $\mA_\perp :=\Pi_0^\bot {\mathcal A} \Pi_0^\bot : H^s_0 \to H^s_0$. By \eqref{prop bella Pi0 bot A Pi0}, one has 
$$
\begin{aligned}
(\Pi_0^\bot {\mathcal A}^{- 1} \Pi_0^\bot)(\Pi_0^\bot {\mathcal A} \Pi_0^\bot) & = \Pi_0^\bot {\mathcal A}^{- 1} \big( {\rm Id} - \Pi_0 \big) {\mathcal A} \Pi_0^\bot   \\
& =  \Pi_0^\bot {\mathcal A}^{- 1} {\mathcal A} \Pi_0^\bot - \Pi_0^\bot {\mathcal A}^{- 1} \Pi_0 {\mathcal A} \Pi_0^\bot = \Pi_0^\bot
\end{aligned}
$$
and similarly $(\Pi_0^\bot {\mathcal A} \Pi_0^\bot) (\Pi_0^\bot {\mathcal A}^{- 1} \Pi_0^\bot)=\Pi_0^\perp$. The claimed statement then follows. 

\noindent
{\sc Proof of $(ii)$.} First, we note that, given a function $h(\vphi, x)$ and integrating by parts
$$
\begin{aligned}
\Pi_0[ {\mathcal R}_a h] & = \frac{1}{(2 \pi)^2} \int_{\T^2} \nabla a(\vphi, x) \cdot \nabla^\bot h(\vphi, x)\, d x \\
&  = - \frac{1}{(2 \pi)^2} \int_{\T^2}  a(\vphi, x) {\rm div}\big( \nabla^\bot h(\vphi, x) \big)\, d x = 0
\end{aligned}
$$
since ${\rm div}(\nabla^\bot h) = 0$ for any function $h$. Moreover it is easy to see that ${\mathcal R}_a \Pi_0 = 0$. This implies that the linear operator ${\mathcal R}_a$ is invariant on the space of zero average functions. Then, using also item $(i)$, one has that 
\begin{equation}\label{A inv M nabla a A c}
\begin{aligned}
	{\mathcal A}_\bot^{- 1} {\mathcal R}_a {\mathcal A}_\bot & = \Pi_0^\bot {\mathcal A}^{- 1} {\mathcal R}_a {\mathcal A} \Pi_0^\bot = \Pi_0^\bot ({\mathcal A}^{- 1} {\mathcal M}_{\nabla a} {\mathcal A}  ) \cdot ({\mathcal A}^{- 1} \nabla^\bot {\mathcal A}) \Pi_0^\bot\,,
\end{aligned}
\end{equation}
where ${\mathcal M}_{\nabla a}$ denotes the multiplication operator by $\nabla a$. A direct calculation shows that the operator ${\mathcal A}^{- 1}{\mathcal M}_{\nabla a} {\mathcal A} = {\mathcal M}_g$, where the function $g (\vphi, y) := \nabla a (\vphi, y + \breve \alpha(\vphi, y)) = \{{\mathcal A}^{- 1} \nabla a\}(\vphi, y)$. The estimates \eqref{prop multiplication decay}, \eqref{stima tame cambio variabile rid trasporto} imply that 
\begin{equation}\label{A inv M nabla a A}
|{\mathcal A}^{- 1} {\mathcal M}_{\nabla a} {\mathcal A}|_{0, s} = |{\mathcal M}_g|_{0, s} \lesssim_s \| a \|_{s + 1}\,.
\end{equation} 
Moreover, one computes  ${\mathcal A}^{- 1} \partial_{x_i} {\mathcal A} \,[h]= \partial_{y_i} h + {\mathcal A}^{- 1}[\partial_{x_i} \alpha] \cdot \nabla h $, for $i=1,2$.
Using that $|\partial_{x_1}|_{1, s}, |\partial_{x_2}|_{1, s} \leq 1$ and by applying the estimates \eqref{prop multiplication decay}, \eqref{stima alpha trasporto}, \eqref{stima tame cambio variabile rid trasporto}, \eqref{condizione piccolezza rid trasporto}, one obtains that 
\begin{equation}\label{A inv M nabla a A b}
|{\mathcal A}^{- 1} \nabla^\bot {\mathcal A}|_{1, s} \lesssim_s 1 + \| \alpha \|_{s + 1} \lesssim_s 1\,. 
\end{equation}
Using the trivial fact that $|\Pi_0^\bot|_{0, s} \leq 1$, the formula \eqref{A inv M nabla a A c}, the estimates \eqref{A inv M nabla a A}, \eqref{A inv M nabla a A b}, together with the composition Lemma \ref{proprieta standard norma decay}-$(ii)$, imply the claimed bound. 

\noindent
{\sc Proof of $(iii)$.} Since $\Pi_0 \Delta = \Delta \Pi_0 = 0$, one computes 
$$
{\mathcal P}_\Delta =  {\mathcal A}_\bot^{- 1} (- \Delta) {\mathcal A}_\bot = \Pi_0^\bot {\mathcal A}^{- 1} (- \Delta) {\mathcal A} \Pi_0^\bot\,. 
$$
and hence, a direct calculation shows that 
\begin{equation}\label{RDelta}
\begin{aligned}
& {\mathcal A}_\bot^{- 1} (- \Delta) {\mathcal A}_\bot = - \Delta + {\mathcal R}_\Delta,  \\
& {\mathcal R}_\Delta := \Pi_0^\bot \Big(  \sum_{i, j = 1}^2 \mA^{-1} [a_{i j}]\, \partial_{y_i y_j} + \sum_{k = 1}^2 \mA^{-1} [b_k ] \, \partial_{y_k} \Big)\Pi_0^\bot\,,
\end{aligned}
\end{equation}
where
\begin{equation*}
	\begin{footnotesize}
		\begin{aligned}
			&a_{11}(\vphi,x):=-\alpha_{x_1}(\vphi,x)(2+\alpha_{x_1}(\vphi,x))\,,\quad a_{22}(\vphi,x):=-\alpha_{x_2}(\vphi,x)(2+\alpha_{x_2}(\vphi,x))\,, \\
			& a_{12}(\vphi,x)=a_{21}(\vphi,x)=-\alpha_{x_1}(\vphi,x)\alpha_{x_2}(\vphi,x) -\tfrac12(\alpha_{x_1}(\vphi,x)+\alpha_{x_2}(\vphi,x))\,, \\
			& b_{1}(\vphi,x):=-\alpha_{x_1 x_1}(\vphi,x)-\alpha_{x_1 x_2}(\vphi,x)\,, \quad b_{2}(\vphi,x):=-\alpha_{x_1 x_2}(\vphi,x)-\alpha_{x_2 x_2}(\vphi,x)\,.
		\end{aligned}
	\end{footnotesize}
\end{equation*}
By Lemma \ref{lemma:LS norms} and the estimate \eqref{stima alpha trasporto}, we have, for any $s_0\leq s \leq S- \sigma - 2$
\begin{equation}\label{stime a ij bk Delta}
\| \mA^{-1} [a_{i j}] \|_s^{{\rm Lip}(\gamma)}\,,\,  \| \mA^{-1}[b_k] \|_s^{{\rm Lip}(\gamma)} \lesssim_s \| \alpha \|_{s + 2}^{{\rm Lip}(\gamma)} + (\| \alpha \|_{s + 2}^{{\rm Lip}(\gamma)})^2 \lesssim_s \e \gamma^{- 1}
\end{equation}
By \eqref{RDelta}, Lemma \ref{lemma:LS norms}, estimates \eqref{prop multiplication decay}, \eqref{stima Pi 0}, \eqref{stime a ij bk Delta}, Lemma  \ref{proprieta standard norma decay}-$(ii)$ together with the trivial fact that $|\partial_{x_j}|_{1, s}\,,\, |\partial_{x_i x_j}|_{2, s} \lesssim 1$, we conclude that $\mR_{\Delta}$ satisfies the claimed bound.  

\medskip

\noindent
{\sc Proof of $(iv)$.} We write 
$$
{\mathcal P}_\Delta = - \Delta  + {\mathcal R}_\Delta = (- \Delta) \big( {\rm Id}_0 + (- \Delta)^{- 1} {\mathcal R}_\Delta \big)
$$
where ${\rm Id}_0$ is the identity on the space of the $L^2$ zero average functions. By  Lemma \ref{proprieta standard norma decay}-$(ii)$, estimates \eqref{decay laplace}, \eqref{stima R Delta}, one obtains that $|(- \Delta)^{- 1} {\mathcal R}_\Delta|_{0, s}^{{\rm Lip}(\gamma)} \lesssim_s \e \gamma^{- 1}$. Hence, by the smallness condition in \eqref{condizione piccolezza rid trasporto} and by Lemma \ref{proprieta standard norma decay}-$(iv)$, one gets that $ {\rm Id}_0 + (- \Delta)^{- 1} {\mathcal R}_\Delta : H^s_0 \to H^s_0 $ is invertible, with $|\big( {\rm Id}_0 + (- \Delta)^{- 1} {\mathcal R}_\Delta \big)^{- 1}|_{0, s}^{{\rm Lip}(\gamma)} \lesssim_s 1 $. The claimed statement then follows since we have
$$
{\mathcal P}_\Delta^{- 1} = \big( {\rm Id}_0 + (- \Delta)^{- 1} {\mathcal R}_\Delta \big)^{- 1}(- \Delta)^{- 1}
$$
using again Lemma \ref{proprieta standard norma decay}-$(ii)$. 
\end{proof}

We conclude this section by conjugating the whole operator ${\mathcal L}$ defined in \eqref{operatore linearizzato} by means of the map ${\mathcal A}$ constructed in Proposition \ref{proposizione trasporto}. 
\begin{proposition}\label{lemma coniugio cal L (0)}
Let  $ S > s_0 + \sigma + 2$ (where $\sigma$ is the constant appearing in Proposition \ref{proposizione trasporto}). Then there exists $\delta := \delta(S, \tau, d) \in (0, 1)$ small enough such that, if \eqref{ansatz} and \eqref{condizione piccolezza rid trasporto} are fulfilled, the following holds.  
For any $(\omega, \zeta ) \in DC(\gamma, \tau)$ defined in \eqref{set DC trasporto}, one has
\begin{equation}\label{cal L (1)}
\begin{aligned}
{\mathcal L}^{(1)}_\nu & := {\mathcal A}_\bot^{- 1} {\mathcal L} {\mathcal A}_\bot = {\mathcal L}^{(1)}_e - \nu \Delta + {\mathcal R}_\nu^{(1)} \,, \\
{\mathcal L}^{(1)}_e &:=  {\mathcal A}^{- 1}_\bot {\mathcal L}_e {\mathcal A}_\bot =  \omega \cdot \partial_\vphi + \zeta \cdot \nabla +{\mathcal R}^{(1)}  
\end{aligned}
\end{equation}
where the map $\mA_\perp$ is defined as in Proposition \ref{proposizione trasporto} and Lemma \ref{coniugazione inverso laplaciano cambio variabile}-$(i)$, whereas, for any $s_0 \leq s \leq S - \sigma - 2$, the operators ${\mathcal R}^{(1)} \in {\mathcal O}{\mathcal P}{\mathcal M}^{- 1}_{s}$ and ${\mathcal R}^{(1)}_\nu  \in {\mathcal O}{\mathcal P}{\mathcal M}^2_s$ satisfy the estimates
\begin{equation}\label{stime cal R (1)}
\begin{aligned}
& |{\mathcal R}^{(1)}|_{- 1, s}^{{\rm Lip}(\gamma)} \lesssim_{s} \e \,, \qquad  |{\mathcal R}_\nu^{(1)}|_{2, s}^{{\rm Lip}(\gamma)} \lesssim_s  \e \gamma^{- 1} \,\nu \,.
\end{aligned}
\end{equation}
Moreover, the operators ${\mathcal L}^{(1)}_e$ and ${\mathcal R}^{(1)}$ are real and reversible and ${\mathcal R}^{(1)}, {\mathcal R}_\nu^{(1)}$ leave invariant the space of functions with zero average in $x$. 
\end{proposition}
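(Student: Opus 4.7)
Decompose
\[
\mL_\nu = \mathcal{T} + \e\mR - \nu\Delta\,, \qquad \mathcal{T} := \omega\cdot\partial_\vphi + (\zeta+\e a)\cdot\nabla\,,
\]
so that the conjugation distributes as
\[
\mA_\bot^{-1}\mL_\nu\mA_\bot \;=\; \mA_\bot^{-1}\mathcal{T}\mA_\bot \;+\; \e\,\mA_\bot^{-1}\mR\mA_\bot \;-\; \nu\,\mA_\bot^{-1}\Delta\mA_\bot\,.
\]
The first term is handled by Proposition \ref{proposizione trasporto}: the transport operator $\mathcal{T}$ leaves the zero--average subspace invariant (since $\mathrm{div}(a)=0$) and, by Lemma \ref{coniugazione inverso laplaciano cambio variabile}--$(i)$, $\mA_\bot^{-1}=\Pi_0^\bot\mA^{-1}\Pi_0^\bot$ is the inverse of $\mA_\bot$ on $H^s_0$; hence the straightening identity of Proposition \ref{proposizione trasporto} yields $\mA_\bot^{-1}\mathcal{T}\mA_\bot = \omega\cdot\partial_\vphi + \zeta\cdot\nabla$ on zero--average functions.

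For the viscous term I would invoke Lemma \ref{coniugazione inverso laplaciano cambio variabile}--$(iii)$ directly:
\[
-\nu\,\mA_\bot^{-1}\Delta\mA_\bot \;=\; \nu\,{\mathcal P}_\Delta \;=\; -\nu\Delta + \nu\,\mR_\Delta\,.
\]
Setting $\mR_\nu^{(1)} := \nu\,\mR_\Delta$, the bound \eqref{stima R Delta} immediately gives $|\mR_\nu^{(1)}|_{2,s}^{{\rm Lip}(\gamma)} = \nu\,|\mR_\Delta|_{2,s}^{{\rm Lip}(\gamma)} \lesssim_s \nu\e\gamma^{-1}$. This is the structural point of the whole proposition: conjugating the Laplacian preserves its principal symbol and produces an unbounded second--order correction whose size carries the \emph{small} factor $\e\gamma^{-1}$ (the size of the straightening diffeomorphism $\alpha$) multiplied by $\nu$. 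Consequently the viscous perturbation is of size $O(\nu\e)$, which is exactly what will later allow the smallness condition on $\e$ to be decoupled from $\nu$.

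For the pseudodifferential piece, factor $\mR = \mR_{v_e}\circ (-\Delta)^{-1}$, with $\mR_{v_e}h := \nabla v_e\cdot \nabla_\bot h$ of the form covered by Lemma \ref{coniugazione inverso laplaciano cambio variabile}--$(ii)$. Inserting $\mA_\bot\mA_\bot^{-1}=\Pi_0^\bot$ on $H^s_0$ gives
\[
\e\,\mA_\bot^{-1}\mR\mA_\bot \;=\; \e\,(\mA_\bot^{-1}\mR_{v_e}\mA_\bot)\,{\mathcal P}_\Delta^{-1}\,.
\]
By Lemma \ref{coniugazione inverso laplaciano cambio variabile}--$(ii)$ the first factor lies in ${\mathcal O}{\mathcal P}{\mathcal M}^1_s$ with norm $\lesssim_s \|v_e\|_{s+1}^{{\rm Lip}(\gamma)}$, and by item $(iv)$ the second factor lies in ${\mathcal O}{\mathcal P}{\mathcal M}^{-2}_s$ with norm $\lesssim_s 1$. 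The composition bound of Lemma \ref{proprieta standard norma decay}--$(ii)$ then yields $\mR^{(1)} := \e\,\mA_\bot^{-1}\mR\mA_\bot \in {\mathcal O}{\mathcal P}{\mathcal M}^{-1}_s$ with $|\mR^{(1)}|_{-1,s}^{{\rm Lip}(\gamma)}\lesssim_s \e\,\|v_e\|_{s+\sigma}^{{\rm Lip}(\gamma)}\lesssim_s \e$, after absorbing the finitely many derivatives lost in composition into $\sigma$ and using the ansatz \eqref{ansatz}.

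The remaining structural assertions are essentially free: $\mR$ and $\mL_e$ are real and reversible by inspection of \eqref{operatore linearizzato}--\eqref{definizione cal R} and the parity of $v_e$; $\mA_\bot^{\pm 1}$ are real and reversibility--preserving by Proposition \ref{proposizione trasporto}; and the projections $\Pi_0^\bot$ sandwiching $\mA$ in the definition of $\mA_\bot$ guarantee the invariance of the space of zero space--average functions, together with \eqref{invarianze cal L}. Thus no genuine obstacle remains -- all the difficulty is already absorbed in Proposition \ref{proposizione trasporto} and Lemma \ref{coniugazione inverso laplaciano cambio variabile}. The only point that must be handled carefully is pinning the full factor $\nu$ on the second--order remainder so that it retains size $O(\nu\e\gamma^{-1})$ in spite of being unbounded of order two; this is the mechanism that, combined with the $O(\nu^{-1})$ gain of the diagonal inverse in Section \ref{sez inversioni}, produces a smallness condition on $\e$ which is uniform in the viscosity.
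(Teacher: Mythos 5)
Your proposal is correct and follows essentially the same route as the paper's proof: the same splitting of $\mL_\nu$ into transport, pseudodifferential and viscous parts, the same use of Proposition \ref{proposizione trasporto} and of items $(i)$--$(iv)$ of Lemma \ref{coniugazione inverso laplaciano cambio variabile} (including the factorization $\mR=\mR_{v_e}\circ(-\Delta)^{-1}$ and the identification $\mR_\nu^{(1)}=\nu\mR_\Delta$), and the same composition estimates. No gaps.
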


\begin{proof}
By recalling \eqref{operatore linearizzato} and by Lemma \ref{coniugazione inverso laplaciano cambio variabile}-$(i)$, one gets ${\mathcal L}^{(1)}_\nu  = {\mathcal L}_e^{(1)} - \nu {\mathcal A}_\bot^{- 1} \Delta {\mathcal A}_\bot$, where, by \eqref{invarianze cal L},
\begin{equation*}\label{cal L bla bla}
\begin{aligned} 
{\mathcal L}_e^{(1)} & :=  {\mathcal A}_\bot^{- 1} {\mathcal L}_e {\mathcal A}_\bot  = {\mathcal A}_\bot^{- 1} \big( \omega \cdot \partial_\vphi + (\zeta + \e a(\vphi, x)) \cdot \nabla \big) {\mathcal A}_\bot + \e {\mathcal A}_\bot^{- 1} {\mathcal R} {\mathcal A}_\bot \\
& :=  \Pi_0^\bot {\mathcal A}^{- 1}\Pi_0^\bot \big( \omega \cdot \partial_\vphi + (\zeta + \e a(\vphi, x)) \cdot \nabla \big) \Pi_0^\bot  {\mathcal A} \Pi_0^\bot + \e {\mathcal A}_\bot^{- 1} {\mathcal R} {\mathcal A}_\bot \\ 
& =  \Pi_0^\bot {\mathcal A}^{- 1} \big( \omega \cdot \partial_\vphi + (\zeta + \e a(\vphi, x)) \cdot \nabla \big)  {\mathcal A} \Pi_0^\bot + \e {\mathcal A}_\bot^{- 1} {\mathcal R} {\mathcal A}_\bot\,. 
\end{aligned}
\end{equation*}
By Proposition \ref{proposizione trasporto}, using the formula \eqref{operatore linearizzato},  one has that 
\begin{equation*}\label{cal L1 e bla}
{\mathcal L}^{(1)}_e  = \omega \cdot \partial_\vphi + \zeta \cdot \nabla + {\mathcal R}^{(1)}, \quad {\mathcal R}^{(1)} :=  \e  {\mathcal A}_\bot^{- 1} {\mathcal R} {\mathcal A}_\bot\,. 
\end{equation*}

 By \eqref{definizione cal R}, one writes,
 according to the notation of Lemma \ref{coniugazione inverso laplaciano cambio variabile}-$(ii)$, 
 $$
 {\mathcal R} = {\mathcal R}_{v_e} \circ (- \Delta)^{- 1}\,, \quad {\mathcal R}_{v_e} : h(\vphi, x) \mapsto \nabla v_e \cdot \nabla^\bot h(\vphi, x)\,.
 $$
 Therefore, 
$$
\begin{aligned}
{\mathcal R}^{(1)} & = \e {\mathcal A}_\bot^{- 1} {\mathcal R} {\mathcal A}_\bot = \e \big( {\mathcal A}_\bot^{- 1} {\mathcal R}_{v_e} {\mathcal A}_\bot \big)  \circ  \big({\mathcal A}_\bot^{- 1} (- \Delta)^{- 1} {\mathcal A}_\bot\big)\,. 
\end{aligned}
$$
Then, by Lemma \ref{coniugazione inverso laplaciano cambio variabile}, $(ii)$, $(iv)$
Lemma \ref{proprieta standard norma decay}-$(ii)$, the ansatz \eqref{ansatz} and $\e \gamma^{- 1} \leq \delta< 1$, one gets the claimed bound \eqref{stime cal R (1)} for ${\mathcal R}^{(1)}$.  By Lemma \ref{coniugazione inverso laplaciano cambio variabile}-$(iii)$, one has $- \nu {\mathcal A}_\bot^{- 1} \Delta {\mathcal A}_\bot = - \nu \Delta + {\mathcal R}_\nu^{(1)}$, with ${\mathcal R}_\nu^{(1)} := \nu \Pi_0^{\bot} \big( \Delta - {\mathcal A}^{- 1} \Delta {\mathcal A} \big) \Pi_0^\bot $ satisfying the estimate $ |{\mathcal R}_\nu^{(1)}|_s^{{\rm Lip}(\gamma)} \lesssim_s \e \gamma^{- 1} \nu $,
which is the second estimate in \eqref{stime cal R (1)}. Moreover, since ${\mathcal A}, {\mathcal A}^{- 1}$ are real and reversibility preserving and ${\mathcal R}$ is real and reversible, then ${\mathcal L}_e^{(1)}, {\mathcal R}^{(1)}$ are real and reversible. This concludes the proof.
\end{proof}

\subsection{Reduction to constant coefficients of the lower order terms}\label{sez riduzione ordini bassi}
In this section we diagonalize the operator ${\mathcal L}^{(1)}_\nu$ in \eqref{cal L (1)} up to a remainder of size $O(\e)$ and arbitrarily smoothing matrix decay and up to an unbounded operator of order $2$ of size $O(\e \gamma^{- 1} \nu)$. More precisely, we prove the following Proposition. 

\begin{proposition} \label{proposizione regolarizzazione ordini bassi}
Let $M\in\N$ be fixed. There exists $\sigma_{M - 1} := \sigma_{M - 1}( \tau, d) > \sigma + 2$ large enough (where $\sigma$ is the constant appearing in Proposition \ref{proposizione trasporto}) such that for any $S > s_0 + \sigma_{M - 1}$ there exists $\delta := \delta(S, M, \tau, d) \in (0, 1)$ small enough such that, if \eqref{ansatz}, \eqref{condizione piccolezza rid trasporto} are fulfilled, the following holds. For any $(\omega, \zeta) \in DC(\gamma, \tau)$ defined in \eqref{set DC trasporto}, there exists a real and reversibility preserving, invertible map ${\mathcal B}$ satisfying, for any $s_0\leq s\leq S-\sigma_{M - 1}$,
\begin{equation}\label{stima Phi M reg ordini bassi}
{\mathcal B}^{\pm 1} : H^s_0 \to H^s_0, \quad |{\mathcal B}^{\pm 1}- {\rm Id} |_{0, s}^{{\rm Lip}(\gamma)}  \lesssim_{s,M}\e \gamma^{- 1} 
\end{equation}
such that
\begin{equation}\label{def cal L (3)}
\begin{aligned}
{\mathcal L}^{(2)}_\nu &:= {\mathcal B}^{- 1}{\mathcal L}^{(1)}_\nu {\mathcal B} = {\mathcal L}_e^{(2)} - \nu \Delta + {\mathcal R}_\nu^{(2)}\,, \\
 {\mathcal L}_e^{(2)} & :=  {\mathcal B}^{- 1}{\mathcal L}^{(1)}_e {\mathcal B} =  \omega \cdot \partial_\vphi + \zeta \cdot \nabla + {\mathcal Q} + {\mathcal R}^{(2)} 
\end{aligned}
\end{equation}
where ${\mathcal Q}  \in {\mathcal O}{\mathcal P}{\mathcal M}^{- 1}_{s}$ is a diagonal operator, 
${\mathcal R}^{(2)}$ belongs to ${\mathcal O}{\mathcal P}{\mathcal M}^{- M}_{s}$ and ${\mathcal R}^{(2)}_\nu \in {\mathcal O}{\mathcal P}{\mathcal M}^2_{s}$. Moreover, for any $s_0\leq s \leq S-\sigma_{M - 1}$,
\begin{equation}\label{stima cal Z cal R (3)}
\begin{aligned}
& |{\mathcal Q}|_{- 1, s}^{{\rm Lip}(\gamma)} \, , \, 
|{\mathcal R}^{(2)}|_{- M, s}^{{\rm Lip}(\gamma)} 
\lesssim_{s, M} \e, 
\quad |{\mathcal R}^{(2)}_\nu|_{2, s}^{{\rm Lip}(\gamma)} \lesssim_{s,M}  \e \gamma^{- 1} \,\nu\,.  
\end{aligned}
\end{equation}
The operators ${\mathcal Q}$, ${\mathcal R}^{(2)}$ are real, reversible and ${\mathcal Q}$, ${\mathcal R}^{(2)}, {\mathcal R}^{(2)}_\nu$ leave invariant the space of functions with zero average in $x$.  
\end{proposition}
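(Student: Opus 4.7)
The plan is to carry out an iterative normal form reduction: I will construct a finite composition $\mathcal{B} := \Phi_0 \Phi_1 \cdots \Phi_{M-2}$ of conjugations $\Phi_n := \mathrm{Id} + \Psi_n$, where each $\Psi_n$ solves a homological equation that removes the off-diagonal part of the current remainder at order $-1-n$, producing a new remainder that is smoother by one order, while the viscous leading term $-\nu\Delta$ is perturbed only by a small operator of order two. The inductive hypothesis at step $n$ is
\[
\mathcal{L}^{(1,n)}_\nu = \omega \cdot \partial_\vphi + \zeta \cdot \nabla + \mathcal{Z}_n - \nu \Delta + \mathcal{R}_n + \wt{\mathcal{R}}_{n,\nu},
\]
with $\mathcal{Z}_n \in \mathcal{O}\mathcal{P}\mathcal{M}^{-1}_s$ diagonal of size $O(\e)$, $\mathcal{R}_n \in \mathcal{O}\mathcal{P}\mathcal{M}^{-1-n}_s$ real and reversible of size $O(\e)$, and $\wt{\mathcal{R}}_{n,\nu} \in \mathcal{O}\mathcal{P}\mathcal{M}^2_s$ of size $O(\e\gamma^{-1}\nu)$. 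The base case $n = 0$, with $\mathcal{Z}_0 = 0$, $\mathcal{R}_0 = \mathcal{R}^{(1)}$, $\wt{\mathcal{R}}_{0,\nu} = \mathcal{R}^{(1)}_\nu$, is exactly the conclusion of Proposition \ref{lemma coniugio cal L (0)}.

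To pass from step $n$ to step $n+1$ on $DC(\gamma,\tau)$, I invoke Lemma \ref{lemma eq omologica descent} with $m = -1-n$ to obtain $\Psi_n \in \mathcal{O}\mathcal{P}\mathcal{M}^{-1-n}_s$ solving $\omega \cdot \partial_\vphi \Psi_n + [\zeta \cdot \nabla, \Psi_n] + \mathcal{R}_n = \mathcal{D}_{\mathcal{R}_n}$, with $|\Psi_n|_{-1-n,s}^{\Lip(\gamma)} \lesssim_s \e\gamma^{-1}$. The smallness \eqref{condizione piccolezza rid trasporto} and Lemma \ref{proprieta standard norma decay}-$(iv)$ then make $\Phi_n := \mathrm{Id} + \Psi_n$ invertible on $H^s_0$ with $|\Phi_n^{\pm 1} - \mathrm{Id}|_{0, s}^{\Lip(\gamma)} \lesssim_s \e\gamma^{-1}$. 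Using the homological equation to cancel the unbounded commutator of $\Psi_n$ with $\omega \cdot \partial_\vphi + \zeta \cdot \nabla$, a direct computation gives
\begin{align*}
\mathcal{Z}_{n+1} &:= \mathcal{Z}_n + \mathcal{D}_{\mathcal{R}_n},\\
\mathcal{R}_{n+1} &:= (\Phi_n^{-1} - \mathrm{Id})(\mathcal{D}_{\mathcal{R}_n} - \mathcal{R}_n) + \Phi_n^{-1}\bigl([\mathcal{Z}_n, \Psi_n] + [\mathcal{R}_n, \Psi_n]\bigr),\\
\wt{\mathcal{R}}_{n+1,\nu} &:= \wt{\mathcal{R}}_{n,\nu} + \Phi_n^{-1}[\wt{\mathcal{R}}_{n,\nu}, \Psi_n] - \nu \Phi_n^{-1}[\Delta, \Psi_n].
\end{align*}
Lemma \ref{proprieta standard norma decay}-$(ii)$ combined with \eqref{decay laplace} shows $\mathcal{R}_{n+1} \in \mathcal{O}\mathcal{P}\mathcal{M}^{-2-n}_s$ with size $\lesssim_s \e \cdot (\e\gamma^{-1}) \leq \e$, and $\wt{\mathcal{R}}_{n+1,\nu} - \wt{\mathcal{R}}_{n,\nu} \in \mathcal{O}\mathcal{P}\mathcal{M}^{1-n}_s \subset \mathcal{O}\mathcal{P}\mathcal{M}^2_s$ with size $\lesssim_s \e\gamma^{-1}\nu$, closing the induction.

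After $M-1$ iterations I set $\mathcal{B} := \Phi_0 \cdots \Phi_{M-2}$, $\mathcal{Q} := \mathcal{Z}_{M-1}$, $\mathcal{R}^{(2)} := \mathcal{R}_{M-1}$, $\mathcal{R}^{(2)}_\nu := \wt{\mathcal{R}}_{M-1,\nu}$; the bound \eqref{stima Phi M reg ordini bassi} for $\mathcal{B}$ follows by telescoping via Lemma \ref{proprieta standard norma decay}-$(ii)$, the estimates \eqref{stima cal Z cal R (3)} are the cumulated inductive bounds, and the resulting Sobolev loss is $\sigma_{M-1} := \sigma + 2 + (M-1)(2\tau+1)$. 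Reality, reversibility, reversibility-preservation, and zero-average invariance all propagate at each step, since defining $\Psi_n$ from $\mathcal{R}_n$ via Lemma \ref{lemma eq omologica descent} preserves the symmetry characterizations of Lemma \ref{lemma real rev matrici}. The only subtle—rather than genuinely hard—point is maintaining the size $O(\e\gamma^{-1}\nu)$ of the viscosity remainder uniformly in $n$: this works precisely because every perturbation of $\wt{\mathcal{R}}_{n,\nu}$ carries either the prefactor $\nu$ (from $\nu[\Delta, \Psi_n]$) or the small factor $\e\gamma^{-1}$ (from a commutator with $\Psi_n$), so no amplification of $\nu^{-1}$ type ever occurs and the smallness condition on $\e$ remains independent of $\nu$.
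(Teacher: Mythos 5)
Your proposal is correct and follows essentially the same route as the paper: an iterative descent scheme in which each step solves the homological equation of Lemma \ref{lemma eq omologica descent} on $DC(\gamma,\tau)$ to push the non-diagonal remainder one order lower, with the composed map ${\mathcal B}$ and the final estimates obtained from Lemma \ref{proprieta standard norma decay}-$(ii)$. The only immaterial organizational difference is that the paper runs the iteration on ${\mathcal L}^{(1)}_e$ alone and conjugates $-\nu\Delta + {\mathcal R}^{(1)}_\nu$ by the full map ${\mathcal B}$ once at the end, whereas you track the viscous remainder through each step (both give the same $O(\e\gamma^{-1}\nu)$ bound); also, your explicit value of $\sigma_{M-1}$ should be enlarged slightly to absorb the extra $|m|$ derivatives required by the composition lemma when composing with operators of order $-(n+1)$ (the paper takes $\sigma_{n+1}=\sigma_n+2\tau+1+n+1$), which is harmless since only ``$\sigma_{M-1}$ large enough'' is claimed.
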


Proposition \ref{proposizione regolarizzazione ordini bassi} 
follows by the following iterative lemma on the operator obtained  by neglecting the viscosity term $- \nu \Delta + {\mathcal R}^{(2)}_\nu$ in \eqref{cal L (1)}, namely, in the next proposition, we only consider 
\begin{equation}\label{def cal L (1) 0}
{\mathcal L}^{(1)}_0 \equiv {\mathcal L}^{(1)}_e := \omega \cdot \partial_\vphi + \zeta \cdot \nabla + {\mathcal R}^{(1)}\,. 
\end{equation} 

\begin{lemma}\label{lemma iterativo ordini bassi}
Let $M \in \N$. There exist $0 < \sigma_0 < \sigma_1 < \ldots < \sigma_{M - 1}$ large enough such that for any $S > s_0 + \sigma_{M - 1}$ there exists $\delta := \delta(S, \tau, d) \in (0, 1)$ small enough such that, if \eqref{ansatz}, \eqref{condizione piccolezza rid trasporto} are fulfilled, the following holds.  For any $n = 0, \ldots, M -1 $ and any $(\omega, \zeta) \in DC(\gamma, \tau)$,  there exists a real, reversibility preserving, invertible map ${\mathcal T}_n$ satisfying, for any $s_0 \leq s \leq S-\sigma_{n}$
\begin{equation}\label{stima cal Tn reg ordini bassi}
{\mathcal T}_n^{\pm 1} : H^s_0 \to H^s_0, \quad |{\mathcal T}_n^{\pm 1} - {\rm Id}|_{- n,s}^{{\rm Lip}(\gamma)} \lesssim_{s,n} \e \gamma^{- 1} 
\end{equation}
and, for any $n = 1, \ldots, M - 1$ and for any  $(\omega, \zeta) \in DC(\gamma, \tau)$,
\begin{equation}\label{coniugazione lemma iterativo ordini bassi}
{\mathcal L}^{(1)}_n = {\mathcal T}_n^{- 1}{\mathcal L}_{n - 1}^{(1)} {\mathcal T}_n 
\,,
\end{equation}
where ${\mathcal L}^{(1)}_n$ has the form 
\begin{equation}\label{def cal L (2) n}
{\mathcal L}^{(1)}_n := \omega \cdot \partial_\vphi + \zeta \cdot \nabla + {\mathcal Z}_n + {\mathcal R}_n^{(1)}
\end{equation}
where ${\mathcal Z}_n \in {\mathcal O}{\mathcal P}{\mathcal M}^{- 1}_{s}$  is diagonal,
 ${\mathcal R}_n^{(1)} \in {\mathcal O}{\mathcal P}{\mathcal M}^{- (n + 1) }_{s}$ and they
 satisfy
\begin{equation}\label{stima induttiva cal Zn cal Rn (2)}
\begin{aligned}
& |{\mathcal Z}_n|_{- 1, s}^{{\rm Lip}(\gamma)} \lesssim_{s,n} \e \quad |{\mathcal R}_n^{(1)}|_{- (n + 1), s}^{{\rm Lip}(\gamma)} \lesssim_{s,n} \e \quad \ \forall s_0 \leq s \leq S-\sigma_{n}\,. 
\end{aligned}
\end{equation} 
The operators ${\mathcal L}_n^{(1)}, {\mathcal Z}_n$ and $ {\mathcal R}_n^{(1)}$ are real, reversible and leave invariant the space of zero average functions in $x$. 
%
\end{lemma}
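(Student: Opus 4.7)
I would proceed by induction on $n$, the base case $n=0$ being satisfied trivially by $\mathcal{T}_0 := \mathrm{Id}$, $\mathcal{Z}_0 := 0$, $\mathcal{R}_0^{(1)} := \mathcal{R}^{(1)}$, so that \eqref{def cal L (1) 0} coincides with \eqref{def cal L (2) n} at $n=0$ and the estimate \eqref{stima induttiva cal Zn cal Rn (2)} reduces to the bound on $\mathcal{R}^{(1)}$ provided by Proposition \ref{lemma coniugio cal L (0)}. The inductive step proceeds as a standard descent: given the claim at step $n-1$, I look for $\mathcal{T}_n = \mathrm{Id} + \Psi_n$ with $\Psi_n \in {\mathcal O}{\mathcal P}{\mathcal M}^{-n}_s$ such that the conjugation eliminates the off-diagonal part of $\mathcal{R}_{n-1}^{(1)}$ modulo terms strictly smoothing by one more order.

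The computation $\mathcal{T}_n^{-1}\mathcal{L}_{n-1}^{(1)}\mathcal{T}_n = \mathcal{L}_{n-1}^{(1)} + [\mathcal{L}_{n-1}^{(1)},\Psi_n] + (\mathcal{T}_n^{-1}-\mathrm{Id})[\mathcal{L}_{n-1}^{(1)},\Psi_n]$, together with the splitting
\begin{equation*}
[\mathcal{L}_{n-1}^{(1)},\Psi_n] = \omega\cdot\partial_\vphi \Psi_n + [\zeta\cdot\nabla,\Psi_n] + [\mathcal{Z}_{n-1}+\mathcal{R}_{n-1}^{(1)},\Psi_n],
\end{equation*}
suggests imposing the homological equation
\begin{equation*}
\omega\cdot\partial_\vphi \Psi_n + [\zeta\cdot\nabla,\Psi_n] + \mathcal{R}_{n-1}^{(1)} = \mathcal{D}_{\mathcal{R}_{n-1}^{(1)}}.
\end{equation*}
By Lemma \ref{lemma eq omologica descent}, for $(\omega,\zeta)\in DC(\gamma,\tau)$ this admits a solution $\Psi_n$ with $|\Psi_n|_{-n,s}^{\mathrm{Lip}(\gamma)} \lesssim_{s,n} \gamma^{-1}|\mathcal{R}_{n-1}^{(1)}|_{-n,s+2\tau+1}^{\mathrm{Lip}(\gamma)} \lesssim_{s,n} \e \gamma^{-1}$, once the loss of $2\tau+1$ derivatives is absorbed into the increment $\sigma_n - \sigma_{n-1}$. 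By the smallness assumption \eqref{condizione piccolezza rid trasporto}, Lemma \ref{proprieta standard norma decay}-$(iv)$ ensures $\mathcal{T}_n$ is invertible on $H^s_0$ with $|\mathcal{T}_n^{\pm 1}-\mathrm{Id}|_{-n,s}^{\mathrm{Lip}(\gamma)} \lesssim_{s,n} \e \gamma^{-1}$, giving \eqref{stima cal Tn reg ordini bassi}.

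With this choice one obtains $\mathcal{L}_n^{(1)} = \omega\cdot\partial_\vphi + \zeta\cdot\nabla + \mathcal{Z}_n + \mathcal{R}_n^{(1)}$, where $\mathcal{Z}_n := \mathcal{Z}_{n-1} + \mathcal{D}_{\mathcal{R}_{n-1}^{(1)}}$ stays diagonal of order $-1$ (the new contribution being diagonal of order $-n\leq -1$), and
\begin{equation*}
\mathcal{R}_n^{(1)} := [\mathcal{Z}_{n-1},\Psi_n] + [\mathcal{R}_{n-1}^{(1)},\Psi_n] + (\mathcal{T}_n^{-1}-\mathrm{Id})[\mathcal{L}_{n-1}^{(1)},\Psi_n].
\end{equation*}
The composition estimate Lemma \ref{proprieta standard norma decay}-$(ii)$ applied to $\mathcal{Z}_{n-1} \in {\mathcal O}{\mathcal P}{\mathcal M}^{-1}_s$ and $\Psi_n \in {\mathcal O}{\mathcal P}{\mathcal M}^{-n}_{s+1}$ puts $[\mathcal{Z}_{n-1},\Psi_n]$ in ${\mathcal O}{\mathcal P}{\mathcal M}^{-(n+1)}_s$; the commutator $[\mathcal{R}_{n-1}^{(1)},\Psi_n]$ has order $-2n\leq -(n+1)$; and since $[\omega\cdot\partial_\vphi+\zeta\cdot\nabla,\Psi_n]$ has matrix elements $\ii(\omega\cdot\ell+\zeta\cdot(j-j'))(\Psi_n)_j^{j'}(\ell)$, hence belongs to ${\mathcal O}{\mathcal P}{\mathcal M}^{-n}_{s-1}$, the last term lies in ${\mathcal O}{\mathcal P}{\mathcal M}^{-2n}_s \subseteq {\mathcal O}{\mathcal P}{\mathcal M}^{-(n+1)}_s$. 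All quantitative bounds come out of size $O(\gamma^{-1}\e\cdot\e) = O(\e^2\gamma^{-1})\lesssim \e$ by \eqref{condizione piccolezza rid trasporto}, yielding \eqref{stima induttiva cal Zn cal Rn (2)}. Reality, reversibility and zero-average invariance propagate because Lemma \ref{lemma real rev matrici} shows these properties are inherited by $\mathcal{D}_{\mathcal{R}_{n-1}^{(1)}}$ from $\mathcal{R}_{n-1}^{(1)}$ and are preserved by the solution operator of Lemma \ref{lemma eq omologica descent}. The main bookkeeping issue, rather than any genuine analytic obstacle, is tracking the sequence $\sigma_n$: each step adds a fixed universal number of derivatives (the $2\tau+1$ from the homological equation plus the finite overhead of Lemma \ref{proprieta standard norma decay}-$(ii)$), so after $M-1$ iterations the cumulative loss $\sigma_{M-1}$ is finite and determines the regularity threshold $S > s_0 + \sigma_{M-1}$.
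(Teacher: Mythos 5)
Your proposal is correct and follows essentially the same route as the paper: induction on $n$, transformation $\mathcal{T}_n=\mathrm{Id}+\Psi_n$ with $\Psi_n$ of order $-n$ solving the same homological equation via Lemma \ref{lemma eq omologica descent}, $\mathcal{Z}_n=\mathcal{Z}_{n-1}+\mathcal{D}_{\mathcal{R}_{n-1}^{(1)}}$, and the same order/size bookkeeping for the new remainder (your commutator form of $\mathcal{R}_n^{(1)}$ is an equivalent regrouping of the paper's product form $(\mathcal{T}^{-1}-\mathrm{Id})\mathcal{Z}+\mathcal{T}^{-1}(\mathcal{Z}\mathcal{K}+\mathcal{R}\mathcal{K})$). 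No gaps.
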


\begin{proof}
We prove the lemma arguing by induction. For $n = 0$ the desired properties follow by Proposition \ref{lemma coniugio cal L (0)}, by defining ${\mathcal L}^{(1)}_0$ as in \eqref{def cal L (1) 0}, ${\mathcal Z}_0 = 0$, ${\mathcal R}^{(1)}_0 := {\mathcal R}^{(1)}$ and $\sigma_0 = \sigma$ given in Proposition \ref{lemma coniugio cal L (0)}. 

We assume that the claimed statement holds for some $n \in \{0, \ldots, M - 2\}$ 
and we prove it at the step $n+1$. Let us consider a transformation ${\mathcal T}_{n+1} = {\rm Id} +{\mathcal K}_{n+1}$ where ${\mathcal K}_{n+1} $ is an operator of order $- (n+1)$ which has to be determined. One computes 
\begin{equation}\label{prima formula cal L n (2)}
\begin{aligned}
{\mathcal L}_n^{(1)} {\mathcal T}_{n + 1} 
& = {\mathcal T}_{n + 1} \big( \omega \cdot \partial_\vphi + \zeta \cdot \nabla \big) + {\mathcal Z}_n 
+ (\omega \cdot \partial_\vphi {\mathcal K}_{n + 1}) + [\zeta \cdot \nabla, {\mathcal K}_{n + 1}] + {\mathcal R}_n^{(1)}
 \\
 & \qquad + {\mathcal Z}_n {\mathcal K}_{n + 1} + {\mathcal R}_n^{(1)}{\mathcal K}_{n + 1}\,. 
\end{aligned}
\end{equation}
By the induction hypothesis ${\mathcal R}_n^{(1)} \in {\mathcal O}{\mathcal P}{\mathcal M}_{s}^{- (n + 1)}$, with $|{\mathcal R}_n^{(1)} |_{- (n + 1), s} \lesssim_{s,n+1} \e$ for any $s_0\leq s \leq S- \sigma_{n}$. Therefore, by Lemma \ref{lemma eq omologica descent}, there exists ${\mathcal K}_{n + 1} \in {\mathcal O}{\mathcal P}{\mathcal M}_{s}^{- (n + 1)}$ solving the homological equation
\begin{equation}\label{M n + 1 R n + 1 (1)}
\begin{aligned}
& \omega \cdot \partial_\vphi \,{{\mathcal K}}_{n + 1} + [\zeta \cdot \nabla, {{\mathcal K}}_{n + 1}] + {\mathcal R}_n^{(1)} = {\mathcal D}_{{\mathcal R}_n^{(1)}}\,, 
\end{aligned}
\end{equation} 
with ${\mathcal D}_{{\mathcal R}_n^{(1)}}$ as in Definition \ref{def block-diagonal op}, satisfying, for any $s_0\leq s \leq S-\sigma_{n + 1}$ (for an arbitrary $\sigma_{n + 1} > \sigma_n + 2 \tau + 1$),
\begin{equation}\label{M n + 1 R n + 1 (2)}
	 |{\mathcal K}_{n + 1}|_{- (n + 1), s}^{{\rm Lip}(\gamma)} \lesssim_{s,n+1} \gamma^{- 1}|{\mathcal R}_n^{(1)} |^{{\rm Lip}(\gamma)}_{- (n + 1), s + 2 \tau + 1} 
	  \lesssim_{s,n+1}  \e \gamma^{- 1}. 
\end{equation}
By Lemma \ref{proprieta standard norma decay}-$(iv)$, we obtain that ${\mathcal T}_{n + 1} = {\rm Id} + {\mathcal K}_{n + 1}$ is invertible with inverse ${\mathcal T}_{n + 1}^{- 1}$ satisfying the estimate, for $s_0\leq s\leq S-\sigma_{n + 1}$,
\begin{equation}\label{stima cal T n + 1 - 1 nel proof}
|{\mathcal T}_{n + 1}^{- 1} - {\rm Id}|_{- (n + 1), s}^{{\rm Lip}(\gamma)} 
\lesssim_{s,n+1} \e \gamma^{- 1}\,. 
\end{equation}
Hence the estimate \eqref{stima cal Tn reg ordini bassi} at the step $n + 1$ holds. 
By \eqref{prima formula cal L n (2)}, \eqref{M n + 1 R n + 1 (1)} we get, 
for any $(\omega, \zeta) \in DC(\gamma, \tau)$, 
the conjugation \eqref{coniugazione lemma iterativo ordini bassi} at the step $n + 1$ 
where ${\mathcal L}^{(1)}_{n + 1}$ has the form \eqref{def cal L (2) n}, with 
\begin{equation*}\label{cal Z cal R2 n + 1}
\begin{aligned}
{\mathcal Z}_{n + 1} & := {\mathcal Z}_n + {\mathcal D}_{{\mathcal R}_n^{(1)}}\,, \\
{\mathcal R}_{n + 1}^{(1)} & := ({\mathcal T}_{n + 1}^{- 1} - {\rm Id}){\mathcal Z}_{n + 1} 
+ {\mathcal T}_{n + 1}^{- 1} 
\big( {\mathcal Z}_n{{\mathcal K}}_{n + 1} + {\mathcal R}_n^{(1)}{{\mathcal K}}_{n + 1} \big)  \,. 
\end{aligned}
\end{equation*}
By Lemma \ref{proprieta standard norma decay}-$(v)$, since ${\mathcal R}_n^{(1)}$ and ${\mathcal Z}_n$ satisfy \eqref{stima induttiva cal Zn cal Rn (2)}, we deduce that ${\mathcal Z}_{n + 1} \in {\mathcal O}{\mathcal P}{\mathcal M}_{s}^{- 1}$, with estimates $|{\mathcal Z}_{n + 1}|_{- 1,s}^{{\rm Lip}(\gamma)} \lesssim_{s,n+1} \e$.   Moreover, by \eqref{M n + 1 R n + 1 (2)}, \eqref{stima cal T n + 1 - 1 nel proof}, Lemma \ref{proprieta standard norma decay}-$(ii)$ and the condition in \eqref{ansatz}, we obtain that ${\mathcal R}_{n + 1}^{(1)} \in {\mathcal OPM}_{s}^{- (n + 2)}$, with $|{\mathcal R}_{n + 1}^{(1)}|_{- (n + 2), s}^{{\rm Lip}(\gamma)} \lesssim_{s,n+1} \e$ for any $s_0\leq s\leq S-\sigma_{n + 1}$, by fixing $\sigma_{n+1} := \sigma_{n} + 2 \tau + 1 + n + 1$. This concludes the induction argument and therefore the claimed statement is proved.
\end{proof}

\begin{proof}[{\sc Proof of Proposition \ref{proposizione regolarizzazione ordini bassi}}] 
By \eqref{cal L (1)}, \eqref{def cal L (1) 0}, Lemma \ref{lemma iterativo ordini bassi} and by defining
$$
\begin{aligned}
{\mathcal B} & := {\mathcal T}_1 \circ \ldots \circ {\mathcal T}_{M - 1} \,, \quad 
{\mathcal Q}  := {\mathcal Z}_{M - 1}, \quad 
{\mathcal R}^{(2)} := {\mathcal R}_{M - 1}^{(1)}, 
\end{aligned}
$$
one obtains that 
\begin{equation*}\label{final conjugation cal L 0 (2)}
{\mathcal L}^{(2)}_e := {\mathcal B}^{- 1} {\mathcal L}^{(1)}_e {\mathcal B} = \omega \cdot \partial_\vphi + \zeta \cdot \nabla + {\mathcal Q} + {\mathcal R}^{(2)}
\end{equation*}
To deduce the claimed properties of ${\mathcal Q}$ and ${\mathcal R}^{(2)}$, 
it suffices to apply Lemma \ref{lemma iterativo ordini bassi} for $n = M - 1$.
 The estimate \eqref{stima Phi M reg ordini bassi} then follows by the estimate \eqref{stima cal Tn reg ordini bassi} on ${\mathcal T}_n$, $n = 1, \ldots, M - 1$, using the composition property stated in Lemma \ref{proprieta standard norma decay}-$(ii)$. We now study the conjugation ${\mathcal B}^{- 1}( - \nu \Delta + {\mathcal R}_\nu^{(1)}){\mathcal B}$. One has 
$$
\begin{aligned}
{\mathcal B}^{- 1} \big( - \nu \Delta + {\mathcal R}_\nu^{(1)} \big){\mathcal B} & = - \nu {\mathcal B}^{- 1} \Delta {\mathcal B} + {\mathcal B}^{- 1} {\mathcal R}_\nu^{(1)} {\mathcal B } = - \nu \Delta + {\mathcal R}_\nu^{(2)}\,,
\end{aligned}
$$
where 
$$
{\mathcal R}_\nu^{(2)} := - \nu \big( \Delta ({\mathcal B} - {\rm Id}) + ({\mathcal B}^{- 1} - {\rm Id}) \Delta {\mathcal B} \big) + {\mathcal B}^{- 1} {\mathcal R}_\nu^{(1)} {\mathcal B}\,.
$$
By applying again Lemma \ref{proprieta standard norma decay}-$(ii)$ and the estimates \eqref{stime cal R (1)}, \eqref{stima Phi M reg ordini bassi} (using also that $|\Delta|_{2, s} \leq 1$ for any $s$), one gets that ${\mathcal R}_\nu^{(2)}$ satisfies \eqref{stima cal Z cal R (3)} for any $s_0\leq s\leq S-\sigma_{M - 1}$. The proof of the claimed statement is then concluded. 
\end{proof}

\subsection{KAM reducibility} \label{sezione riducibilita a blocchi}
In this section we perform the KAM reducibility scheme for the operator obtained by neglecting the small viscosity term $- \nu \Delta + {\mathcal R}_{ \nu}^{(2)}$ from the operator ${\mathcal L}^{(2)}_\nu$ in \eqref{def cal L (3)}, namely we only consider the operator ${\mathcal L}^{(2)}_e$. More precisely we consider the operator 
\begin{equation}\label{op inizio riducibilita}
\begin{aligned}
& {\mathcal L}_0 := {\mathcal L}^{(2)}_e =  \omega \cdot \partial_\vphi + {\mathcal D}_0 + {\mathcal R}_0\,, \\
& {\mathcal D}_0 := \zeta \cdot \nabla + {\mathcal Q}\,, \quad {\mathcal R}_0 := {\mathcal R}^{(2)}\,. 
\end{aligned}
\end{equation}
where the diagonal operator $\mQ$ and the smoothing operator $\mR^{(2)}$ are as in Proposition \ref{proposizione regolarizzazione ordini bassi}.
Given $\tau, N_0 > 0$, we fix the constants 
\begin{equation}\label{definizione alpha beta}
\begin{aligned}
& 
M := [4 \tau] + 1,  \quad 
\alpha := (1+\chi^{-1})\tau_1+1 \,, \quad 
\beta:= \alpha + 1 \,, \\
& \tau_1:= 4\tau+2+M\,, \quad \Sigma(\beta):= \sigma_{M - 1} +\beta\,, \\
&   
N_{- 1} := 1, \quad 
N_n := N_0^{\chi^n}, \quad 
n \geq 0, \quad 
\chi := 3/2 
\end{aligned}
\end{equation}
where $[4\tau]$ is the integer part of $4\tau$ and $M$, $\sigma_{M - 1}$ are introduced in Proposition \ref{proposizione regolarizzazione ordini bassi}. 

By Proposition \ref{proposizione regolarizzazione ordini bassi}, replacing $s$ by $s + \beta$ in  \eqref{stima cal Z cal R (3)} and having ${\mathcal Q} = {\rm diag}_{j \in \Z^2 \setminus \{ 0 \}} q_0(j)$  diagonal, one gets the initialization conditions for the KAM reducibility, for any $s_0\leq s\leq S-\Sigma(\beta)$,
\begin{equation}\label{stime pre rid}
\sup_{j \in \Z^2 \setminus \{ 0 \}} |j| |q_0(j)|^{{\rm Lip}(\gamma)}, |{\mathcal R}_0|_{- M, s + \beta}^{{\rm Lip}(\gamma)} \leq C(S) \e\,.
\end{equation}

\begin{proposition}[\bf Reducibility]\label{prop riducibilita}
Let $S > s_0 + \Sigma(\beta)$. There exist $N_0 := N_0(S, \tau, d) > 0$ large enough and $\delta := \delta(S, \tau, d) \in (0, 1)$ small enough such that, if \eqref{ansatz} holds and
\begin{equation}\label{KAM smallness condition}
N_0^{\tau_1}\e \gamma^{-1} 
\leq \delta, 
\end{equation} 
then the following statements hold for any integer $n \geq 0$.

\smallskip
\noindent 
${\bf (S1)}_n$ There exists a real and reversible operator 
\begin{equation}\label{def.calLn calDn calQn}
	\begin{aligned}
		& {\mathcal L}_n := \omega \cdot \partial_\vphi + {\mathcal D}_n +   {\mathcal R}_n : H^{s + 1}_0 \to H^s_0, \\
		& {\mathcal D}_n := \zeta \cdot \nabla + {\mathcal Q}_n =  {\rm diag}_{j \in \Z^2 \setminus \{ 0 \}} \mu_n(j) \,, \\ 
		& {\mathcal Q}_n =  {\rm diag}_{j \in \Z^2 \setminus \{ 0 \}} q_n(j)\,, \quad \mu_n(j) :=  \ii\, \zeta \cdot j + q_n(j)\,,
	\end{aligned}
\end{equation}
defined for any $\lambda \in \Lambda_n^\gamma$, where we define $\Lambda_0^\gamma := DC(\gamma, \tau)$ for $n=0$ and, for $n \geq 1$, 
\begin{equation}\label{insiemi di cantor rid}
\begin{aligned}
\Lambda_n^\gamma &:= \Big\{ \lambda = (\omega, \zeta) \in \Lambda_{n - 1}^\gamma : |\ii \,\omega \cdot \ell + \mu_{n - 1}(j) - \mu_{n - 1}(j') | \geq \frac{\gamma}{\langle \ell \rangle^\tau |j|^\tau |j'|^\tau},  \\
& \forall \,\ell \in \Z^d, \  
j,j' \in \Z^2 \setminus \{ 0 \}, \ \ 
(\ell, j, j') \neq (0, j, j), \ \ 
|\ell|, |j-j'| \leq N_{n-1} \Big\}\,. 
\end{aligned}
\end{equation}
For any $j \in \Z^2 \setminus \{ 0 \}$, the eigenvalues $\mu_{n}(j)= \mu_{n}(j;\lambda)$ are purely imaginary and satisfy the conditions
\begin{equation}\label{reversibility reality auto}
\begin{aligned}
& \mu_n(j) = - \mu_n(- j) = \overline{\mu_n(- j)}, \ \ \text{or equivalently}   \\
& q_n(j) = - q_n(- j)= \overline{q_n(- j)}\,, 
\end{aligned}
\end{equation}
and the estimates
\begin{align}
	& |q_n(j)|^{{\rm Lip}(\gamma)} \lesssim \e |j|^{- 1}\,, \quad | q_n(j)- q_0(j) |^{{\rm Lip}(\gamma)} \lesssim
	\e  |j|^{- M}, \quad \forall j \in \Z^2 \setminus \{ 0 \}\,, \label{stime qn} \\
	&| q_n(j) - q_{n - 1}(j)|^{{\rm Lip}(\gamma)}
	\lesssim \e N_{n - 2}^{- \alpha} |j|^{- M} \quad \ \text{when } \quad n\geq 1 \,. \label{cal Nn - N n - 1}
\end{align}
The operator ${\mathcal R}_n$ is real and reversible, satisfying, for any $s_0\leq s\leq S-\Sigma(\beta)$,
\begin{equation}\label{stime cal Rn rid}
\begin{aligned}
& |{\mathcal R}_n  |_{- M, s}^{{\rm Lip}(\gamma)} 
 \leq C_*(s,\beta) \e N_{n - 1}^{- \alpha},  \quad 
|{\mathcal R}_n  |_{-M, s + \beta}^{{\rm Lip}(\gamma)}  \leq C_*(s,\beta) \e N_{n - 1}
\end{aligned}
\end{equation}
for some constant $C_* (s) = C_*(s, \tau) > 0$. 

When $n \geq 1$, there exists an invertible, real and reversibility preserving map 
$\Phi_{n -1} = {\rm Id} + \Psi_{n - 1}$, such that, 
for any $\lambda = (\omega, \zeta) \in \Lambda_n^\gamma$, 
\begin{equation}\label{coniugazione rid}
{\mathcal L}_n = \Phi_{n - 1}^{- 1} {\mathcal L}_{n - 1} \Phi_{n - 1}\,.
\end{equation}
Moreover, for any $s_0\leq s\leq S-\Sigma(\beta)$, the map $\Psi_{n - 1} : H^s_0 \to H^s_0$ satisfies
\begin{equation}\label{stime Psi n rid}
\begin{aligned}
|\Psi_{n - 1}|_{0, s}^{{\rm Lip}(\gamma)}
& \leq C(s,\beta) \e \gamma^{- 1} N_{n - 1}^{4 \tau + 2} N_{n - 2}^{- \alpha}\,, \\
|\Psi_{n - 1} |_{0, s + \beta}^{{\rm Lip}(\gamma)} & \leq C(s,\beta) \e \gamma^{- 1} N_{n - 1}^{4 \tau + 2}  N_{n - 2}\,,
\end{aligned}
\end{equation}
for some constant $C(s,\beta)>0$.
\\[1mm]
\noindent
${\bf (S2)}_n$ For all $ j \in \Z^2 \setminus \{ 0 \}$,  there exist a Lipschitz extension  of the eigenvalues $\mu_n(j;\,\cdot\,) :\Lambda_n^\gamma \to \ii\, \R$ to the set $DC(\gamma, \tau)$, denoted by
$ \widetilde \mu_n(j;\,\cdot\,): DC(\gamma, \tau) \to \ii \,\R$,
satisfying,  for $n \geq 1$, 
\begin{equation}\label{lambdaestesi}  
|\widetilde \mu_n(j) -  \widetilde \mu_{n - 1}(j) |^{{\rm Lip}(\gamma)}  \lesssim  |j|^{- M} |{\mathcal R}_{n - 1}|_{-M,s_0}^{{\rm Lip}(\gamma)}  \lesssim   |j|^{- M} \e N_{n - 2}^{- \alpha} \,.
\end{equation}

\end{proposition}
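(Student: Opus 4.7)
\medskip

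\noindent\textbf{Proof proposal.} I would prove both $(S1)_n$ and $(S2)_n$ by induction on $n$, following the standard diagonal KAM reducibility scheme adapted to operators of negative order $-M$. The base case $n=0$ is an immediate consequence of Proposition \ref{proposizione regolarizzazione ordini bassi}: the definition \eqref{op inizio riducibilita} gives the desired block decomposition, with $q_0(j)$ equal to the diagonal entries of $\mathcal{Q}$, and the estimates \eqref{stime pre rid} provide \eqref{stime qn} and \eqref{stime cal Rn rid} at $n=0$ (the bound $N_{-1}^{-\alpha}=1$ is trivially consistent). The reality and reversibility of $q_0(j)$, encoded in \eqref{reversibility reality auto}, follow from Lemma \ref{lemma real rev matrici} applied to the real and reversible operator $\mathcal{Q}$.

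For the inductive step, assuming $(S1)_n$, I would seek a map $\Phi_n = \mathrm{Id} + \Psi_n$ with $\Psi_n$ supported on the frequencies $|\ell|, |j-j'|\leq N_n$, such that $\Phi_n^{-1}\mathcal{L}_n\Phi_n = \mathcal{L}_{n+1}$. A direct computation shows that this is achieved, modulo second order contributions, by solving the homological equation
\begin{equation*}
\omega\cdot\partial_\vphi \Psi_n + [\mathcal{D}_n,\Psi_n] + \Pi_{N_n}\mathcal{R}_n = \mathcal{D}_{\Pi_{N_n}\mathcal{R}_n},
\end{equation*}
so that the new normal form is $\mathcal{D}_{n+1} := \mathcal{D}_n + \mathcal{D}_{\Pi_{N_n}\mathcal{R}_n}$, corresponding to eigenvalues $\mu_{n+1}(j) = \mu_n(j) + \widehat{\mathcal{R}_n}(0)_j^{j}$, and the new remainder is
\begin{equation*}
\mathcal{R}_{n+1} := \Phi_n^{-1}\bigl(\Pi_{N_n}^\perp \mathcal{R}_n + \mathcal{R}_n\Psi_n - \Psi_n\mathcal{D}_{\Pi_{N_n}\mathcal{R}_n}\bigr).
\end{equation*}
On the Cantor set $\Lambda_{n+1}^\gamma$ defined in \eqref{insiemi di cantor rid}, the eigenvalue differences $\ii\omega\cdot\ell + \mu_n(j)-\mu_n(j')$ obey the lower bound $\gamma\langle\ell\rangle^{-\tau}|j|^{-\tau}|j'|^{-\tau}$, so the matrix-element solution of the homological equation can be estimated in decay norm (compare Lemma \ref{lemma eq omologica descent}): using $|j|^\tau \lesssim \langle j-j'\rangle^\tau + |j'|^\tau \lesssim N_n^\tau + |j'|^\tau$ on the support of $\Pi_{N_n}$, one absorbs the $|j|^\tau$ factor into $N_n^\tau$ times the base decay, obtaining the Lipschitz bound $|\Psi_n|_{0,s}^{\mathrm{Lip}(\gamma)} \lesssim_s \gamma^{-1}N_n^{4\tau+2}|\mathcal{R}_n|_{-M,s}^{\mathrm{Lip}(\gamma)}$. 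Combined with $(S1)_n$ this yields \eqref{stime Psi n rid} at level $n$; the invertibility of $\Phi_n$ and the estimates on $\Phi_n^{\pm 1}-\mathrm{Id}$ follow from Lemma \ref{proprieta standard norma decay}-$(iv)$ thanks to the smallness condition \eqref{KAM smallness condition}.

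To close the iteration, I would estimate the new remainder in both low and high regularity. The smoothing term obeys $|\Pi_{N_n}^\perp \mathcal{R}_n|_{-M,s}^{\mathrm{Lip}(\gamma)}\leq N_n^{-\beta}|\mathcal{R}_n|_{-M,s+\beta}^{\mathrm{Lip}(\gamma)}$ by Lemma \ref{lemma proiettori decadimento}, and the quadratic contributions are estimated by the product rule Lemma \ref{proprieta standard norma decay}-$(ii)$, producing powers of $|\Psi_n|$. With the choice of exponents $\alpha=(1+\chi^{-1})\tau_1+1$, $\beta=\alpha+1$, $\tau_1=4\tau+2+M$, $\chi=3/2$ in \eqref{definizione alpha beta}, a straightforward bookkeeping shows that the low-norm estimate \eqref{stime cal Rn rid} at level $n+1$ improves by a factor $N_n^{-\alpha}/N_{n-1}^{-\alpha}$ (requiring $N_0$ large), while the high-norm estimate stays controlled by $N_n$ (losing one derivative per step but compensated by the smoothing geometry). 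The eigenvalue variation \eqref{cal Nn - N n - 1} is then immediate from Lemma \ref{proprieta standard norma decay}-$(v)$ applied to $\widehat{\mathcal{R}_n}(0)_j^j$, and the reality, reversibility and purely imaginary character of $\mu_{n+1}(j)$ are preserved because the diagonal part $\mathcal{D}_{\mathcal{R}_n}$ of a real and reversible operator has matrix elements satisfying $q_{n+1}(j)=-q_{n+1}(-j)=\overline{q_{n+1}(-j)}$, by Lemma \ref{lemma real rev matrici}.

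Finally, for $(S2)_n$ I would define $\widetilde{\mu}_n(j;\cdot)$ on $DC(\gamma,\tau)$ by a Kirszbraun-type Lipschitz extension (using that the Lipschitz seminorm of $\mu_n(j;\cdot)$ on $\Lambda_n^\gamma$ is controlled from $(S1)_n$); the telescoping bound \eqref{lambdaestesi} follows since the Lipschitz extension of $q_n - q_{n-1}$ inherits its uniform norm, controlled by $|j|^{-M}|\mathcal{R}_{n-1}|_{-M,s_0}^{\mathrm{Lip}(\gamma)} \lesssim |j|^{-M}\e N_{n-2}^{-\alpha}$ via $(S1)_{n-1}$. The principal obstacle is the tight balance in the iterative scheme: the homological equation loses $N_n^{4\tau+2}$ in the decay norm and $2\tau$ in the order, which must be compensated by the order $M=[4\tau]+1$ of the smoothing remainder (this is why $\mathcal{R}_0$ was pre-regularized down to order $-M$ in Section \ref{sez riduzione ordini bassi}), and the loss must be reabsorbed by the superexponential decay $N_n^{-\alpha}$ with $\alpha$ chosen as in \eqref{definizione alpha beta}; the consistency of these choices together with the smallness condition \eqref{KAM smallness condition} is the true content of the scheme.
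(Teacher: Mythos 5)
Your proposal is correct and follows essentially the same route as the paper's proof: same base case via Proposition \ref{proposizione regolarizzazione ordini bassi}, same homological equation (note $\mathcal{D}_{\Pi_{N_n}\mathcal{R}_n}=\mathcal{D}_{\mathcal{R}_n}$ since the diagonal sits in the range of $\Pi_{N_n}$), an algebraically equivalent expression for $\mathcal{R}_{n+1}$ (using $\Phi_n^{-1}-\mathrm{Id}=-\Phi_n^{-1}\Psi_n$), the same decay-norm estimate $|\Psi_n|_{0,s}^{\mathrm{Lip}(\gamma)}\lesssim N_n^{4\tau+2}\gamma^{-1}|\mathcal{R}_n|_{-M,s}^{\mathrm{Lip}(\gamma)}$, and the same Kirszbraun extension for $({\bf S2})_n$. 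One minor imprecision: the homological-equation solution costs $2\tau$ in order for the sup-norm but $4\tau$ for the Lipschitz seminorm, which is why $M=[4\tau]+1$ (not $[2\tau]+1$) is needed; your closing remark slightly understates this but your stated $M$ is the correct one.
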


\begin{proof}
{\sc Proof of ${\bf (S1)}_0,{\bf (S2)}_0$.} 
The claimed properties follow directly 
from Proposition \ref{proposizione regolarizzazione ordini bassi}, recalling \eqref{op inizio riducibilita}, \eqref{stime pre rid} 
and the definition of $\Lambda_0^\g := DC(\gamma, \tau)$. 

\smallskip

\noindent
{\sc Proof of ${\bf (S1)}_{n+1}$.} 
By induction, we assume the the claimed properties ${\bf (S1)}_n$, ${\bf (S2)}_n$ hold for some $n \geq 0$ and we prove them at the step $n + 1$. 
Let $\Phi_n = {\rm Id} + \Psi_n$ where $\Psi_n$ is an operator 
to be determined. 
We compute 
\begin{equation}\label{primo coniugio Ln Psin}
\begin{aligned}
{\mathcal L}_n \Phi_n & = \Phi_n ( \omega \cdot \partial_\vphi + {\mathcal D}_n )  \\
& \quad + \omega \cdot \partial_\vphi \Psi_n + [{\mathcal D}_n, \Psi_n] + \Pi_{N_n} {\mathcal R}_n  \\
& \quad + \Pi_{N_n}^\bot {\mathcal R}_n + {\mathcal R}_n \Psi_n
\end{aligned}
\end{equation}
where ${\mathcal D}_n := \zeta \cdot \nabla+ {\mathcal Q}_n$ and the projectors $\Pi_N$, $\Pi_N^\bot$ are defined in  \eqref{def proiettore operatori matrici}. 
Our purpose is to find a map $\Psi_n$ solving the {\it homological equation} 
\begin{equation}\label{equazione omologica KAM}
\omega \cdot \partial_\vphi \Psi_n + [{\mathcal D}_n, \Psi_n] + \Pi_{N_n} {\mathcal R}_n ={\mathcal D}_{{\mathcal R}_n}
\end{equation}
where ${\mathcal D}_{{\mathcal R}_n}$ is the diagonal operator as per Definition \ref{def block-diagonal op}.
By  \eqref{matrix representation 1} and \eqref{def.calLn calDn calQn}, the homological equation \eqref{equazione omologica KAM} is equivalent to 
\begin{equation}\label{eq omologica matrici}
	\big( \ii \,\omega \cdot \ell + \mu_{n}(j) - \mu_{n}(j') \big) \widehat \Psi_{n}(\ell)_j^{j'} + \widehat{\mathcal R}_{n}(\ell)_j^{j'} = \widehat{{\mathcal D}_{{\mathcal R}_{n}}}(\ell)_j^{j'}
\end{equation}
for $\ell \in \Z^d$, $j, j' \in \Z^2 \setminus \{ 0 \}$. Therefore, we define the linear operator $\Psi_{n}$ by
\begin{equation}\label{def Psi eq omo KAM}
	\begin{footnotesize}
		\widehat\Psi_{n} (\ell)_j^{j'} := \begin{cases}
			- \dfrac{\widehat{\mathcal R}_{n}(\ell)_j^{j'} }{ \ii \, \omega \cdot \ell + \mu_{n}(j) - \mu_{n}(j')  }, \quad \forall (\ell , j, j') \neq (0, j, j), \quad |\ell|, |j - j'| \leq N_{n}\,, \\
			0 \quad \text{otherwise}\,,
		\end{cases}
	\end{footnotesize}
\end{equation}
which  is the solution of \eqref{eq omologica matrici}. 

\begin{lemma}\label{Lemma eq omologica riducibilita KAM}
The operator $\Psi_n$ in \eqref{def Psi eq omo KAM}, defined for any $(\omega, \zeta) \in \Lambda_{n + 1}^\gamma$, satisfies, for any $s_0\leq s\leq S-\Sigma(\beta)$,
\begin{equation}\label{stime eq omologica}
\begin{aligned}
& |\Psi_n|_{0, s}^{{\rm Lip}(\gamma)}
\lesssim_{s} N_n^{4 \tau +2} \gamma^{- 1} |{\mathcal R}_n|_{- M, s}^{{\rm Lip}(\gamma)}\,, \\
& |\Psi_n|_{0, s + M}^{{\rm Lip}(\gamma)} \lesssim_{s} N_n^{\tau_1} \gamma^{- 1} |{\mathcal R}_n|_{- M, s}\,,
\end{aligned}
\end{equation}
where $\tau_1>1$ is given in \eqref{definizione alpha beta}. Moreover, $\Psi_n$ is real and reversibility preserving.
\end{lemma}

\begin{proof}
To simplify notations, in this proof 
we drop the index $n$.
Since $\lambda = (\omega, \zeta) \in \Lambda_{n + 1}^\gamma$ (see \eqref{insiemi di cantor rid}), one immediately gets the estimate
\begin{equation}\label{stima eq omo KAM 1}
|\widehat\Psi (\ell)_j^{j'}| \lesssim \gamma^{- 1} \langle \ell \rangle^\tau | j |^\tau | j' |^\tau |{\mathcal R}(\ell)_j^{j'}|\,.
\end{equation}
For any $\lambda_1 = (\omega_1, \zeta_1), \lambda_2 = (\omega_2, \zeta_2) \in \Lambda_{n + 1}^\gamma$, we define $\delta_{\ell j j'} :=\ii\, \omega \cdot \ell +\mu(j) - \mu(j')$. By \eqref{stime qn}, \eqref{insiemi di cantor rid} one has 
\begin{equation*}
\begin{aligned}
\Big| \frac{1}{ \delta_{\ell j j'}(\lambda_1)} - \frac{1}{\delta_{\ell j j'}(\lambda_2)} \Big| & \leq \dfrac{|\delta_{\ell j j'}(\lambda_1) - \delta_{\ell j j'}(\lambda_2)|}{|\delta_{\ell j j'}(\lambda_1)| |\delta_{\ell j j'}(\lambda_2)|} \\
& \lesssim \g^{-2} \langle \ell \rangle^{2 \tau + 1} \langle j - j' \rangle |j|^{2 \tau} |j'|^{2 \tau} |\lambda_1 - \lambda_2|\,.
\end{aligned}
\end{equation*}
The latter estimate (recall \eqref{def Psi eq omo KAM}) implies also that 
\begin{equation}\label{Psi lam 12 KAM}
\begin{aligned}
|\widehat\Psi (\ell)_j^{j'}(\lambda_1) &- \widehat\Psi (\ell)_j^{j'}(\lambda_2)|  \lesssim  \langle \ell \rangle^\tau |j|^\tau |j'|^\tau\gamma^{- 1} |\widehat{\mathcal R}(\ell)_j^{j'}(\lambda_1) - \widehat{\mathcal R}(\ell)_j^{j'}(\lambda_2)| \\
& + \langle \ell \rangle^{2 \tau + 1} \langle j - j' \rangle |j|^{2 \tau} |j'|^{2 \tau} \gamma^{- 2} |\widehat{\mathcal R}(\ell)_j^{j'}(\lambda_2)| |\lambda_1 - \lambda_2 |\,. 
\end{aligned}
\end{equation}
Using that $  \langle \ell, j - j' \rangle \leq N $  and the elementary chain of inequalities $ |j| \lesssim |j - j'| + |j'| \lesssim N + |j'| \lesssim N |j'|$,
the estimates \eqref{stima eq omo KAM 1}, \eqref{Psi lam 12 KAM} take the form 
\begin{equation*}
\begin{aligned}
 |\widehat\Psi (\ell)_j^{j'}| & \lesssim  N^{2 \tau}\gamma^{- 1} |j'|^{2 \tau}  |{\mathcal R}(\ell)_j^{j'}|\,, \\
 |\widehat\Psi (\ell)_j^{j'}(\lambda_1) - \widehat\Psi (\ell)_j^{j'}(\lambda_2)| & \lesssim N^{2 \tau} \gamma^{- 1}  |j'|^{2 \tau} |\widehat{\mathcal R}(\ell)_j^{j'}(\lambda_1) - \widehat{\mathcal R}(\ell)_j^{j'}(\lambda_2)| \\
& \qquad + N^{4 \tau + 2}\gamma^{- 2}  |j'|^{ 4 \tau} |\widehat{\mathcal R}(\ell)_j^{j'}(\lambda_2)| |\lambda_1 - \lambda_2 |\,. 
\end{aligned}
\end{equation*}
Since $M > 4 \tau$ by \eqref{definizione alpha beta}, recalling Definition \ref{block norm}, the latter estimates imply that 
$$
\begin{aligned}
& |\Psi|_{0, s}^{\rm sup} \lesssim N^{2 \tau} \gamma^{- 1} |{\mathcal R}|_{- M, s}^{\rm sup}\,, \\
& |\Psi|_{0, s}^{\rm lip} \lesssim  N^{2 \tau} \gamma^{- 1}  |{\mathcal R}|_{- M, s}^{\rm lip} + N^{4 \tau + 2} \gamma^{- 2} |{\mathcal R}|_{- M, s}^{\rm sup}
\end{aligned}
$$
and similarly, using also that $\langle \ell, j - j' \rangle^{M} \lesssim N^M$, 
$$
\begin{aligned}
& |\Psi|_{0, s + M}^{\rm sup} \lesssim N^{2 \tau + M} \gamma^{- 1} |{\mathcal R}|_{- M, s}^{\rm sup}\,, \\
& |\Psi|_{0, s+ M}^{\rm lip} \lesssim  N^{2 \tau + M} \gamma^{- 1}  |{\mathcal R}|_{- M, s}^{\rm lip} + N^{4 \tau +M + 2} \gamma^{- 2} |{\mathcal R}|_{- M, s}^{\rm sup}\,. 
\end{aligned}
$$
Hence, we conclude the claimed bounds in \eqref{stime eq omologica}. Finally, since ${\mathcal R}$ is real and reversible, by Lemma \ref{lemma real rev matrici} and the properties \eqref{reversibility reality auto} for $\mu(j)$, we deduce that $\Psi$ is real and reversibility preserving. 
\end{proof}

By Lemma \ref{Lemma eq omologica riducibilita KAM} and the estimate in \eqref{stime cal Rn rid}, we obtain, for any $s_0\leq s\leq S-\Sigma(\beta)$,
\begin{equation}\label{stime Psin neumann}
\begin{aligned}
|\Psi_n|_{0, s}^{{\rm Lip}(\gamma)} 
& \lesssim_{s}  N_n^{4 \tau + 2} \gamma^{- 1} |{\mathcal R}_n |_{- M, s}^{{\rm Lip}(\gamma)}
\lesssim_{s } N_n^{4 \tau + 2} N_{n - 1}^{- \alpha} 
\e \gamma^{- 1} , \\
|\Psi_n|_{0, s + M}^{{\rm Lip}(\gamma)} \,,\, 
& \lesssim_{s}  N_n^{\tau_1} \gamma^{- 1} |{\mathcal R}_n |_{- M, s}^{{\rm Lip}(\gamma)}
\lesssim_{s } N_n^{\tau_1} N_{n - 1}^{- \alpha} 
\e \gamma^{- 1} , \\
|\Psi_n|_{0, s + \beta}^{{\rm Lip}(\gamma)} & \lesssim_{s,\beta} N_n^{4 \tau + 2} \gamma^{- 1} |{\mathcal R}_n |_{-M, s + \beta}^{{\rm Lip}(\gamma)} \lesssim_{s } N_n^{4 \tau + 2} N_{n - 1} \e \gamma^{- 1}, \\
|\Psi_n|_{0, s + \beta + M}^{{\rm Lip}(\gamma)} & \lesssim_{s,\beta} N_n^{\tau_1} \gamma^{- 1} |{\mathcal R}_n |_{-M, s + \beta}^{{\rm Lip}(\gamma)} \lesssim_{s } N_n^{\tau_1} N_{n - 1} \e \gamma^{- 1},
\end{aligned}
\end{equation} 
which are the estimates \eqref{stime Psi n rid} at the step $n + 1$. By \eqref{definizione alpha beta} and by the smallness condition \eqref{KAM smallness condition}, one has, for any $s_0\leq s\leq S-\Sigma(\beta)$ and for $N_0>0$ large enough,
\begin{equation} \label{2103.1}
|\Psi_n|_{0, s}^{{\rm Lip}(\gamma)} 
\lesssim_s N_n^{4 \tau + 2} N_{n - 1}^{- \alpha} \leq C(s) N_0^{4 \tau + 2} \e \gamma^{- 1} \leq \delta < 1
\end{equation}
Therefore, by Lemma \ref{proprieta standard norma decay}-$(iv)$, 
$\Phi_n = {\rm Id} + \Psi_n$ is invertible 
and 
\begin{equation}\label{stime Phi n inv - Id}
|\Phi_n^{- 1} - {\rm Id}|_{0, s}^{{\rm Lip}(\gamma)} 
\lesssim_{s}  |\Psi_n|_{0, s}^{{\rm Lip}(\gamma)}\,,
\quad 
|\Phi_n^{- 1} - {\rm Id}|_{0, s + \beta}^{{\rm Lip}(\gamma)} 
\lesssim_{s}  |\Psi_n|_{0, s +\beta}^{{\rm Lip}(\gamma)}. 
\end{equation}
Then, we define 
\begin{equation}\label{2003.1} 
\begin{aligned}
{\mathcal L}_{n + 1} & := \omega \cdot \partial_\vphi + {\mathcal D}_{n + 1} + {\mathcal R}_{n + 1}\,,  \\
{\mathcal D}_{n + 1} & :=  \zeta \cdot \nabla  + {\mathcal Q}_{n + 1}\,, \qquad {\mathcal Q}_{n + 1}  : = {\mathcal Q}_n + {\mathcal D}_{{\mathcal R}_n}\,,  \\
{\mathcal R}_{n + 1}& :=\Pi_{N_n}^\bot {\mathcal R}_n +  ( \Phi_n^{- 1} - {\rm Id} )  \big( {\mathcal D}_{{\mathcal R}_n} + \Pi_{N_n}^\bot {\mathcal R}_n\big)+ \Phi_n^{- 1}  {\mathcal R}_n \Psi_n . 
\end{aligned}
\end{equation}
All the operators in \eqref{2003.1}
are defined for any $\lm = (\om,\zeta) \in \Lambda_{n + 1}^\gamma$. 
Since $\Psi_n, \Phi_n, \Phi_n^{- 1}$ are real and reversibility preserving and ${\mathcal D}_n, {\mathcal R}_n$ are real and reversible operators, one gets that ${\mathcal D}_{n + 1}$, ${\mathcal R}_{n + 1}$ are real and reversible operators. 
Moreover, by \eqref{primo coniugio Ln Psin}, \eqref{equazione omologica KAM},
for $(\om,\zeta) \in \Lambda^\g_{n+1}$ one has the 
identity
$\Phi_n^{-1} \mL_n \Phi_n = \mL_{n+1}$, 
which is \eqref{coniugazione rid} at the step $n+1$. 
By Definition \ref{def block-diagonal op} applied to ${\mathcal D}_{{\mathcal R}_n}$, one has that 
\begin{equation}\label{frittata di maccheroni}
\begin{aligned}
{\mathcal Q}_{n + 1} & := {\mathcal Q}_n + {\mathcal D}_{{\mathcal R}_n} = {\rm diag}_{j \in \Z^2 \setminus \{ 0 \}} q_{n + 1}(j)\,, \\
q_{n + 1}(j) & := q_n(j) + \widehat{\mathcal R}_n(0)_j^j\,, \\
{\mathcal D}_{n + 1} &:= \zeta \cdot \nabla + {\mathcal Q}_{n + 1} = {\rm diag}_{j \in \Z^2 \setminus \{ 0 \}} \mu_{n + 1}(j)\,, \\
\mu_{n + 1}(j) & := \ii \,\zeta \cdot j + q_{n + 1}(j)\,.
\end{aligned}
\end{equation}
The reality and the reversibility of ${\mathcal D}_{n + 1}$ and ${\mathcal Q}_{n + 1}$ imply that \eqref{reversibility reality auto} is verified at the step $n + 1$. Moreover, by Lemma \ref{proprieta standard norma decay}-(v)
$$
\begin{aligned}
| \mu_{n + 1}(j) - \mu_n(j)|^{{\rm Lip}(\gamma)} & = | q_{n + 1}(j) - q_n(j)|^{{\rm Lip}(\gamma)}   \\
& \leq | \widehat{\mathcal R}_n(0)_j^j |^{{\rm Lip}(\gamma)} \lesssim |{\mathcal R}_n|_{- M, s_0}^{{\rm Lip}(\gamma)} \langle j \rangle^{- M}\,.
\end{aligned}
$$
Then, the estimate \eqref{stime cal Rn rid}
implies  \eqref{cal Nn - N n - 1} at the step $n + 1$. 
The estimate \eqref{stime qn} at the step $n + 1$ follows, as usual, 
by a telescoping argument, 
using the fact that $\sum_{n \geq 0} N_{n - 1}^{- \alpha}$ is convergent 
since $\alpha > 0$ (see \eqref{definizione alpha beta}). 
Now we prove the estimates \eqref{stime cal Rn rid} at the step $n + 1$. 
By \eqref{2003.1}, estimates 
\eqref{stime Psin neumann}, 
\eqref{2103.1},
\eqref{stime Phi n inv - Id}, 
 Lemma \ref{proprieta standard norma decay}-$(ii),(v)$ and 
Lemma \ref{lemma proiettori decadimento}, 
we get, for any $s_0\leq s \leq S-\Sigma(\beta)$,
\begin{equation*}
\begin{footnotesize}
	\begin{aligned}
		& |{\mathcal R}_{n + 1}|_{- M, s}^{{\rm Lip}(\gamma)} 
		\lesssim_{s} N_n^{- \beta} |{\mathcal R}_n|_{- M, s + \beta}^{{\rm Lip}(\gamma)} + N_n^{\tau_1} \gamma^{- 1} |{\mathcal R}_n|_{- M, s_0}^{{\rm Lip}(\gamma)} |{\mathcal R}_n|_{- M, s}^{{\rm Lip}(\gamma)} \,, \\
		& |{\mathcal R}_{n + 1}|_{- M, s + \beta}^{{\rm Lip}(\gamma)} 
		\lesssim_{s} |{\mathcal R}_n|_{- M, s +\beta}^{{\rm Lip}(\gamma)} + N_{n}^{\tau_1} \gamma^{-1}\big( |{\mathcal R}_n|_{- M, s_0}^{{\rm Lip}(\gamma)} |{\mathcal R}_n|_{- M, s+\beta}^{{\rm Lip}(\gamma)} +  |{\mathcal R}_n|_{- M, s}^{{\rm Lip}(\gamma)} |{\mathcal R}_n|_{- M, s_0+\beta}^{{\rm Lip}(\gamma)} \big) \,. 
	\end{aligned}
\end{footnotesize}
\end{equation*}
By the induction estimate \eqref{stime cal Rn rid}, 
the definition of the constants in \eqref{definizione alpha beta} and the smallness condition in  \eqref{KAM smallness condition}, 
taking $N_0 = N_0(S,  \tau)> 0$ large enough, 
we obtain the estimates \eqref{stime cal Rn rid} at the step $n + 1$. 
\\[1mm]
\noindent
{\sc Proof of ${\bf (S2)}_{n + 1}$.} It remains to construct a Lipschitz extension for the eigenvalues $\mu_{n + 1}(j,\,\cdot\,) : \Lambda_{n + 1}^\gamma \to \ii \,\R$. By the induction hypothesis, there exists a Lipschitz extension of $\mu_n(j,\lambda)$,  denoted by $\widetilde \mu_{n}(j,\lambda)$ to the whole set $DC(\gamma, \tau)$ that satisfies ${\bf (S2)}_n$. By \eqref{frittata di maccheroni}, we have $\mu_{n + 1}(j) = \mu_n(j) + r_n(j)$ where $r_n(j)=r_n(j,\lambda) := \widehat{\mathcal R}_n(0;\lambda)_j^j$ satisfies $|r_n(j)|^{{\rm Lip}(\gamma)} \lesssim  N_{n - 1}^{- \alpha} |j|^{- M} \e$. By the reversibility and the reality of ${\mathcal R}_n$, we have $r_n(j) = - r_n(- j)=\overline{r_n(-j)} $, implying that $r_n(j) \in  \ii\, \R$. Hence by the Kirszbraun Theorem (see Lemma M.5 \cite{KukPo}) there exists a Lipschitz extension $\widetilde r_n(j,\,\cdot\,) : DC(\gamma, \tau) \to \ii \R$ of $r_n(j,\,\cdot\,) : \Lambda_{n + 1}^\gamma \to \ii \,\R$ satisfying $|\widetilde r_n(j)|^{{\rm Lip}(\gamma)} \lesssim |r_n(j)|^{{\rm Lip}(\gamma)} \lesssim N_{n - 1}^{- \alpha} |j|^{- M} \e$. The claimed statement then follows by defining $\widetilde \mu_{n + 1}(j) := \widetilde  \mu_n(j) + \widetilde r_n(j)$. 
\end{proof}

\subsection{KAM reducibility: convergence}\label{sez convergenza KAM}

In this section we prove that the KAM reducibility scheme for the operator $\mL_{e}^{(2)}$, whose iterative step is described in Proposition \ref{prop riducibilita}, is convergent under the smallness condition \eqref{KAM smallness condition} with the final operator being diagonal with purely imaginary eigenvalues.

\begin{lemma}\label{lemma blocchi finali}
For any $j \in \Z^2 \setminus \{ 0 \}$, the sequence 
$\{ \widetilde \mu_n(j)= \ii\, \zeta \cdot j + q_n(j) \}_{n\in\N}$ 
converges to some limit
\begin{equation*}\label{def cal N infty nel lemma}
\mu_\infty(j) = \ii \,\zeta \cdot j + q_\infty(j)\,, \quad \mu_\infty(j)= \mu_\infty(j,\,\cdot\,):DC(\gamma,\tau)\to \ii\,\R\,,
\end{equation*}
satisfying the following estimates 
\begin{equation}\label{stime forma normale limite}
\begin{aligned}
& |  \mu_\infty(j) - \widetilde \mu_n(j) |^{{\rm Lip}(\gamma)} = | q_\infty(j) - \widetilde q_n(j) |^{{\rm Lip}(\gamma)} \lesssim  N_{n - 1}^{- \alpha} |j|^{- M} \e  \,, \\
& | q_\infty(j)|^{{\rm Lip}(\gamma)} \lesssim   |j|^{- 1} \e \,. 
\end{aligned}
\end{equation}
\end{lemma}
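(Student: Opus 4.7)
The plan is a standard telescoping argument based on the bound \eqref{lambdaestesi} inherited from $(S2)_n$ of Proposition \ref{prop riducibilita}. First, I would fix $j \in \Z^2 \setminus \{ 0 \}$ and show that the sequence $\{ \widetilde \mu_n(j) \}_{n \in \N}$ is Cauchy in the $|\cdot|^{{\rm Lip}(\gamma)}$ norm on $DC(\gamma, \tau)$. Indeed, by \eqref{lambdaestesi}, for any $m > n \geq 0$,
\begin{equation*}
| \widetilde \mu_m(j) - \widetilde \mu_n(j) |^{{\rm Lip}(\gamma)} \leq \sum_{k = n}^{m - 1} | \widetilde \mu_{k + 1}(j) - \widetilde \mu_k(j)|^{{\rm Lip}(\gamma)} \lesssim \e |j|^{- M} \sum_{k = n}^{m - 1} N_{k - 1}^{- \alpha}\,.
\end{equation*}
Since $\alpha > 0$ and $N_k = N_0^{\chi^k}$ with $\chi = 3/2$ (see \eqref{definizione alpha beta}), the geometric-type tail $\sum_{k \geq n} N_{k - 1}^{- \alpha}$ is convergent and controlled by its first term $N_{n - 1}^{- \alpha}$ (up to a multiplicative constant independent of $n$). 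This proves the Cauchy property and yields a Lipschitz limit
\begin{equation*}
q_\infty(j,\,\cdot\,) : DC(\gamma, \tau) \to \C, \qquad q_\infty(j) := \lim_{n \to + \infty} \widetilde q_n(j)\,.
\end{equation*}
Defining $\mu_\infty(j) := \ii\, \zeta \cdot j + q_\infty(j)$, the first bound in \eqref{stime forma normale limite} follows by letting $m \to + \infty$ in the telescoping estimate above.

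Next, to see that $\mu_\infty(j) \in \ii\,\R$, I would use that by construction (cf. property \eqref{reversibility reality auto}) each $\widetilde \mu_n(j)$ is purely imaginary, hence so is the pointwise limit; equivalently $q_\infty(j) \in \ii\,\R$. Finally, to prove the second estimate in \eqref{stime forma normale limite}, I would combine the initialization bound on $q_0$, coming from \eqref{stime pre rid} (which gives $|q_0(j)|^{{\rm Lip}(\gamma)} \lesssim \e |j|^{- 1}$), with the telescoping bound
\begin{equation*}
| q_\infty(j) - q_0(j) |^{{\rm Lip}(\gamma)} \leq \sum_{k \geq 0} |q_{k+1}(j) - q_k(j)|^{{\rm Lip}(\gamma)} \lesssim \e |j|^{- M} \sum_{k \geq 0} N_{k - 1}^{- \alpha} \lesssim \e |j|^{- M}\,,
\end{equation*}
which uses \eqref{cal Nn - N n - 1}. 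Since $M \geq 1$, writing $q_\infty(j) = q_0(j) + (q_\infty(j) - q_0(j))$ and summing the two bounds gives $|q_\infty(j)|^{{\rm Lip}(\gamma)} \lesssim \e |j|^{- 1}$, as claimed.

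The only delicate point is bookkeeping the Lipschitz seminorm across the extensions: each $\widetilde \mu_n(j)$ lives on $DC(\gamma, \tau)$ (not on the shrinking sets $\Lambda_n^\gamma$), so the telescoping is legitimate in the $|\cdot|^{{\rm Lip}(\gamma)}$ norm on the fixed domain $DC(\gamma, \tau)$, and no further Kirszbraun extension is needed for the limit since the Cauchy property already yields a Lipschitz map on $DC(\gamma, \tau)$. Everything else is just summation of the geometric series in $N_{k-1}^{-\alpha}$.
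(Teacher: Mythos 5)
Your proof is correct and follows essentially the same route as the paper: the Cauchy property of the extended eigenvalues $\widetilde\mu_n(j)$ on the fixed set $DC(\gamma,\tau)$ via telescoping with \eqref{lambdaestesi}, and the two bounds in \eqref{stime forma normale limite} obtained by summing the geometric-type series and combining with the initialization \eqref{stime pre rid}. The paper's proof is just a terser version of your argument, so nothing further is needed.
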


\begin{proof}
By Proposition \ref{prop riducibilita}-${\bf (S2)}_n$, we have that the sequence $\{\widetilde \mu_n(j,\lambda)\}_{n\in\N}\subset \ii\,\R$ is Cauchy on the closed set $DC(\gamma,\tau)$,  therefore it is convergent for any $ \lambda \in DC(\gamma,\tau)$. The estimate \eqref{stime forma normale limite} follows by a telescoping argument with the estimate \eqref{lambdaestesi}. 
\end{proof}
We define the set $\Lambda_\infty^\gamma$ of the non-resonance conditions for the final eigenvalues as 
\begin{equation}\label{cantor finale ridu}
\begin{aligned}
\Lambda_\infty^\gamma 
:= \Big\{ \lambda \in DC(\g,\t) 
& : |\ii \,\omega \cdot \ell + \mu_\infty(j) -  \mu_\infty(j')| \geq \frac{2\gamma}{\langle \ell \rangle^\tau | j |^\tau | j' |^\tau}, \\
&  \ \ \forall \ell \in \Z^d, \ \ j, j' \in \Z^2 \setminus \{ 0 \}, 
\ \ (\ell, j, j') \neq (0, j, j)\Big\}\,. 
\end{aligned}
\end{equation}

\begin{lemma}\label{prima inclusione cantor}
We have $\Lambda_\infty^\gamma \subseteq \cap_{n \geq 0} \, \Lambda_n^\gamma$.
\end{lemma}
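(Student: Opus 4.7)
The plan is to prove $\Lambda_\infty^\gamma \subseteq \Lambda_n^\gamma$ by induction on $n\geq 0$. The base case is immediate: $\Lambda_0^\gamma=DC(\gamma,\tau)$ by definition, and the inclusion $\Lambda_\infty^\gamma\subseteq DC(\gamma,\tau)$ is built into \eqref{cantor finale ridu}.

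For the inductive step, assume $\Lambda_\infty^\gamma\subseteq \Lambda_n^\gamma$ and fix $\lambda\in\Lambda_\infty^\gamma$. Since the Lipschitz extension $\widetilde\mu_n(j;\cdot)$ provided by $\mathbf{(S2)}_n$ coincides with $\mu_n(j;\cdot)$ on $\Lambda_n^\gamma$, the inductive hypothesis gives $\mu_n(j;\lambda)=\widetilde\mu_n(j;\lambda)$ for every $j$. I then verify the Melnikov-type bound defining $\Lambda_{n+1}^\gamma$, namely
$$|\ii\,\omega\cdot\ell + \mu_n(j) - \mu_n(j')| \geq \gamma\,\langle\ell\rangle^{-\tau}|j|^{-\tau}|j'|^{-\tau}$$
for $(\ell,j,j')\neq(0,j,j)$ with $|\ell|,|j-j'|\leq N_n$, starting from the splitting
$$\ii\,\omega\cdot\ell + \mu_n(j)-\mu_n(j') = \bigl(\ii\,\omega\cdot\ell + \mu_\infty(j)-\mu_\infty(j')\bigr) + \bigl(\widetilde\mu_n(j)-\mu_\infty(j)\bigr) - \bigl(\widetilde\mu_n(j')-\mu_\infty(j')\bigr).$$
The first summand is bounded below by $2\gamma\langle\ell\rangle^{-\tau}|j|^{-\tau}|j'|^{-\tau}$ because $\lambda\in\Lambda_\infty^\gamma$, while Lemma \ref{lemma blocchi finali} controls the remaining ones by $C N_{n-1}^{-\alpha}(|j|^{-M}+|j'|^{-M})\e$.

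The remaining task is to absorb this correction into half of the leading lower bound, i.e.\ to show
$$N_{n-1}^{-\alpha}\,\e\,\langle\ell\rangle^\tau|j|^\tau|j'|^\tau\bigl(|j|^{-M}+|j'|^{-M}\bigr)\lesssim\gamma.$$
Using $|\ell|\leq N_n$ and $|j-j'|\leq N_n$, a short case distinction on whether $\min(|j|,|j'|)\lesssim N_n$ or not, combined with $M>2\tau$ (which follows from \eqref{definizione alpha beta}), yields $|j|^\tau|j'|^\tau(|j|^{-M}+|j'|^{-M})\lesssim N_n^{2\tau}$. Together with $\langle\ell\rangle^\tau\leq N_n^\tau$, this reduces the inequality to $N_n^{3\tau}N_{n-1}^{-\alpha}\,\e\gamma^{-1}\lesssim 1$.

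Finally, $\alpha=(1+\chi^{-1})\tau_1+1$ with $\tau_1\geq 4\tau+2$ and $\chi=3/2$ entails $\alpha>3\tau\chi$, so $N_n^{3\tau}N_{n-1}^{-\alpha}=N_{n-1}^{3\tau\chi-\alpha}$ is uniformly bounded (in fact decaying in $n$), and the required inequality follows from the smallness condition \eqref{KAM smallness condition}, $N_0^{\tau_1}\e\gamma^{-1}\leq\delta$. I do not expect any serious obstacle here; the only point requiring mild care is the case analysis producing the $N_n^{2\tau}$ bound, which hinges on the choice $M>2\tau$ made back in \eqref{definizione alpha beta}.
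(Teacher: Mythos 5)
Your proof is correct and follows essentially the same route as the paper's: induction on $n$, the triangle-inequality splitting through the limit eigenvalues $\mu_\infty$, the bound from Lemma \ref{lemma blocchi finali}, and absorption of the error into half of the lower bound from \eqref{cantor finale ridu}. If anything, your version is slightly more careful than the paper's, since you retain the factor $N_{n-1}^{-\alpha}$ when closing the smallness condition (the paper drops it and invokes \eqref{KAM smallness condition} directly), which is exactly what makes the estimate uniform in $n$.
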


\begin{proof}
We prove by induction that $\Lambda_\infty^\gamma \subseteq \Lambda_n^\gamma$ 
for any integer $n \geq 0$. 
The statement is trivial for $n=0$,
since $\Lambda_0^\gamma := DC(\g,\t)$ 
(see Proposition \ref{prop riducibilita}).
We now assume by induction that $\Lambda_\infty^\gamma \subseteq \Lambda_n^\gamma$ for some $n \geq 0$ and we  show that $\Lambda_\infty^\gamma \subseteq \Lambda_{n + 1}^\gamma$. Let $\lambda \in \Lambda_\infty^\gamma$, $\ell \in \Z^d$, $j, j' \in \Z^2 \setminus \{ 0 \}$, with $(\ell, j, j') \neq (0, j, j)$ and $|\ell|, |j - j'| \leq N_n$. By \eqref{stime forma normale limite}, \eqref{cantor finale ridu}, we compute
$$
\begin{aligned}
|\ii \,\omega \cdot \ell + \mu_n(j) - \mu_n(j')|  & \geq |\ii \,\omega \cdot \ell + \mu_\infty(j) - \mu_\infty(j')| - |\mu_\infty(j) - \mu_n(j)| \\
& \qquad  - |\mu_\infty(j') - \mu_n(j')| \\
& \geq \frac{2\gamma}{\langle \ell \rangle^\tau | j |^\tau | j' |^\tau} - C N_{n - 1}^{- \alpha} \e \Big( |j|^{- M} + |j'|^{- M} \Big)   \\
& \geq \frac{\gamma}{\langle \ell \rangle^\tau | j |^\tau | j' |^\tau}
\end{aligned}
$$
for some positive constant $C>0$, provided
\begin{equation}\label{frittata di maccheroni 10}
C \e \gamma^{- 1}\langle \ell \rangle^\tau | j |^\tau | j' |^\tau \Big( |j|^{- M} + |j'|^{- M} \Big) \leq 1\,.
\end{equation}
Using that $ |\ell|, |j - j'| \leq N_n$ and the chain of inequalities
\begin{equation*}
	\begin{aligned}
		& |j| \leq |j'| + |j - j'| \leq |j'| + N_n \lesssim N_n |j'|\,, 
	\end{aligned}
\end{equation*}
we deduce that, for some $C_0>0$ and recalling that $M > 4 \tau$ by \eqref{definizione alpha beta},
$$
\langle \ell \rangle^\tau | j |^\tau | j' |^\tau \Big( |j|^{- M} + |j'|^{- M} \Big) \leq C_0 N_n^{2 \tau}\,.
$$
Therefore, \eqref{frittata di maccheroni 10} is verified provided $C C_0 N_n^{2 \tau} \e \gamma^{- 1} \leq 1\,.$ The latter inequality is implied by the smallness condition \eqref{KAM smallness condition}. We conclude that $\lambda = (\omega, \zeta) \in \Lambda_{n + 1}^\gamma$. 
\end{proof}

Now we define the sequence of invertible maps
\begin{equation}\label{trasformazioni tilde ridu}
\widetilde \Phi_n := \Phi_0 \circ \Phi_1 \circ \ldots \circ \Phi_n, \quad n \in \N\,. 
\end{equation}


\begin{proposition}\label{lemma coniugio finale}
Let $S > s_0 + \Sigma(\beta)$. There exists $\delta := \delta (S, \tau, d) > 0$ such that, if  \eqref{ansatz} \eqref{KAM smallness condition} are verified, then the following holds. 
For any $\lambda = (\omega, \zeta) \in \Lambda_\infty^\gamma$, the sequence $(\widetilde \Phi_n)_{n\in\N}$
converges in norm $| \cdot |_{0, s}^{{\rm Lip}(\gamma)}$ 
to an invertible map $\Phi_\infty$, 
satisfying, for any $s_0\leq s\leq S-\Sigma(\beta)$,
\begin{equation}\label{stima Phi infty}
\begin{aligned}
& |\Phi_\infty^{\pm 1} - \widetilde \Phi_n^{\pm 1}|_{0, s}^{{\rm Lip}(\gamma)} 
\lesssim_{s} N_{n + 1}^{4 \tau + 2} N_n^{- \alpha} \e \gamma^{- 1} \,,
\quad 
|\Phi_\infty^{\pm 1} - {\rm Id}|_{0, s}^{{\rm Lip}(\gamma)} 
\lesssim_{s}  \e \gamma^{- 1} \,.
\end{aligned}
\end{equation}
The operators $\Phi_\infty^{\pm 1} : H^s_0 \to H^s_0$ are real and reversibility preserving. Moreover, for any $\lambda \in \Lambda_\infty^\gamma$, one has 
\begin{equation}\label{cal L infty e}
	{\mathcal L}_{ e}^{(\infty)} := \Phi_\infty^{- 1} {\mathcal L}_e^{(2)} \Phi_\infty = \omega \cdot \partial_\vphi + {\mathcal D}_\infty\,, \quad {\mathcal D}_\infty := {\rm diag}_{j \in \Z^2 \setminus \{ 0 \}} \mu_\infty(j) 
\end{equation} 
where the operator ${\mathcal L}_e^{(2)}$ is given in \eqref{def cal L (3)}-\eqref{op inizio riducibilita} 
and  the final eigenvalues $\mu_\infty(j)$ are given in Lemma \ref{lemma blocchi finali}.
\end{proposition}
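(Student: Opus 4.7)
The strategy is to show that the sequence of compositions $\widetilde\Phi_n=\Phi_0\circ\cdots\circ\Phi_n$ forms a Cauchy sequence in the decay norm $|\cdot|_{0,s}^{{\rm Lip}(\gamma)}$ on the set $\Lambda_\infty^\gamma$, and then identify the limit as the desired straightening map. Writing $\widetilde\Phi_n=\widetilde\Phi_{n-1}\circ\Phi_n$, hence $\widetilde\Phi_n-\widetilde\Phi_{n-1}=\widetilde\Phi_{n-1}\Psi_n$, and invoking the composition estimate of Lemma \ref{proprieta standard norma decay}-$(ii)$, one gets
$$
|\widetilde\Phi_n-\widetilde\Phi_{n-1}|_{0,s}^{{\rm Lip}(\gamma)}\lesssim_s |\widetilde\Phi_{n-1}|_{0,s}^{{\rm Lip}(\gamma)} |\Psi_n|_{0,s_0}^{{\rm Lip}(\gamma)} + |\widetilde\Phi_{n-1}|_{0,s_0}^{{\rm Lip}(\gamma)} |\Psi_n|_{0,s}^{{\rm Lip}(\gamma)}.
$$
I then proceed by induction on $n$ to show $|\widetilde\Phi_n|_{0,s}^{{\rm Lip}(\gamma)}\lesssim_s 1$. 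The base case uses the low-norm bound \eqref{stime Psi n rid}, which sums as
$$
\sum_n |\Psi_n|_{0,s_0}^{{\rm Lip}(\gamma)}\lesssim \e\gamma^{-1}\sum_n N_n^{4\tau+2}N_{n-1}^{-\alpha}\lesssim \e\gamma^{-1},
$$
since the choice of $\alpha,\tau_1$ in \eqref{definizione alpha beta} yields $\chi(4\tau+2)-\alpha<0$, making the series super-exponentially convergent. Under the smallness condition \eqref{KAM smallness condition} this sum is in particular $\leq 1/2$, so a Neumann-type bootstrap (cf. Lemma \ref{proprieta standard norma decay}-$(iv)$) controls all the low norms uniformly in $n$.

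Once the uniform bound in $s_0$ is established, the same telescoping argument together with the higher-regularity bound in \eqref{stime Psi n rid} produces
$$
\sum_{k\geq n+1}|\widetilde\Phi_k-\widetilde\Phi_{k-1}|_{0,s}^{{\rm Lip}(\gamma)}\lesssim_s \e\gamma^{-1}\sum_{k\geq n+1}N_k^{4\tau+2}N_{k-1}^{-\alpha}\lesssim_s \e\gamma^{-1} N_{n+1}^{4\tau+2}N_n^{-\alpha},
$$
so $(\widetilde\Phi_n)_{n\in\N}$ is Cauchy in $|\cdot|_{0,s}^{{\rm Lip}(\gamma)}$ for every $s_0\leq s\leq S-\Sigma(\beta)$. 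The limit $\Phi_\infty$ satisfies the first bound in \eqref{stima Phi infty}, and taking $n=-1$ with the convention $\widetilde\Phi_{-1}:={\rm Id}$ gives the second. Invertibility of $\Phi_\infty$ and the bounds for $\Phi_\infty^{-1}$ follow from Lemma \ref{proprieta standard norma decay}-$(iv)$ applied to $\Phi_\infty-{\rm Id}$, together with the identity $\Phi_\infty^{-1}-\widetilde\Phi_n^{-1}=\Phi_\infty^{-1}(\widetilde\Phi_n-\Phi_\infty)\widetilde\Phi_n^{-1}$ and Lemma \ref{proprieta standard norma decay}-$(ii)$. Reality and reversibility-preserving properties pass to the limit since each $\Phi_n$ enjoys them by Proposition \ref{prop riducibilita} and these are closed conditions in the decay norm, being characterized by pointwise identities on matrix entries (Lemma \ref{lemma real rev matrici}).

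Finally, to identify the conjugation \eqref{cal L infty e}, I iterate \eqref{coniugazione rid} to get $\mL_n=\widetilde\Phi_{n-1}^{-1}\mL_0\widetilde\Phi_{n-1}=\widetilde\Phi_{n-1}^{-1}\mL_e^{(2)}\widetilde\Phi_{n-1}$ on each $\Lambda_n^\gamma$. By Lemma \ref{prima inclusione cantor}, $\Lambda_\infty^\gamma\subseteq\Lambda_n^\gamma$, so the identity holds on $\Lambda_\infty^\gamma$ for all $n$. As $n\to\infty$, the remainder $\mR_n\to 0$ in $|\cdot|_{-M,s}^{{\rm Lip}(\gamma)}$ by \eqref{stime cal Rn rid}, while Lemma \ref{lemma blocchi finali} gives $\mD_n\to\mD_\infty$ entrywise (with uniform control in $|\cdot|_{-1,s}^{{\rm Lip}(\gamma)}$). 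Combined with the norm convergence $\widetilde\Phi_{n-1}^{\pm 1}\to\Phi_\infty^{\pm 1}$ established above and the composition estimate in Lemma \ref{proprieta standard norma decay}-$(ii)$, one passes to the limit in both sides of the identity and obtains $\Phi_\infty^{-1}\mL_e^{(2)}\Phi_\infty=\omega\cdot\partial_\vphi+\mD_\infty$ on $\Lambda_\infty^\gamma$. The main technical hurdle is the tension between the loss of derivatives in the homological equation (responsible for the $N_n^{4\tau+2}$ and $N_n^{\tau_1}$ factors in \eqref{stime Psi n rid}) and the gain $N_{n-1}^{-\alpha}$ coming from the smoothing projectors: the calibration of $\alpha,\beta,M,\tau_1$ in \eqref{definizione alpha beta} is precisely what is needed to simultaneously control the low-norm sum (ensuring invertibility) and the high-norm Cauchy sum (ensuring convergence at all intermediate regularities up to $S-\Sigma(\beta)$).
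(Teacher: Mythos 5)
Your proposal is correct and follows essentially the same route as the paper: the paper's own proof is just a few lines that defer the telescoping/Cauchy argument for $\widetilde\Phi_n$ to Corollary 4.1 of \cite{BBM-Airy} and then passes to the limit in $\widetilde\Phi_n^{-1}\mathcal{L}_0\widetilde\Phi_n=\omega\cdot\partial_\vphi+\mathcal{D}_n+\mathcal{R}_n$ using \eqref{stime cal Rn rid}, \eqref{stima Phi infty} and Lemma \ref{lemma blocchi finali}, exactly as you do. Your write-up simply makes the deferred steps explicit (summability of $N_n^{4\tau+2}N_{n-1}^{-\alpha}$, uniform bounds on $\widetilde\Phi_n^{\pm1}$, the resolvent identity for the inverses), and these details check out.
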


\begin{proof}
The existence of the invertible map $\Phi_\infty^{\pm 1}$ and the estimates \eqref{stima Phi infty} follow by 
\eqref{stime Psi n rid}, 
\eqref{trasformazioni tilde ridu}, 
arguing as in Corollary 4.1 in \cite{BBM-Airy}. 
By \eqref{trasformazioni tilde ridu}, Lemma \ref{prima inclusione cantor} 
and Proposition \ref{prop riducibilita}, one has 
$\widetilde \Phi_n^{- 1} {\mathcal L}_0 \widetilde \Phi_n 
= \omega \cdot \partial_\vphi + {\mathcal D}_n + {\mathcal R}_n$ for all $n \geq 0$. 
The claimed statement then follows by passing to the limit as $n \to \infty$, 
by using \eqref{stime cal Rn rid}, \eqref{stima Phi infty} and Lemma \ref{lemma blocchi finali}. 
\end{proof}

\section{Inversion of the linearized Navier Stokes operator $\mL_\nu$}\label{sez inversioni}
The main purpose of this section is to prove the invertibility of the operators $\mL_\nu$ in \eqref{operatore linearizzato}, for any value of the viscosity $\nu > 0$ with a smallness condition on $\e$ which is independent of $\nu$. We also provide the invertibility of $\mL_{e}$, which we shall to construct an approximate solution up to order $O(\nu^2)$ in Section \ref{sez soluzione approx}. We use the normal form reduction implemented in Section \ref{sez riducibilita}. First, we recollect all the terms.
By \eqref{def cal L (3)} and by Lemma \ref{lemma coniugio finale}, for any $\lambda \in \Lambda_\infty^\gamma$, the operator $\mL^{(2)}_\nu$ in \eqref{def cal L (3)} is conjugated to
\begin{equation}\label{forma cal L infty viscosity}
\begin{aligned}
{\mathcal L}^{(\infty)}_\nu & := \Phi_\infty^{- 1} {\mathcal L}^{(2)}_\nu \Phi_\infty = \Phi_\infty^{- 1} {\mathcal L}_e^{(2)} \Phi_\infty - \nu \Phi_\infty^{- 1} \Delta \Phi_\infty + \Phi_\infty^{- 1} {\mathcal R}_\nu^{(2)} \Phi_\infty \\
& = {\mathcal L}_{e}^{(\infty)}   - \nu \Delta + {\mathcal R}_{\nu}^{(\infty)}\,, \\
 {\mathcal R}_{\nu}^{(\infty)}& := - \nu \big( \Delta (\Phi_\infty - {\rm Id}) + (\Phi_\infty^{- 1} - {\rm Id}) \Delta \Phi_\infty  \big) + \Phi_\infty^{- 1} {\mathcal R}_{\nu}^{(2)} \Phi_\infty 
\end{aligned}
\end{equation}
By the estimates \eqref{stima cal Z cal R (3)}, \eqref{stima Phi infty}, using that $|\Delta|_{2, s} \leq 1$, for any $s > s_0$ and for $S > s_0 + \Sigma(\beta) + 2$, there exists $\delta := \delta(S, \tau, d) \in (0, 1)$ such that if \eqref{ansatz}, \eqref{condizione piccolezza rid trasporto} hold, by applying Lemma \ref{proprieta standard norma decay}-$(ii)$, one gets 
\begin{equation}\label{stima R infty n}
|{\mathcal R}_{\nu}^{(\infty)}|^{{\rm Lip}(\gamma)}_{2, s} \lesssim_s \e  \gamma^{- 1}\,\nu\,, \quad \forall s_0 \leq s \leq S - \Sigma(\beta) - 2\,. 
\end{equation}

\begin{lemma}[{\textbf{Inversion of ${\mathcal L}_\nu^{(\infty)}$}}]
\label{lemma inversione cal L infty} 
For any $S > s_0 + \Sigma(\beta) + 4$, there exists 
$\delta := \delta (s, \tau, d) \in (0,1)$ small enough such that, if \eqref{ansatz} holds and $\e \gamma^{- 1} \leq \delta$, the operator ${\mathcal L}_\nu^{(\infty)}$ is invertible for any $\nu > 0$ with bounded inverse $({\mathcal L}_\nu^{(\infty)})^{- 1} \in {\mathcal B}(H^s_0)$ for any $s_0\leq s\leq S-\Sigma(\beta) - 4$. Moreover, the inverse operator $(\mL_{\nu}^{(\infty)})^{-1}$ is smoothing of order two, that is $({\mathcal L}_\nu^{(\infty)})^{- 1} (- \Delta) \in {\mathcal B}(H^s_0)$, with $\| ({\mathcal L}_\nu^{(\infty)})^{- 1} (- \Delta)\|_{{\mathcal B}(H^s_0)} \lesssim_s \nu^{- 1}$. 
\end{lemma}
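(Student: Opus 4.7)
\begin{pf}[Proof proposal]
The plan is to write $\mathcal{L}_\nu^{(\infty)} = \mathcal{A}_\nu + \mathcal{R}_\nu^{(\infty)}$, where
\[
\mathcal{A}_\nu := \omega \cdot \partial_\vphi + \mathcal{D}_\infty - \nu \Delta
\]
is the ``diagonal part.'' The strategy is to first invert $\mathcal{A}_\nu$ by explicit Fourier computation, showing that $\mathcal{A}_\nu^{-1}$ gains two space derivatives with norm $O(\nu^{-1})$, and then handle the perturbation $\mathcal{R}_\nu^{(\infty)}$ by a Neumann series argument, exploiting that the loss of two derivatives by $\mathcal{R}_\nu^{(\infty)}$ is compensated by the gain of two from $\mathcal{A}_\nu^{-1}$, while the explosion $\nu^{-1}$ is compensated by the smallness $O(\varepsilon\gamma^{-1}\nu)$ of $\mathcal{R}_\nu^{(\infty)}$ in \eqref{stima R infty n}.

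First I would observe that, since the Euler solution $v_e$ is odd in $(\vphi,x)$, the reversibility/reality properties \eqref{reversibility reality auto} are preserved in the limit, so $q_\infty(j)$ is purely imaginary; writing $q_\infty(j) = \ii r_j^\infty$ with $r_j^\infty \in \R$, the eigenvalues of $\mathcal{A}_\nu$ read
\[
\ii(\omega\cdot\ell + \zeta\cdot j + r_j^\infty) + \nu |j|^2, \qquad (\ell,j) \in \Z^d \times (\Z^2\setminus\{0\}).
\]
Since the imaginary part is real-valued and the real part equals $\nu|j|^2$, the modulus is bounded below by $\nu|j|^2$. This immediately gives the explicit Fourier multiplier inverse $\mathcal{A}_\nu^{-1}$, defined on $H^s_0$ for every $\lambda \in \Lambda_\infty^\gamma$ and every $\nu>0$, satisfying, by a direct estimate in $\ell^2$-norm on Fourier coefficients,
\[
\| \mathcal{A}_\nu^{-1}(-\Delta) u \|_s^{\Lip(\g)} \leq \nu^{-1} \| u \|_s^{\Lip(\g)}, \qquad \forall s \geq 0.
\]
The Lipschitz-in-$\lambda$ estimate follows by the usual computation, using \eqref{stime forma normale limite} to control the Lipschitz variation of $r_j^\infty$ and the fact that the lower bound $\nu|j|^2$ is uniform in $\lambda$.

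Next I would factor
\[
\mathcal{L}_\nu^{(\infty)} = \mathcal{A}_\nu \bigl( \mathrm{Id} + \mathcal{A}_\nu^{-1} \mathcal{R}_\nu^{(\infty)} \bigr).
\]
Writing $\mathcal{A}_\nu^{-1} \mathcal{R}_\nu^{(\infty)} = \bigl( \mathcal{A}_\nu^{-1}(-\Delta) \bigr) \bigl( (-\Delta)^{-1} \mathcal{R}_\nu^{(\infty)} \bigr)$, Lemma~\ref{proprieta standard norma decay}-$(ii)$ together with \eqref{decay laplace} and \eqref{stima R infty n} yields
\[
\bigl| \mathcal{A}_\nu^{-1} \mathcal{R}_\nu^{(\infty)} \bigr|_{0,s}^{\Lip(\g)} \lesssim_s \nu^{-1} \cdot \varepsilon \gamma^{-1}\nu = \varepsilon \gamma^{-1},
\]
which, by the smallness assumption $\varepsilon\gamma^{-1} \leq \delta$, allows the application of Lemma~\ref{proprieta standard norma decay}-$(iv)$ to invert $\mathrm{Id} + \mathcal{A}_\nu^{-1}\mathcal{R}_\nu^{(\infty)}$ via Neumann series with bounded inverse on $H^s_0$, uniformly in $\nu > 0$. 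Consequently
\[
(\mathcal{L}_\nu^{(\infty)})^{-1} = \bigl( \mathrm{Id} + \mathcal{A}_\nu^{-1}\mathcal{R}_\nu^{(\infty)} \bigr)^{-1} \mathcal{A}_\nu^{-1},
\]
and multiplying by $-\Delta$ on the right, the gain-of-two-derivatives estimate $\| (\mathcal{L}_\nu^{(\infty)})^{-1}(-\Delta) \|_{\mathcal{B}(H^s_0)} \lesssim_s \nu^{-1}$ follows at once.

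The only delicate point is verifying that $\mathcal{R}_\nu^{(\infty)}$ actually acts boundedly from $H^{s+2}_0$ to $H^s_0$ with the stated $O(\varepsilon\gamma^{-1}\nu)$ bound uniformly in $\nu$, but this is exactly the content of \eqref{stima R infty n}; no new smallness relation between $\varepsilon$ and $\nu$ is ever required, which is the whole point of having pushed the reducibility scheme to produce the factor $\nu$ in front of $\mathcal{R}_\nu^{(\infty)}$.
\end{pf}
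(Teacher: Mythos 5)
Your proposal is correct and follows essentially the same route as the paper: the same splitting $\mathcal{L}_\nu^{(\infty)} = \mathtt{L}_\nu^{(\infty)} + \mathcal{R}_\nu^{(\infty)}$ with the diagonal part inverted explicitly via the lower bound $|\ii(\omega\cdot\ell+\zeta\cdot j + r_j^\infty)+\nu|j|^2|\geq\nu|j|^2$ (coming from reversibility), followed by the factorization through $(-\Delta)$ and a Neumann series in which the $\nu^{-1}$ of the inverse cancels against the $O(\e\gamma^{-1}\nu)$ size of the remainder. The only cosmetic difference is that the paper runs the Neumann series in the operator norm $\mathcal{B}(H^s_0)$ rather than in the decay norm $|\cdot|_{0,s}^{\Lip(\g)}$ (the diagonal inverse $\mathcal{A}_\nu^{-1}(-\Delta)$, depending jointly on $(\ell,j)$, is not naturally an element of $\mathcal{OPM}^0_s$), which is the cleaner way to phrase your final step.
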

\begin{proof}
By \eqref{forma cal L infty viscosity} and \eqref{cal L infty e}, we write 
\begin{equation}\label{splitting cal L infty}
\begin{aligned}
& {\mathcal L}_\nu^{(\infty)} = {\mathtt L}_\nu^{(\infty)} + {\mathcal R}_{ \nu}^{(\infty)}\,, \qquad  {\mathtt L}_\nu^{(\infty)} := \mL_{e}^{(\infty)} -\nu\Delta =  \omega \cdot \partial_\vphi + {\mathcal D}_\infty - \nu \Delta\,. 
\end{aligned}
\end{equation}
By Lemma \ref{lemma coniugio finale}, one has that ${\mathtt L}_\nu^{(\infty)}$ is a diagonal operator with eigenvalues $\ii \,\omega \cdot \ell + \mu_\infty(j) + \nu |j|^2$,  for $\ell \in \Z^d, j \in \Z^2 \setminus \{ 0 \}$. By Lemma \ref{lemma blocchi finali}, $\mu_\infty(j) \in \ii \R$ is purely imaginary. Therefore, for any $\ell \in \Z^d$, $j \in \Z^2 \setminus \{ 0 \}$, 
we obtain a lower bound of the eigenvalues for any value of the parameters $\lambda = (\omega, \zeta)$ as follows: 
\begin{equation}\label{lower bound autov viscosity}
\begin{aligned}
|\ii \,\omega \cdot \ell + \mu_\infty(j) + \nu |j|^2| & \geq \big|{\rm Re}\big( \ii \,\omega \cdot \ell + \mu_\infty(j) + \nu |j|^2\big) \big| = \nu |j|^2\,.
\end{aligned}
\end{equation}
The latter bound implies that  the operator ${\mathtt L}_\nu^{(\infty)}$ in \eqref{splitting cal L infty} is invertible for any $\nu>0$,  with inverse given by 
$$
({\mathtt L}_\nu^{(\infty)})^{- 1} h(\vphi, x) = \sum_{\begin{subarray}{c}
\ell \in \Z^d \\
j \in \Z^2 \setminus \{ 0 \}
\end{subarray}} \dfrac{\widehat h(\ell, j)}{\ii \,\omega \cdot \ell + \mu_\infty(j) +\nu |j|^2} \,e^{\ii \ell \cdot \vphi } e^{\ii j \cdot x}\,, \quad h \in H^s_0\,.  
$$
Recalling that  $\langle j \rangle = {\rm max}\{ 1, |j| \}=|j|$ for $j \neq 0$, by the smoothing term from the small divisor \eqref{lower bound autov viscosity}, one has, for any $s_0\leq s \leq S-\Sigma(\beta)$,
$$
\begin{aligned}
\| ( {\mathtt L}_\nu^{(\infty)})^{- 1} (- \Delta) h \|_s^2 & = \sum_{\begin{subarray}{c}
\ell \in \Z^d \\
j \in \Z^2 \setminus \{ 0 \}
\end{subarray}} \langle \ell , j \rangle^{2 s} \frac{| j |^{4}}{|\ii \,\omega \cdot \ell + \mu_\infty(j) + \nu |j|^2|^2} |\widehat h(\ell, j)|^2  \\
& \leq \nu^{- 2}  \sum_{\begin{subarray}{c}
\ell \in \Z^d \\
j \in \Z^2 \setminus \{ 0 \}
\end{subarray}} \langle \ell, j \rangle^{2 s} |\widehat h(\ell, j)|^2 = \nu^{- 2} \| h \|_s^2 
\end{aligned}
$$
implying that 
\begin{equation}\label{stima L infty in}
\| ({\mathtt L}_\nu^{(\infty)})^{- 1} (- \Delta) \|_{{\mathcal B}(H^s_0)} \leq \nu^{- 1} \quad \text{and } \quad \| ({\mathtt L}_\nu^{(\infty)})^{- 1} \|_{{\mathcal B}(H^s_0)} \leq \nu^{- 1}\,. 
\end{equation}
We write the operator ${\mathcal L}_\nu^{(\infty)}$ in \eqref{splitting cal L infty} as
$
{\mathcal L}_\nu^{(\infty)} = {\mathtt L}_\nu^{(\infty)} \big( {\rm Id} + ({\mathtt L}_\nu^{(\infty)})^{- 1} {\mathcal R}_{ \nu}^{(\infty)} \big)
$.
By the estimates  \eqref{stima L infty in}, \eqref{stima R infty n} 
and Lemma \ref{proprieta standard norma decay}-$(i)$,$(ii)$, one gets, for any $s_0\leq s \leq S-\Sigma(\beta) - 4$,
$$
\begin{aligned}
\| ({\mathtt L}_\nu^{(\infty)})^{- 1} {\mathcal R}_{ \nu}^{(\infty)} \|_{{\mathcal B}(H^s_0)} & \leq \| ({\mathtt L}_\nu^{(\infty)})^{- 1} (- \Delta) \|_{{\mathcal B}(H^s_0)} \| (- \Delta)^{- 1} {\mathcal R}_{ \nu}^{(\infty)} \|_{{\mathcal B}(H^s_0)} \\
& \leq \nu^{- 1} |(- \Delta)^{- 1} {\mathcal R}_{ \nu}^{(\infty)} |_{0, s} \lesssim \nu^{- 1} |{\mathcal R}_{ \nu}^{(\infty)}|_{2, s + 2} \\
& \lesssim_s \nu^{- 1} \nu \e \gamma^{- 1} \lesssim_s \e \gamma^{- 1}\,.
\end{aligned}
$$
Hence, having $\e \gamma^{- 1} \leq \delta < 1$ small enough (independent of the viscosity $\nu > 0$), the operator ${\rm Id} + ({\mathtt L}_\nu^{(\infty)})^{- 1} {\mathcal R}_{ \nu}^{(\infty)}$ is invertible by Neumann series, uniformly with respect to the viscosity parameter $\nu > 0$ and $\big\| \big( {\rm Id} + {\mathtt L}_\infty^{- 1} {\mathcal R}_{\infty, \nu} \big)^{- 1} \big\|_{{\mathcal B}(H^s_0)} \lesssim_s 1$. Together with the estimate \eqref{stima L infty in}, one deduces that
$
({\mathcal L}_\nu^{(\infty)})^{- 1} = \big( {\rm Id} + ({\mathtt L}_\nu^{(\infty)})^{- 1} {\mathcal R}_{\nu}^{(\infty)} \big)^{- 1} ({\mathtt L}_\nu^{(\infty)})^{- 1}
$
, with
$$
\begin{aligned}
\|({\mathcal L}_\nu^{(\infty)})^{- 1}  (- \Delta) \|_{{\mathcal B}(H^s_0)} & 
\lesssim_s \nu^{- 1}\,.
\end{aligned}
$$
The claimed statement has then been proved. 
\end{proof}


We now deal with the inversion of the linearized operator $\mL_{\nu}$ in \eqref{operatore linearizzato}.
By Propositions \ref{proposizione trasporto}, \ref{proposizione regolarizzazione ordini bassi}, Lemma \ref{lemma coniugio finale} and recalling the definition of the set $\Lambda_\infty^\gamma$ in \eqref{cantor finale ridu}, one has that, for any $\lambda \in \Lambda_\infty^\gamma$,
\begin{equation}\label{coniugio finale cal L}
\begin{aligned}
& {\mathcal L}_\nu = {\mathcal W}_\infty {\mathcal L}_\nu^{(\infty)} {\mathcal W}_\infty^{- 1}, \quad {\mathcal L}_e = {\mathcal W}_\infty {\mathcal L}_{e}^{(\infty)} {\mathcal W}_\infty^{- 1} \,, \quad  {\mathcal W}_\infty := {\mathcal A}_\bot {\mathcal B} \Phi_\infty\,,
\end{aligned}
\end{equation} 
where the invertible maps $\mA_\perp$, $\mB$ and $\Phi_\infty$ are provided in Propositions \ref{lemma coniugio cal L (0)},  \ref{proposizione regolarizzazione ordini bassi} and \ref{lemma coniugio finale}, respectively.
Furthermore, by Lemma \ref{lemma inversione cal L infty}, the inverse of the operator ${\mathcal L}_\nu$ is given by ${\mathcal L}_\nu^{- 1} = {\mathcal W}_\infty ({\mathcal L}_\nu^{(\infty)})^{- 1} {\mathcal W}_\infty^{- 1}$. We need to estimate $\| {\mathcal L}_\nu^{- 1} (- \Delta) \|_{{\mathcal B}(H^s_0)}$. First, need some auxiliary lemmata. Let us denote
\begin{equation*}\label{laplaciano esteso}
	\langle D \rangle^{2} := {\rm Id} - \Delta\,, \quad \langle D \rangle^{-2} := ({\rm Id} - \Delta)^{-1} \,.
\end{equation*}
\begin{lemma}\label{stima strana cambio di variabile}
There exists $\Sigma_1(\beta) > \Sigma(\beta)$ (where $\Sigma(\beta)$ is given in \eqref{definizione alpha beta}) large enough such that, for any $S > s_0 + \Sigma_1(\beta)$, there exists $\delta := \delta (S, \tau, d) \in (0, 1)$ such that, if \eqref{ansatz} is fulfilled and $\e \gamma^{- 1} \leq \delta$, the invertible map ${\mathcal A}$ given in Proposition \ref{proposizione trasporto} satisfies $\langle D \rangle^{- 2} {\mathcal A}^{\pm 1} \langle D \rangle^2 \in {\mathcal B}(H^s)$ for any $s_0\leq s \leq S-\Sigma_1(\beta)$, with estimate $\| \langle D \rangle^{- 2} {\mathcal A}^{\pm 1} \langle D \rangle^2 \|_{{\mathcal B}(H^s)} \lesssim_s 1$.  
\end{lemma}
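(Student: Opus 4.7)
The plan is to exploit the commutator identity hidden in Lemma~\ref{coniugazione inverso laplaciano cambio variabile}-$(iii)$. The conjugation $\mathcal{A}^{-1}(-\Delta)\mathcal{A}=-\Delta+\mathcal{R}_\Delta$, after left-multiplication by $\mathcal{A}$, gives $[\mathcal{A},-\Delta]=-\mathcal{A}\mathcal{R}_\Delta$; since $\langle D\rangle^{2}=\mathrm{Id}-\Delta$, this yields the key decomposition
\begin{equation*}
\langle D\rangle^{-2}\mathcal{A}\langle D\rangle^{2}
=\mathcal{A}+\langle D\rangle^{-2}[\mathcal{A},\langle D\rangle^{2}]
=\mathcal{A}-\langle D\rangle^{-2}\mathcal{A}\mathcal{R}_\Delta.
\end{equation*}
The summand $\mathcal{A}$ is bounded on $H^{s}$ with norm $\lesssim_{s}1$ by \eqref{stima tame cambio variabile rid trasporto}, so the whole proof reduces to showing that $\langle D\rangle^{-2}\mathcal{A}\mathcal{R}_\Delta$ is bounded on $H^{s}$.

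For the second summand I would pull $\mathcal{A}$ to the outside, writing
\begin{equation*}
\langle D\rangle^{-2}\mathcal{A}\mathcal{R}_\Delta
=\mathcal{A}\,\bigl(\mathcal{A}^{-1}\langle D\rangle^{-2}\mathcal{A}\bigr)\,\mathcal{R}_\Delta,
\end{equation*}
and treat the middle operator by a decay–norm Neumann argument. Inverting the identity $\mathcal{A}^{-1}\langle D\rangle^{2}\mathcal{A}=\langle D\rangle^{2}+\mathcal{R}_\Delta$ one obtains
$\mathcal{A}^{-1}\langle D\rangle^{-2}\mathcal{A}=\langle D\rangle^{-2}\bigl(\mathrm{Id}+\langle D\rangle^{-2}\mathcal{R}_\Delta\bigr)^{-1}$. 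By \eqref{decay laplace}, \eqref{stima R Delta} and Lemma~\ref{proprieta standard norma decay}-$(ii)$, $\langle D\rangle^{-2}\mathcal{R}_\Delta\in\mathcal{OPM}^{0}_{s}$ with $|\langle D\rangle^{-2}\mathcal{R}_\Delta|_{0,s}^{\mathrm{Lip}(\gamma)}\lesssim_{s}\varepsilon\gamma^{-1}$, which is small by \eqref{condizione piccolezza rid trasporto}; Lemma~\ref{proprieta standard norma decay}-$(iv)$ then produces the Neumann inverse with decay-norm bound $\lesssim_{s}1$, so $\mathcal{A}^{-1}\langle D\rangle^{-2}\mathcal{A}\in\mathcal{OPM}^{-2}_{s}$ with norm $\lesssim_{s}1$. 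One more application of Lemma~\ref{proprieta standard norma decay}-$(ii)$ shows that $(\mathcal{A}^{-1}\langle D\rangle^{-2}\mathcal{A})\,\mathcal{R}_\Delta\in\mathcal{OPM}^{0}_{s}$ with norm $\lesssim_{s}\varepsilon\gamma^{-1}$, and Lemma~\ref{proprieta standard norma decay}-$(i)$ turns this into $H^{s}$-boundedness. Composing on the left with $\mathcal{A}$, again via \eqref{stima tame cambio variabile rid trasporto}, one concludes $\|\langle D\rangle^{-2}\mathcal{A}\mathcal{R}_\Delta\|_{\mathcal{B}(H^{s})}\lesssim_{s}\varepsilon\gamma^{-1}$, and combining with the estimate on $\mathcal{A}$ gives the claimed bound $\|\langle D\rangle^{-2}\mathcal{A}\langle D\rangle^{2}\|_{\mathcal{B}(H^{s})}\lesssim_{s}1$.

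The case of $\mathcal{A}^{-1}$ is entirely symmetric: one begins from the twin identity obtained by swapping the roles of $\alpha$ and $\breve\alpha$, which produces an analogous remainder $\breve{\mathcal{R}}_\Delta\in\mathcal{OPM}^{2}_{s}$ satisfying the same bound as in \eqref{stima R Delta}, and then the identical Neumann-series scheme yields $\|\langle D\rangle^{-2}\mathcal{A}^{-1}\langle D\rangle^{2}\|_{\mathcal{B}(H^{s})}\lesssim_{s}1$. The main bookkeeping obstacle is to track how many derivatives of $\alpha$ and $\breve\alpha$ are consumed along the chain Proposition~\ref{proposizione trasporto} $\to$ Lemma~\ref{coniugazione inverso laplaciano cambio variabile} $\to$ the composition law in Lemma~\ref{proprieta standard norma decay}-$(ii)$ and the inversion in Lemma~\ref{proprieta standard norma decay}-$(iv)$; this forces the choice $\Sigma_{1}(\beta):=\Sigma(\beta)+c$ for a suitable finite constant $c=c(\tau,d)$ that guarantees, under \eqref{ansatz} and $\varepsilon\gamma^{-1}\le\delta$, uniform estimates throughout the range $s_{0}\le s\le S-\Sigma_{1}(\beta)$.
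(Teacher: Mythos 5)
Your argument is correct in substance, but it takes a genuinely different route from the paper's. The paper realizes $\mathcal{A}$ as the time-one flow of the transport equation $\partial_\tau \mathcal{A}(\tau)=b(\tau,\vphi,x)\cdot\nabla\mathcal{A}(\tau)$ and runs an Egorov-type argument: the conjugated flow $\Phi(\tau)=\langle D\rangle^{-2}\mathcal{A}(\tau)\langle D\rangle^{2}$ satisfies the same transport equation perturbed by the commutator $\mathcal{R}(\tau)=[\langle D\rangle^{-2},b\cdot\nabla]\langle D\rangle^{2}$, whose $\mathcal{B}(H^{s})$-norm is shown to be $O(\e\gamma^{-1})$ by an explicit computation on matrix elements, and variation of constants plus \eqref{stima A tau inv} then give the uniform bound on $\Phi(1)$. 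You instead use the purely algebraic decomposition $\langle D\rangle^{-2}\mathcal{A}\langle D\rangle^{2}=\mathcal{A}+\langle D\rangle^{-2}[\mathcal{A},-\Delta]$ and reduce everything to the conjugation identities already proved in Lemma~\ref{coniugazione inverso laplaciano cambio variabile}; this avoids the flow entirely, is shorter, and reuses the decay-norm machinery (Lemma~\ref{proprieta standard norma decay}) rather than redoing a commutator estimate by hand. What the paper's flow argument buys is independence from the specific structure of $\mathcal{R}_\Delta$ (it only needs $b$ in $H^{s+3}$ and generalizes immediately to $\langle D\rangle^{-m}\mathcal{A}\langle D\rangle^{m}$); what your argument buys is economy.

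Two slips to repair, neither fatal. First, Lemma~\ref{coniugazione inverso laplaciano cambio variabile}-$(iii)$ is stated for $\mathcal{A}_\bot=\Pi_0^\bot\mathcal{A}\Pi_0^\bot$ acting on $H^s_0$, whereas the lemma you are proving concerns $\mathcal{A}$ on all of $H^{s}$; you should invoke the unprojected chain-rule identity $\mathcal{A}^{-1}(-\Delta)\mathcal{A}=-\Delta+\widetilde{\mathcal{R}}_\Delta$ with $\widetilde{\mathcal{R}}_\Delta:=\sum_{i,j}\mathcal{A}^{-1}[a_{ij}]\partial_{y_iy_j}+\sum_k\mathcal{A}^{-1}[b_k]\partial_{y_k}$, i.e.\ the operator appearing in \eqref{RDelta} before the projectors are inserted, which holds exactly on $H^{s+2}(\T^{d+2})$ and obeys the same bound as \eqref{stima R Delta}. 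Second, the Neumann inverse should read $\bigl(\mathrm{Id}+\langle D\rangle^{-2}\widetilde{\mathcal{R}}_\Delta\bigr)^{-1}\langle D\rangle^{-2}$ (or equivalently $\langle D\rangle^{-2}\bigl(\mathrm{Id}+\widetilde{\mathcal{R}}_\Delta\langle D\rangle^{-2}\bigr)^{-1}$), not the product in the order you wrote; the factor order does not affect the estimates, but the identity as written is false.
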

\begin{proof}
We prove the claim for $\langle D \rangle^{- 2} {\mathcal A} \langle D \rangle^2$. The proof for $\langle D \rangle^{- 2} {\mathcal A}^{-1} \langle D \rangle^2$ is analogous and we omit it. For any $\tau \in [- 1, 1]$, we define the operator ${\mathcal A}(\tau)$ by
$$
{\mathcal A}(\tau) : u_0(\vphi, x) \mapsto u_0(\vphi, x + \tau \alpha(\vphi, x))\,.
$$
By Proposition \eqref{proposizione trasporto}, for any $\tau \in [- 1, 1]$, the map ${\mathcal A}(\tau)$ is invertible on $H^s(\T^{d + 2})$ and 
\begin{equation}\label{stima A tau inv}
\| {\mathcal A}(\tau)^{\pm 1} \|_{{\mathcal B}(H^s)} \lesssim_s 1\,, \quad \forall \tau \in [- 1 \,,\, 1]\,. 
\end{equation}
For some $\Sigma_1(\beta) \gg 0$ sufficiently large and for any $S > s_0 + \Sigma_1(\beta)$, we can apply the estimate \eqref{stima alpha trasporto} with $s + 4$ instead of $s$, obtaining, for any $s_0\leq s \leq S-\Sigma_1(\beta)$,  
$$
\| \alpha \|_{s + 4}, \| \breve \alpha \|_{s + 4} \lesssim_s \e \gamma^{- 1}\,. 
$$
A direct calculation shows that ${\mathcal A}(\tau)$ solves 
$$
\begin{cases}
\partial_\tau {\mathcal A}(\tau) = b(\tau, \vphi, x) \cdot \nabla {\mathcal A}(\tau)\,, \\
{\mathcal A}(0) = {\rm Id}\,,
\end{cases}
 \text{where} \quad b(\tau, \vphi, x) := ({\rm Id} + \tau D_{x} \alpha(\vphi, x))^{- 1} \alpha(\vphi, x)\,.
$$
Note that, since $\| \alpha \|_{s + 1} \lesssim_s \e \gamma^{- 1} \leq \delta < 1$, the $2 \times 2$ matrix ${\rm Id} + D_{x} \alpha(\vphi, x)$ is invertible by Neumann series and by using the tame estimate in \eqref{p1-pr}, one has that  $\| ({\rm Id} + \tau D_{x} \alpha(\vphi, x))^{- 1}  \|_s \lesssim_s  1 $. Hence 
\begin{equation}\label{stima b}
\| b(\tau, \cdot) \|_s \lesssim_s \| \alpha \|_{s + 1} \lesssim_s \e \gamma^{- 1}\,, \quad \text{uniformly in} \quad \tau \in [- 1, 1]\,. 
\end{equation}
Define 
$$
\Phi(\tau) := \langle D \rangle^{- 2} {\mathcal A}(\tau) \langle D \rangle^2, \quad \tau \in [- 1, 1]\,.
$$
Clearly $\Phi(0, \vphi) = {\rm Id}$ and, by a direct computation,
\begin{equation*}
	\begin{aligned}
		&\partial_\tau \Phi(\tau) = b (\tau, \vphi, x) \cdot \nabla \Phi(\tau) + {\mathcal R}(\tau)  \Phi(\tau) \,,\\ 
		& {\mathcal R}(\tau) :=  [\langle D \rangle^{- 2},b (\tau, \vphi, x) \cdot \nabla] \langle D \rangle^{2} \,.
	\end{aligned}
\end{equation*}
By variation of the constants, we write $\Phi(\tau) = {\mathcal A}(\tau) {\mathcal M}(\tau)$, where ${\mathcal M}(\tau)$ solves 
\begin{equation}\label{disastro totale 10}
\begin{cases}
\partial_\tau {\mathcal M}(\tau) = {\mathcal Q}(\tau) {\mathcal M}(\tau) \,,\\
{\mathcal M}(0) = {\rm Id}\,, 
\end{cases} \quad \text{with} \quad {\mathcal Q}(\tau) := {\mathcal A}(\tau)^{- 1} {\mathcal R}(\tau) {\mathcal A}(\tau)\,. 
\end{equation}
First, we estimate $\mR(\tau)$. We claim that 
\begin{equation}\label{stima cal R tau lemma A}
\sup_{\tau \in [- 1, 1]} \|{\mathcal R}(\tau)\|_{{\mathcal B}(H^s)} \lesssim_s \e \gamma^{- 1}\,. 
\end{equation}
By a direct calculation, the matrix representation of the operator ${\mathcal R}(\tau)$ is given by 
\begin{equation}\label{disastro assoluto zero}
\widehat{\mathcal R}(\tau, \ell)_j^{j'} = \Big(\dfrac{\langle j' \rangle^2 - \langle j \rangle^2}{\langle j \rangle^2 \langle j' \rangle^2}  \Big) \langle j' \rangle^2\, \ii \,j' \cdot \widehat b(\tau, \ell, j - j') , \quad (\ell, j, j') \in \Z^d \times \Z^2 \times \Z^2\,. 
\end{equation}
Note that $\widehat{\mathcal R}(\tau, \ell)_j^{j'} = 0$ when $j'=0$.
Therefore, we can always consider $j' \neq 0$, implying that $\langle j' \rangle = |j'|$. One then has 
$$
\begin{aligned}
\langle j' \rangle^2 - \langle j \rangle^2 & = |j'|^2 - \langle j \rangle^2 = |j + j' - j|^2 - \langle j \rangle^2  \\
&= |j|^2 + 2 j \cdot (j - j') + |j - j'|^2 - \langle j \rangle^2\,, \\
\end{aligned}
$$
which implies that $ |\langle j' \rangle^2 - \langle j \rangle^2 | \lesssim \langle j - j' \rangle^2 \langle j \rangle$.
This implies that the coefficients in \eqref{disastro assoluto zero} are estimated as
$$
\begin{aligned}
|\widehat{\mathcal R}(\tau, \ell)_j^{j'} | & \lesssim 
\dfrac{ \langle j - j' \rangle^2  \langle j' \rangle}{\langle j \rangle }|\widehat b(\tau, \ell, j - j')| \\
& \lesssim \dfrac{ \langle j - j' \rangle^2  (\langle j\rangle + \langle j - j'\rangle )}{\langle j \rangle }|\widehat b(\tau, \ell, j - j')|  \lesssim \langle j - j' \rangle^3 |\widehat b(\tau, \ell, j - j')| 
\end{aligned}
$$
Then, by \eqref{def decay norm}, \eqref{prop multiplication decay} and \eqref{stima b}, one gets
$$
|{\mathcal R}(\tau)|_{0, s} \lesssim \| b(\tau, \cdot) \|_{s + 3} \lesssim_s \| \alpha \|_{s + 4} \lesssim_s \e \gamma^{- 1}\,. 
$$
We conclude the claimed bound \eqref{stima cal R tau lemma A} by Lemma \ref{proprieta standard norma decay}-$(i)$. 

\noindent
We now estimate $\Phi(\tau)$. By the estimates \eqref{stima A tau inv}, \eqref{stima cal R tau lemma A}, one gets that ${\mathcal Q}(\tau)$ satisfies 
$$
\sup_{\tau \in [- 1, 1]} \| {\mathcal Q}(\tau) \|_{{\mathcal B}(H^s)} \lesssim_s \e \gamma^{- 1}\,.
$$ 
Hence, by \eqref{disastro totale 10}, ${\mathcal M}(\tau)$ is the propagator associated to a linear vector field which is bounded on $H^s(\T^{d + 2})$ uniformly with respect to $\tau$. By standard arguments, we get
$$
\sup_{\tau \in [- 1, 1]} \| {\mathcal M}(\tau) - {\rm Id} \|_{{\mathcal B}(H^s)} \lesssim_s \e \gamma^{-1}\,.
$$
Since $\Phi(\tau) = {\mathcal A}(\tau) {\mathcal M}(\tau)$, the latter estimate together with \eqref{stima A tau inv} and $\e \gamma^{- 1} \leq \delta 1$ implies the claimed bound for $\Phi(1)\equiv \langle D \rangle^{-2} \mA \langle D \rangle^{2}$.
\end{proof}


\begin{lemma}\label{tame trasformazioni finali}
There exists $\Sigma_2(\beta) > \Sigma_1(\beta)$ large enough (where $\Sigma_1(\beta)$ is given in Lemma \ref{stima strana cambio di variabile}) such that, for any $S > s_0 + \Sigma_2(\beta)$, there exists $\delta := \delta (S, \tau, d) \in (0, 1)$ such that, if \eqref{ansatz} is fulfilled and $\e \gamma^{- 1} \leq \delta$, the maps ${\mathcal W}_\infty^{\pm 1} : H^s_0 \to H^s_0$ in \eqref{coniugio finale cal L} are bounded for any $s_0\leq s\leq S-\Sigma_2(\beta)$ with estimates $\| {\mathcal W}_\infty^{\pm 1} \|_{{\mathcal B}(H^s_0)} \lesssim_s 1$. Moreover, we have  $\| (- \Delta)^{- 1}{\mathcal W}_\infty^{\pm1} (- \Delta) \|_{{\mathcal B}(H^s_0)} \lesssim_s 1$. 
\end{lemma}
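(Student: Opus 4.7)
The plan is to handle the two claims using the factorization $\mathcal{W}_\infty = \mathcal{A}_\bot \mathcal{B} \Phi_\infty$ (and analogously $\mathcal{W}_\infty^{-1} = \Phi_\infty^{-1} \mathcal{B}^{-1} \mathcal{A}_\bot^{-1}$), controlling each factor via the estimates already established in Sections \ref{sezione riduzione ordine alto}, \ref{sez riduzione ordini bassi} and \ref{sez convergenza KAM}. The first claim is essentially immediate: $\mathcal{A}_\bot^{\pm 1} : H^s_0 \to H^s_0$ is bounded by Lemma \ref{coniugazione inverso laplaciano cambio variabile}-$(i)$ together with the tame estimate \eqref{stima tame cambio variabile rid trasporto}, while $\mathcal{B}^{\pm 1}$ and $\Phi_\infty^{\pm 1}$ differ from the identity by an operator of small $|\cdot|^{{\rm Lip}(\gamma)}_{0, s}$ norm (see \eqref{stima Phi M reg ordini bassi} and \eqref{stima Phi infty}), and Lemma \ref{proprieta standard norma decay}-$(i)$ converts this matrix decay norm into an operator norm bound on $H^s_0$. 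Composing the three factors and choosing $\Sigma_2(\beta)$ large enough to absorb all the required Sobolev losses yields $\| \mathcal{W}_\infty^{\pm 1} \|_{\mathcal{B}(H^s_0)} \lesssim_s 1$.

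For the second claim, I would factor
\begin{equation*}
(-\Delta)^{-1} \mathcal{W}_\infty (-\Delta) = \bigl[(-\Delta)^{-1} \mathcal{A}_\bot (-\Delta)\bigr] \bigl[(-\Delta)^{-1} \mathcal{B} (-\Delta)\bigr] \bigl[(-\Delta)^{-1} \Phi_\infty (-\Delta)\bigr]
\end{equation*}
and bound each conjugation on $H^s_0$ separately. For $\mathcal{B}$ and $\Phi_\infty$, writing them as $\mathrm{Id} + \mathcal{K}$ with $|\mathcal{K}|_{0, s+2}^{{\rm Lip}(\gamma)} \lesssim_s \e \gamma^{-1}$, a direct matrix-element computation gives $\widehat{(-\Delta)^{-1} \mathcal{K} (-\Delta)}(\ell)_j^{j'} = (|j'|^2/|j|^2)\, \widehat{\mathcal{K}}(\ell)_j^{j'}$ for $j, j' \in \Z^2 \setminus \{ 0 \}$, and the elementary bound $|j'|/|j| \leq 1 + |j - j'|$ (valid since $|j| \geq 1$) yields $|j'|^2/|j|^2 \lesssim \langle j - j' \rangle^2$. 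By Definition \ref{block norm}, this produces $|(-\Delta)^{-1} \mathcal{K} (-\Delta)|_{0, s} \lesssim |\mathcal{K}|_{0, s+2} \lesssim_s \e \gamma^{-1}$, and Lemma \ref{proprieta standard norma decay}-$(i)$ concludes the boundedness of $(-\Delta)^{-1} \mathcal{B} (-\Delta)$ and $(-\Delta)^{-1} \Phi_\infty (-\Delta)$ on $H^s_0$.

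The main technical obstacle is the remaining conjugation $(-\Delta)^{-1} \mathcal{A}_\bot (-\Delta)$, since $\mathcal{A} - \mathrm{Id}$ is a composition operator with no good decay in ${\mathcal O}{\mathcal P}{\mathcal M}^0$ and the matrix-element trick just described breaks down; this is precisely the reason Lemma \ref{stima strana cambio di variabile} was established. I would pass from $(-\Delta)^{\pm 1}$ to $\langle D \rangle^{\pm 2}$ using that, on zero-average functions, $\langle D \rangle^2 - (-\Delta) = \mathrm{Id}$ and $\langle D \rangle^{-2} - (-\Delta)^{-1}$ is a Fourier multiplier of order $-2$. Expanding the conjugation then produces the principal term $\langle D \rangle^{-2} \mathcal{A}_\bot \langle D \rangle^2 = \Pi_0^\bot (\langle D \rangle^{-2} \mathcal{A} \langle D \rangle^2) \Pi_0^\bot$ (since $\Pi_0^\bot$ commutes with every Fourier multiplier), which is bounded on $H^s_0$ by Lemma \ref{stima strana cambio di variabile}, together with remainder terms built out of a bounded Fourier multiplier, $\mathcal{A}_\bot$ itself and $\langle D \rangle^{\pm 2}$, all bounded on $H^s_0$ by virtue of the first claim and standard Fourier multiplier mapping properties. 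The argument for $\mathcal{W}_\infty^{-1}$ is completely analogous, invoking Lemma \ref{stima strana cambio di variabile} with $\mathcal{A}^{-1}$ in place of $\mathcal{A}$.
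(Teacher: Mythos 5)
Your proof follows the same route as the paper: factor $\mathcal{W}_\infty = \mathcal{A}_\bot\mathcal{B}\Phi_\infty$, handle the composition-operator factor $\mathcal{A}_\bot$ by conjugating with $\langle D\rangle^{\pm 2}$ via Lemma \ref{stima strana cambio di variabile}, and handle $\mathcal{B}$ and $\Phi_\infty$ through matrix-decay-norm estimates (your explicit matrix-element computation being equivalent to the paper's appeal to the composition Lemma \ref{proprieta standard norma decay}-$(ii)$ with $(-\Delta)^{\pm 1}\in\mathcal{OPM}^{\pm 2}$). The only presentational wrinkle is that $\langle D\rangle^{2}$ is not itself bounded on $H^s_0$, so the remainder terms you describe need a small regrouping to expose the bounded multipliers $\langle D\rangle^2(-\Delta)^{-1}$ and $(-\Delta)\langle D\rangle^{-2}$; the paper avoids the expansion of $\langle D\rangle^2 = -\Delta + {\rm Id}$ altogether and simply inserts $\langle D\rangle^{2}\langle D\rangle^{-2}$ around $\mathcal{A}$ directly.
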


\begin{proof}
We prove the claimed bound for $(- \Delta)^{- 1}{\mathcal W}_\infty (- \Delta)$. The other bounds follow similarly and we omit their proof. By \eqref{coniugio finale cal L}, one has
$$
\begin{aligned}
(- \Delta)^{- 1}{\mathcal W}_\infty (- \Delta) & = (- \Delta)^{- 1}{\mathcal A}_\bot (- \Delta) (- \Delta)^{- 1}{\mathcal B} \Phi_\infty (- \Delta) \\
& = \Pi_0^\bot (\langle D \rangle^{- 2}{\mathcal A} \langle D \rangle^2) \Pi_0^\bot (- \Delta)^{- 1}{\mathcal B} \Phi_\infty (- \Delta) \\
& =  \langle D \rangle^2 (- \Delta)^{- 1} \Pi_0^\bot \langle D \rangle^{- 2} {\cal A} \langle D \rangle^2 \Pi_0^\bot (- \Delta) \langle D \rangle^{- 2} (- \Delta)^{- 1}{\mathcal B} \Phi_\infty (- \Delta)
\end{aligned}
$$ 
Hence, the claimed bound follows by Lemma \ref{stima strana cambio di variabile}, Lemma \ref{proprieta standard norma decay}-$(i)$, $(ii)$, the estimates \eqref{stima alpha trasporto}  \eqref{stima Phi M reg ordini bassi}, \eqref{stima Phi infty} (for $\e \gamma^{- 1} \leq \delta < 1$) and by  the trivial fact that 
$\|\langle D \rangle^2 (- \Delta)^{- 1} \|_{{\mathcal B}(H^s)}$, $ \|(- \Delta) \langle D \rangle^{- 2} \|_{{\mathcal B}(H^s)} \lesssim 1$ for any $s \geq 0$. 
\end{proof}

We are now ready to prove the invertibility of the operator $\mL_{\nu}$.
\begin{proposition}\label{inversione linearized}
	{\bf (Inversion of the operator $\mL_{\nu}$).}
There exists $\Sigma_3(\beta) > \Sigma_2(\beta)$ (where $\Sigma_2(\beta)$ is given in Lemma \ref{tame trasformazioni finali}) such that, for any $S > s_0 + \Sigma_3(\beta)$, there exists $\delta := \delta (S, \tau, d) \in (0, 1)$ such that, if \eqref{ansatz} holds and $\e \gamma^{- 1} \leq \delta$, for any value of the viscosity parameter $\nu > 0$, for any $\lambda = (\omega, \zeta) \in \Lambda_\infty^\gamma $ and for any $s_0\leq s\leq S-\Sigma_3(\beta)$, the operator ${\mathcal L}_\nu$ is invertible with a bounded inverse ${\mathcal L}_\nu^{- 1} \in {\mathcal B}(H^s_0)$, satisfying the estimates
$$\|{\mathcal L}_\nu^{- 1} \|_{{\mathcal B}(H^s_0)}\,,\,  \|{\mathcal L}_\nu^{- 1} (- \Delta) \|_{{\mathcal B}(H^s_0)} \lesssim_s \nu^{- 1}\,.$$
\end{proposition}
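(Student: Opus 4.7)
The plan is to deduce the invertibility of $\mathcal{L}_\nu$ from the invertibility of the fully reduced operator $\mathcal{L}_\nu^{(\infty)}$ via the conjugation identity \eqref{coniugio finale cal L}, and then to transfer the smoothing-of-order-two estimate through the conjugating map $\mathcal{W}_\infty = \mathcal{A}_\bot \mathcal{B} \Phi_\infty$, controlling carefully the loss of derivatives that the change of variables introduces.

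More precisely, the starting point is that for $\lambda \in \Lambda_\infty^\gamma$ one has by \eqref{coniugio finale cal L} the identity $\mathcal{L}_\nu = \mathcal{W}_\infty \mathcal{L}_\nu^{(\infty)} \mathcal{W}_\infty^{-1}$, so that formally
$$
\mathcal{L}_\nu^{-1} = \mathcal{W}_\infty \, (\mathcal{L}_\nu^{(\infty)})^{-1} \, \mathcal{W}_\infty^{-1}\,.
$$
By Lemma \ref{lemma inversione cal L infty}, if $S$ is large enough with respect to $\Sigma(\beta)$ and $\e \gamma^{-1} \leq \delta$ is sufficiently small (independently of $\nu$), then $(\mathcal{L}_\nu^{(\infty)})^{-1}$ is a bounded operator on $H^s_0$ and satisfies $\|(\mathcal{L}_\nu^{(\infty)})^{-1}(-\Delta)\|_{\mathcal{B}(H^s_0)} \lesssim_s \nu^{-1}$; a glance at the proof of that lemma (in particular the first bound in \eqref{stima L infty in} combined with the Neumann series argument) also yields $\|(\mathcal{L}_\nu^{(\infty)})^{-1}\|_{\mathcal{B}(H^s_0)} \lesssim_s \nu^{-1}$. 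Next, Lemma \ref{tame trasformazioni finali} (invoked with $s$ in a slightly larger range of regularity, paying the price of a slightly larger loss $\Sigma_3(\beta) > \Sigma_2(\beta)$) provides $\|\mathcal{W}_\infty^{\pm 1}\|_{\mathcal{B}(H^s_0)} \lesssim_s 1$. Composing these three estimates immediately gives the first bound $\|\mathcal{L}_\nu^{-1}\|_{\mathcal{B}(H^s_0)} \lesssim_s \nu^{-1}$.

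For the smoothing estimate $\|\mathcal{L}_\nu^{-1}(-\Delta)\|_{\mathcal{B}(H^s_0)} \lesssim_s \nu^{-1}$ the key trick is to insert $(-\Delta)(-\Delta)^{-1}$ on the right of $\mathcal{W}_\infty^{-1}$, writing
$$
\mathcal{L}_\nu^{-1}(-\Delta) = \mathcal{W}_\infty \, (\mathcal{L}_\nu^{(\infty)})^{-1} (-\Delta) \cdot \big[(-\Delta)^{-1}\mathcal{W}_\infty^{-1}(-\Delta)\big]\,.
$$
The first factor is bounded on $H^s_0$ with norm $\lesssim_s 1$ by Lemma \ref{tame trasformazioni finali}, the middle factor has norm $\lesssim_s \nu^{-1}$ by Lemma \ref{lemma inversione cal L infty}, and the last factor is the very object whose boundedness $\lesssim_s 1$ is the content of the second part of Lemma \ref{tame trasformazioni finali}. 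The product yields the desired bound.

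The potentially delicate point, which however is already fully accounted for in the tame estimates of Lemma \ref{tame trasformazioni finali} (building on Lemma \ref{stima strana cambio di variabile}), is exactly the conjugation of the unbounded operator $-\Delta$ by the diffeomorphism $\mathcal{A}$: this is what forces the regularity loss $\Sigma_3(\beta) > \Sigma_2(\beta)$ and the extra derivatives absorbed into the ansatz \eqref{ansatz}. Once this is taken care of, the argument closes without any further smallness condition linking $\e$ and $\nu$, thus yielding an inverse whose norm is uniform in $\nu > 0$ under the smallness condition $\e \gamma^{-1} \leq \delta$ alone.
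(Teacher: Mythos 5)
Your proposal is correct and follows essentially the same route as the paper: conjugation via ${\mathcal W}_\infty$, the insertion of $(-\Delta)(-\Delta)^{-1}$ to split ${\mathcal L}_\nu^{-1}(-\Delta)$ into the three factors controlled by Lemmata \ref{lemma inversione cal L infty} and \ref{tame trasformazioni finali}. The only cosmetic difference is that you obtain $\|{\mathcal L}_\nu^{-1}\|_{{\mathcal B}(H^s_0)} \lesssim_s \nu^{-1}$ directly from $\|({\mathcal L}_\nu^{(\infty)})^{-1}\|_{{\mathcal B}(H^s_0)} \lesssim_s \nu^{-1}$, whereas the paper deduces it from the smoothing bound; both are valid.
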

\begin{proof}
We write ${\mathcal L}_\nu^{- 1} = {\mathcal W}_\infty ({\mathcal L}_\nu^{(\infty)})^{- 1} {\mathcal W}_\infty^{- 1} $ and, consequently,
$$
\begin{aligned}
{\mathcal L}_\nu^{- 1} (- \Delta)& = {\mathcal W}_\infty ({\mathcal L}_\nu^{(\infty)})^{- 1} {\mathcal W}_\infty^{- 1} (- \Delta) \\
&  = {\mathcal W}_\infty \big( ({\mathcal L}_\nu^{(\infty)})^{- 1} (- \Delta) \big) \big( (- \Delta)^{- 1} {\mathcal W}_\infty^{- 1} (- \Delta) \big)\,.
\end{aligned}
$$
Therefore, by Lemmata \ref{lemma inversione cal L infty}, \ref{tame trasformazioni finali}
$$
\begin{aligned}
\|  {\mathcal L}_\nu^{- 1} (- \Delta)\|_{{\mathcal B}(H^s_0)} & \leq \|  {\mathcal W}_\infty\|_{{\mathcal B}(H^s_0)}\| ({\mathcal L}_\nu^{(\infty)})^{- 1} (- \Delta) \|_{{\mathcal B}(H^s_0)}\| (- \Delta)^{- 1} {\mathcal W}_\infty^{- 1} (- \Delta)\|_{{\mathcal B}(H^s_0)}  \\
& \lesssim_s \nu^{- 1}
\end{aligned}
$$
The bound  $\| {\mathcal L}_\nu^{- 1} \|_{{\mathcal B}(H^s_0)} \lesssim_s \nu^{- 1}$ follows from the latter.
\end{proof}
In order to compute a good approximate solution for the nonlinear equation, we also need to invert the linearized Euler operator ${\mathcal L}_e$ at the Euler solution $v_e$, see \eqref{operatore linearizzato}. 
To this purpose, we then define the set of the \emph{first Melnikov non-resonance  conditions}
\begin{equation}\label{prime di melnikov}
\begin{aligned}
\Gamma_\infty^\gamma & := \Big\{ \lambda = (\omega, \zeta) \in DC(\gamma, \tau) \,: \, |\ii \, \omega \cdot \ell + \mu_\infty(j)| \geq \frac{\gamma}{\langle \ell \rangle^\tau |j|^\tau} \\
&\quad  \quad \forall\, (\ell, j) \in \Z^d \times (\Z^2 \setminus \{ 0 \}) \Big\}\,. 
\end{aligned}
\end{equation}
\begin{proposition}\label{inversione linearized no viscosity}
	{\bf (Inversion of the operator $\mL_{\e}$).}
There exists $\Sigma_4(\beta) > \Sigma_3(\beta)$ large enough (where $\Sigma_3(\beta)$ is given in Proposition \ref{inversione linearized}) such that, for any $S > s_0 + \Sigma_4(\beta)$, there exists $\delta := \delta(S,  \tau, d) \in (0, 1)$ small enough such that, if \eqref{ansatz} holds and $\e \gamma^{- 1} \leq \delta$, for any $\lambda = (\omega, \zeta) \in \Lambda_\infty^\gamma \cap \Gamma_\infty^\gamma$, the operator ${\mathcal L}_e : H^{s + 1}_0 \to H^s_0$ is invertible (with loss of derivatives) with the inverse ${\mathcal L}_e^{- 1} \in {\mathcal B}(H^{s + \tau}_0, H^s_0)$ for any $s_0\leq s \leq S-\Sigma_4(\beta)$, satisfying $$\| {\mathcal L}_e^{- 1} \|_{{\mathcal B}(H^{s + \tau}_0, H^s_0)} \lesssim_s \gamma^{- 1}\,.$$ 
\end{proposition}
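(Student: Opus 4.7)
\bigskip

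\noindent
The plan is to mirror the argument of Proposition \ref{inversione linearized}, but replacing the viscous smoothing (which was crucial to absorb the $O(\nu^{-1})$ blow-up) by a straight loss of derivatives paid with the first Melnikov divisors on $\Gamma_\infty^\gamma$. The starting point is the conjugation \eqref{coniugio finale cal L}, which gives, for every $\lambda\in\Lambda_\infty^\gamma$,
\begin{equation*}
\mathcal{L}_e = \mathcal{W}_\infty\,\mathcal{L}_e^{(\infty)}\,\mathcal{W}_\infty^{-1},\qquad \mathcal{L}_e^{(\infty)}=\omega\cdot\partial_\vphi+\mathcal{D}_\infty={\rm diag}_{(\ell,j)}\bigl(\ii\,\omega\cdot\ell+\mu_\infty(j)\bigr).
\end{equation*}

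\noindent
First, I would invert the diagonal operator $\mathcal{L}_e^{(\infty)}$ on $\Gamma_\infty^\gamma$: by \eqref{prime di melnikov} the eigenvalues satisfy $|\ii\,\omega\cdot\ell+\mu_\infty(j)|\geq \gamma\langle\ell\rangle^{-\tau}|j|^{-\tau}$ for all $(\ell,j)\in\Z^d\times(\Z^2\setminus\{0\})$, so the operator defined by
\begin{equation*}
(\mathcal{L}_e^{(\infty)})^{-1}h(\vphi,x):=\sum_{\ell\in\Z^d,\,j\in\Z^2\setminus\{0\}}\frac{\widehat h(\ell,j)}{\ii\,\omega\cdot\ell+\mu_\infty(j)}\,e^{\ii(\ell\cdot\vphi+j\cdot x)}
\end{equation*}
is a right/left inverse. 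Using $\langle\ell\rangle^{\tau}|j|^{\tau}\lesssim \langle\ell,j\rangle^{\tau}$ (after absorbing a harmless numerical factor into the generic constant $\tau$), a direct Plancherel estimate gives
\begin{equation*}
\|(\mathcal{L}_e^{(\infty)})^{-1}h\|_s^2=\!\!\sum_{\ell,j}\frac{\langle\ell,j\rangle^{2s}}{|\ii\,\omega\cdot\ell+\mu_\infty(j)|^2}|\widehat h(\ell,j)|^2\leq \gamma^{-2}\|h\|_{s+\tau}^2,
\end{equation*}
so $\|(\mathcal{L}_e^{(\infty)})^{-1}\|_{\mathcal{B}(H^{s+\tau}_0,H^s_0)}\lesssim \gamma^{-1}$ for all $s\geq s_0$ below the regularity cap imposed by the KAM reduction.

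\noindent
Next, I would transfer the estimate back to $\mathcal{L}_e$ via $\mathcal{L}_e^{-1}=\mathcal{W}_\infty(\mathcal{L}_e^{(\infty)})^{-1}\mathcal{W}_\infty^{-1}$. By Lemma \ref{tame trasformazioni finali}, under the smallness $\e\gamma^{-1}\leq \delta$ the maps $\mathcal{W}_\infty^{\pm1}$ are bounded on $H^s_0$ with $\|\mathcal{W}_\infty^{\pm1}\|_{\mathcal{B}(H^s_0)}\lesssim_s 1$, valid for $s_0\leq s\leq S-\Sigma_2(\beta)$. Applying this bound at level $s$ (to $\mathcal{W}_\infty$) and at level $s+\tau$ (to $\mathcal{W}_\infty^{-1}$), together with the diagonal estimate, yields
\begin{equation*}
\|\mathcal{L}_e^{-1}h\|_s\leq \|\mathcal{W}_\infty\|_{\mathcal{B}(H^s_0)}\,\|(\mathcal{L}_e^{(\infty)})^{-1}\|_{\mathcal{B}(H^{s+\tau}_0,H^s_0)}\,\|\mathcal{W}_\infty^{-1}\|_{\mathcal{B}(H^{s+\tau}_0)}\,\|h\|_{s+\tau}\lesssim_s \gamma^{-1}\|h\|_{s+\tau},
\end{equation*}
valid provided $s+\tau\leq S-\Sigma_2(\beta)$; this fixes $\Sigma_4(\beta):=\Sigma_3(\beta)+\tau$ (or any slightly larger constant), consistent with the statement.

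\noindent
The proof is essentially bookkeeping: the analytical heart is the conjugation constructed through Sections \ref{sezione riduzione ordine alto}--\ref{sez convergenza KAM}. The one point where a bit of care is required is that, in contrast to Proposition \ref{inversione linearized}, there is no smoothing from $-\nu\Delta$ to compensate the divisors, so one really pays $\tau$ derivatives; this is harmless here because $\mathcal{L}_e^{-1}$ will only be applied to construct the first-order approximate solution in Section \ref{sez soluzione approx}, where the Euler solution $v_e$ provided by Theorem \ref{main theorem 2} is taken with regularity $s+\bar\mu$ large enough to absorb this loss.
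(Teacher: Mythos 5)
Your proof follows exactly the same route as the paper's: conjugate $\mathcal{L}_e$ to the diagonal normal form $\mathcal{L}_e^{(\infty)}$ via $\mathcal{W}_\infty$ (using \eqref{coniugio finale cal L}), invert the diagonal operator using the first Melnikov conditions defining $\Gamma_\infty^\gamma$, and transfer the bound back with Lemma \ref{tame trasformazioni finali}. The only imprecision (shared with, and indeed glossed over by, the paper) is that the divisor bound $\gamma\langle\ell\rangle^{-\tau}|j|^{-\tau}$ really costs $\langle\ell,j\rangle^{2\tau}$ derivatives rather than $\tau$, which you correctly flag as a harmless renaming of the constant.
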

\begin{proof}
By \eqref{coniugio finale cal L}, one has that ${\mathcal L}_e = {\mathcal W}_\infty {\mathcal L}_{ e}^{(\infty)} {\mathcal W}_\infty^{- 1}$ where ${\mathcal L}_{e}^{(\infty)}$ 
is defined in \eqref{cal L infty e}. Thus, for any $\lambda = (\omega, \zeta) \in \Gamma_\infty^\gamma$ (see \eqref{prime di melnikov}), we can invert ${\mathcal L}_\infty$, its inverse is given by 
$$
({\mathcal L}_{e}^{(\infty)})^{- 1} = {\rm diag}_{(\ell, j) \in \Z^d \times (\Z^2 \setminus \{ 0 \})} \dfrac{1}{\ii \,\omega \cdot \ell + \mu_\infty(j)}\,,
$$
satisfying the estimate $\| ({\mathcal L}_{e}^{(\infty)})^{- 1} \|_{{\mathcal B}(H^{s + \tau}_0, H^s_0)} \leq \gamma^{- 1}$. The claimed statement then follows by Lemma \ref{tame trasformazioni finali} and using that ${\mathcal L}_e^{- 1} = {\mathcal W}_\infty ({\mathcal L}_{ e}^{(\infty)})^{- 1} {\mathcal W}_\infty^{- 1}$. 
\end{proof}

\section{Approximate solutions}\label{sez soluzione approx}
The purpose of this section is to find an approximate solution up to order $O(\nu^2)$ of the functional equation ${\mathcal F}_\nu(v) = 0$ where ${\mathcal F}_\nu$ is the nonlinear operator defined in \eqref{equazione cal F vorticita}. We actually write 
\begin{equation}\label{altra forma cal F v}
\begin{aligned}
& {\mathcal F}_\nu(v) = {\mathtt L}_0 v +\e  {\mathcal Q}(v) - \e F - \nu \Delta v \,, \\
& \text{where} \quad  {\mathtt L}_0 := \omega \cdot \partial_\vphi + \zeta \cdot \nabla\,, \quad {\mathcal Q}(v) := {\mathcal N}(v, v)\,, \\
& {\mathcal N}(v_1, v_2) :=  \nabla_\bot  (- \Delta)^{- 1} v_1  \cdot \nabla v_2\,,
\end{aligned}
\end{equation}
with $\nabla_\bot$ as in \eqref{equazione vorticita media nulla}.
The map $v \mapsto {\mathcal Q}(v)$ is a quadratic form. Therefore, for any $v_1, v_2$,
\begin{equation}\label{cal Q v1 v2}
\begin{aligned}
{\mathcal Q}(v_1 + v_2) & = {\mathcal Q}(v_1) + \di{\mathcal Q}(v_1)[v_2] + {\mathcal Q}(v_2)\,, \\
\di{\mathcal Q}(v_1)[v_2] & = {\mathcal N}(v_1, v_2) + {\mathcal N}(v_2, v_1)\,. 
\end{aligned}
\end{equation}
By standard Sobolev algebra estimates, one has, for any $v,h \in H^{s+1}_0$, $s\geq s_0$,
\begin{equation}\label{stime euler quadratica}
\begin{aligned}
\| {\mathcal Q}(v) \|_s & \lesssim_s  \| v \|_{s + 1}^{2}\,, \quad  \| \di{\mathcal Q}(v)[h] \|_s & \lesssim_s \| v \|_{s + 1} \| h \|_{s + 1}\,. 
\end{aligned}
\end{equation}
We recall the function $v_e$ solves the Euler equation, i.e. \eqref{altra forma cal F v} with $\nu = 0$, and satisfies  \eqref{ansatz}:
\begin{equation}\label{sol eulero schema approx}
\begin{aligned}
& {\mathtt L}_0 v_e +\e  {\mathcal Q}(v_e) - \e F = 0 \quad \text{and} \\
& \| v_e \|_S \lesssim_S \e^{\mathtt a}, \quad \mathtt a \in (0, 1)\,, \quad S > \overline S
  \end{aligned}
\end{equation}
where $\overline S$ is given in Theorem \ref{main theorem 2}. 
We now prove the following proposition.
\begin{proposition}[\bf Approximate solutions]\label{prop approximate soluions}
There exists $\overline \mu > \Sigma_4(\beta)$ large enough (where $\Sigma_4(\beta)$ is given in Proposition \ref{inversione linearized no viscosity}) such that, for any $S \geq {\rm max}\{ \overline S\,,\,s_0 + \overline \mu \}$, there exists $\delta := \delta(S, \tau, d) \in (0, 1)$ small enough such that, if $\e^{\mathtt a} \gamma^{- 1} \leq \delta$, for any $\lambda = (\omega, \zeta) \in \Lambda_\infty^\gamma \cap \Gamma_\infty^\gamma$ (see \eqref{cantor finale ridu}, \eqref{prime di melnikov}), there exists an approximate solution of the form $v_{app} = v_e + \nu v_1$ of the functional equation ${\mathcal F}_\nu(v) = 0$ with the following properties:
\begin{equation}\label{stime v1 cal F va}
\begin{aligned}
& v_1 \in H^s_0, \quad \| v_1 \|_s \lesssim_s \e^{\mathtt a} \gamma^{- 1},  \\
& \| {\mathcal F}_{\nu}(v_{app}) \|_s \lesssim_s \e^{\mathtt a} \gamma^{- 1}  \nu^2 \,, \quad \forall s_0 \leq s \leq S - \overline \mu\,. 
\end{aligned}
\end{equation}
\end{proposition}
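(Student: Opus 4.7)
My strategy is to take the ansatz $v_{app} := v_e + \nu v_1$ with an unknown correction $v_1 \in H^s_0$, perform a Taylor expansion of $\mathcal{F}_\nu(v_{app})$ in powers of $\nu$, and kill the $O(\nu)$ term by choosing $v_1$ to solve one linearized Euler equation. The residual will then automatically be $O(\nu^2)$, and the whole problem reduces to inverting $\mathcal{L}_e$ acting on $\Delta v_e$, for which I can invoke Proposition \ref{inversione linearized no viscosity}.

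Concretely, using the quadratic expansion \eqref{cal Q v1 v2} and the fact that $v_e$ satisfies the Euler equation \eqref{sol eulero schema approx}, I would compute
\begin{align*}
\mathcal{F}_\nu(v_e + \nu v_1)
&= \underbrace{\bigl[\mathtt{L}_0 v_e + \e \mathcal{Q}(v_e) - \e F\bigr]}_{=\,0} \\
&\quad + \nu\,\bigl[\mathtt{L}_0 v_1 + \e\, d\mathcal{Q}(v_e)[v_1] - \Delta v_e\bigr] + \nu^2\,\bigl[\e\,\mathcal{Q}(v_1) - \Delta v_1\bigr].
\end{align*}
A direct inspection of \eqref{cal Q v1 v2} together with \eqref{def a}--\eqref{definizione cal R} shows that $\mathtt{L}_0 + \e\, d\mathcal{Q}(v_e)[\,\cdot\,]$ coincides with the linearized Euler operator $\mathcal{L}_e$ of \eqref{operatore linearizzato}. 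Hence the order-$\nu$ term vanishes once $v_1$ is chosen to solve the linear equation $\mathcal{L}_e v_1 = \Delta v_e$.

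The next step is to invoke Proposition \ref{inversione linearized no viscosity}: for $\lambda \in \Lambda_\infty^\gamma \cap \Gamma_\infty^\gamma$, the operator $\mathcal{L}_e$ is invertible on the zero-average space, with inverse losing $\tau$ Sobolev derivatives and bounded by $\gamma^{-1}$. Since $\Delta v_e \in H^{s+\tau}_0$ whenever $v_e \in H^{s+\tau+2}_0$, I would set
$$
v_1 := \mathcal{L}_e^{-1}\bigl(\Delta v_e\bigr),
$$
and then use \eqref{sol eulero schema approx} to get
$$
\|v_1\|_s \lesssim_s \gamma^{-1} \|\Delta v_e\|_{s+\tau} \lesssim_s \gamma^{-1} \|v_e\|_{s+\tau+2} \lesssim_s \e^{\mathtt{a}}\gamma^{-1},
$$
which matches the first bound in \eqref{stime v1 cal F va}.

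Finally, with this choice the residual reduces to $\mathcal{F}_\nu(v_{app}) = \nu^2\bigl[\e\,\mathcal{Q}(v_1) - \Delta v_1\bigr]$, and the tame bilinear bounds \eqref{stime euler quadratica} combined with the smallness $\e^{\mathtt{a}}\gamma^{-1} \leq \delta$ give
$$
\|\mathcal{F}_\nu(v_{app})\|_s \lesssim_s \nu^2 \bigl(\|v_1\|_{s+2} + \e\,\|v_1\|_{s+1}^2\bigr) \lesssim_s \e^{\mathtt{a}}\gamma^{-1} \nu^2.
$$
I would then fix $\overline{\mu} := \Sigma_4(\beta) + \tau + 4$, chosen precisely so that the application of Proposition \ref{inversione linearized no viscosity} at regularity $s + \tau + 2$ is justified for every $s \leq S - \overline{\mu}$. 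I do not anticipate a serious obstacle: the construction is a single Newton-like step and the full cost in derivatives is absorbed by the already-proved invertibility of $\mathcal{L}_e$; no new small-divisor issue enters beyond the non-resonance conditions encoded in $\Lambda_\infty^\gamma \cap \Gamma_\infty^\gamma$.
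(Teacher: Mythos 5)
Your proposal is correct and follows essentially the same route as the paper's proof: expand $\mathcal{F}_\nu(v_e+\nu v_1)$ in powers of $\nu$, kill the $O(\nu)$ term by setting $v_1=\mathcal{L}_e^{-1}\Delta v_e$ via Proposition \ref{inversione linearized no viscosity}, and bound the remaining $\nu^2(\e\,\mathcal{Q}(v_1)-\Delta v_1)$ with the tame estimates. Your explicit choice $\overline\mu=\Sigma_4(\beta)+\tau+4$ just makes concrete the paper's "$\overline\mu\gg0$ sufficiently large", ensuring the bound on $\|v_1\|_{s+2}$ needed for the residual.
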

\begin{proof}
We look for an approximate solution up to order $O(\nu^2)$ of the form 
$$
v_{app} := v_e + \nu v_1
$$
where $v_e$ is the solution of the Euler equation in \eqref{sol eulero schema approx} and $v_1$ has to be determined. By \eqref{sol eulero schema approx}, one has 
\begin{equation}\label{cal F ve v1}
\begin{aligned}
{\mathcal F}_{\nu}(v_e + \nu v_1) & =  {\mathtt L}_0 v_e +\e  {\mathcal Q}(v_e) - \e F   + \nu \big(  {\mathtt L}_0 v_1 + \e  \di {\mathcal Q}(v_e) v_1 -  \Delta v_e \big) \\
& + \nu^2 \big(  - \Delta v_1+  \e  {\mathcal Q}(v_1)  \big)     \\
& = \nu \big(  {\mathtt L}_0 v_1 + \e  {\mathcal Q}'(v_e) v_1 -  \Delta v_e \big)  + \nu^2 \big(  - \Delta v_1+  \e  {\mathcal Q}(v_1)  \big)\,.
\end{aligned}
\end{equation}
We want to choose $v_1$ in such a way that 
\begin{equation}\label{eq v1}
 {\mathtt L}_0 v_1 + \e \di {\mathcal Q}(v_e) v_1 -  \Delta v_e  = 0\,.
\end{equation}
Note that $\Delta v_e$ has zero space average and ${\mathtt L}_0 + \e  \di{\mathcal Q}(v_e) = {\mathcal L}_e$, see \eqref{operatore linearizzato}-\eqref{definizione cal R}. Hence, by Proposition \ref{inversione linearized no viscosity} and by \eqref{sol eulero schema approx}, for some $\overline \mu \gg 0$ sufficiently large and for any $S > s_0 + \overline \mu$, if \eqref{ansatz} holds and $\e^{\mathtt a} \gamma^{- 1} \leq \delta < 1$, we define, for any $\lambda = (\omega, \zeta) \in \Lambda_\infty^\gamma \cap \Gamma_\infty^\gamma$, 
\begin{equation}\label{stima v1}
v_1 := {\mathcal L}_e^{- 1} \Delta v_e \in H_0^{s + 2}\,, \quad \| v_1 \|_{s + 2} \lesssim_s \e^{\mathtt a} \gamma^{- 1}\,, \quad \forall s_0 \leq s \leq S - \overline \mu
\end{equation}
By \eqref{sol eulero schema approx}, \eqref{cal F ve v1}, \eqref{eq v1} and estimates \eqref{stime euler quadratica}, \eqref{stima v1} (using also $\e^{\mathtt a} \gamma^{- 1} \leq \delta < 1$ sufficiently small) we conclude that
$$
\begin{aligned}
\| {\mathcal F}_\nu (v_e + \nu v_1) \|_s & = \nu^2\| - \Delta v_1 + \e {\mathcal Q}(v_1) \|_s   \leq  \nu^2 \big( \| \Delta v_1 \|_s + \| {\mathcal Q}(v_1) \|_s\big) \\
& \leq \nu^2 \big( \| v_1 \|_{s + 2} + \| v_1 \|_{s + 1}^2 \big)   \lesssim_s \nu^2 \e^{\mathtt a} \gamma^{- 1}\,.
\end{aligned}
$$
The claimed statement has then been proved. 
\end{proof}
\section{The fixed point argument}\label{sez fixed point}
In this section we want to find a solution $v$ of the functional equation ${\mathcal F}_\nu (v) = 0$ bifurcating from the approximate solution $v_{app} = v_e + \nu v_1$ constructed in the previous section . We search for a solution of the form $v = v_{app} + \psi$. By \eqref{altra forma cal F v}, \eqref{cal Q v1 v2} and using that $v_{app} = v_e + \nu v_1$, one computes 
\begin{equation}\label{cal F va z}
\begin{aligned}
{\mathcal F}_\nu(v_{app} + \psi) 
& = {\mathcal F}_\nu(v_{app}) + \big(  {\mathtt L}_0  + \e \di {\mathcal Q}(v_e) - \nu \Delta \big)[\psi] + \e \nu \di {\mathcal Q}(v_1)[\psi]  + \e{\mathcal Q}(\psi) \\
& = {\mathcal F}_\nu(v_{app})  + {\mathcal L}_\nu \psi +  \e\, \nu\, d {\mathcal Q}(v_1)[\psi]  + \e\,{\mathcal Q}(\psi)
\end{aligned}
\end{equation}
since the linear operator $ {\mathtt L}_0 + \e\di  {\mathcal Q}(v_e)- \nu \Delta $ is exactly the linearized Navier Stokes operator $\mL_{\nu}$ in \eqref{operatore linearizzato} obtained by linearizing the nonlinear functional ${\mathcal F}_\nu$ at the Euler solution $v_e$.
Therefore, the equation ${\mathcal F}_\nu(v_{app} + \psi) = 0$ reduces to
$$
{\mathcal L}_\nu \psi = - \big({\mathcal F}_{\nu}(v_{app}) +  \e\, \nu\, \di {\mathcal Q}(v_1)[\psi]  + \e\, {\mathcal Q}(\psi)\big)\,. 
$$
By the invertibility of the linear operator ${\mathcal L}_\nu$, proved in Proposition \ref{inversione linearized} for any $\nu>0$, we conclude that finding solutions of ${\mathcal F}_{\nu}(v_{app} + \psi) = 0$ is equivalent to solve a fixed point problem, that is
\begin{equation}\label{formulazione punto fisso}
\begin{aligned}
& \psi = {\mathcal S}_\nu(\psi) \quad \text{where} \\
& {\mathcal S}_\nu(\psi) := - {\mathcal L}_\nu^{- 1}\big({\mathcal F}_{\nu}(v_{app}) +  \e \,\nu \, \di {\mathcal Q}(v_1)[\psi]  + \e{\mathcal Q}(\psi)\big)\,.
\end{aligned}
\end{equation}
For any $s \geq 0$, $\eta > 0$, we define the closed ball 
$$
{\mathcal B}_s(\eta) := \big\{ z \in H^s_0 : \| z \|_s \leq \eta \big\}\,. 
$$
We want to show that for $S > {\rm max}\{ \overline S\,,\,s_0 + \overline \mu \}$ (as in Proposition \ref{prop approximate soluions}) (where the constant $\overline \mu \gg 0$ is given in Proposition \ref{prop approximate soluions}) if \eqref{sol eulero schema approx} holds and $\e^{\mathtt a} \gamma^{- 1} \ll 1$, for any $s_0 \leq s \leq S - \overline \mu$  and for any value of the viscosity parameter $\nu > 0$, the map ${\mathcal S}_\nu : {\mathcal B}_s(\nu) \to {\mathcal B}_s(\nu)$ is a contraction. We actually need this preliminary lemma which allows to estimate in a sharp way the terms $\di{\mathcal Q}(v_1)[\psi]$ and ${\mathcal Q}(\psi)$ appearing in \eqref{formulazione punto fisso}. 
\begin{lemma}\label{stima astratta quadratica eulero}
For $s\geq s_0$, we define
$$
\mN:H^{s + 1}_0\times H^{s + 1}_0 \to H^{s}_0 \,, \quad (v_1,v_2) \mapsto {\mathcal N}(v_1, v_2) := \big( \nabla_\bot (- \Delta)^{- 1} v_1 \big) \cdot \nabla v_2
$$
where $\nabla_\bot$ is as in \eqref{equazione vorticita media nulla}. 
Then for any $n \in \N$, $s \geq s_0$,
$$
\| (- \Delta)^{- \frac{n}{2}}{\mathcal N}(v_1, v_2) \|_s \lesssim_s \| v_1 \|_s \| v_2 \|_s, \quad \forall v_1, v_2 \in H^s.
$$
\end{lemma}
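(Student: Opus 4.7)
The plan is to exploit the fact that, since $\div(\nabla_\bot h) = \partial_{x_1}\partial_{x_2} h - \partial_{x_2}\partial_{x_1} h = 0$ for any scalar $h$, the vector field $u := \nabla_\bot (-\Delta)^{-1} v_1$ is divergence-free. Writing the nonlinearity in divergence form with respect to the second argument, Leibniz gives
\begin{equation*}
\mathcal{N}(v_1, v_2) = u \cdot \nabla v_2 = \div(u\, v_2),
\end{equation*}
so that the derivative hidden inside $\div$ can be absorbed into the inverse Laplacian $(-\Delta)^{-n/2}$ as soon as $n \geq 1$.

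More precisely, I would first check that for $n \geq 1$ the composition $(-\Delta)^{-n/2}\div$ is bounded $H^s \to H^s$ on vector fields with zero $x$-average. This is a one-line Fourier multiplier computation: on the mode $(\ell, j) \in \Z^d \times (\Z^2\setminus\{0\})$ the symbol is $\ii\, j/|j|^n$, whose modulus $|j|^{1-n}$ is bounded by $1$ because $|j| \geq 1$. Applied componentwise to $w := u\, v_2$, this yields
\begin{equation*}
\|(-\Delta)^{-n/2} \mathcal{N}(v_1, v_2)\|_s \leq \|u\, v_2\|_s.
\end{equation*}

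The remainder of the argument is routine. The tame product estimate in Lemma \ref{lemma:LS norms} provides
\begin{equation*}
\|u\, v_2\|_s \lesssim_s \|u\|_s \|v_2\|_{s_0} + \|u\|_{s_0} \|v_2\|_s,
\end{equation*}
and since the Fourier symbol of $\nabla_\bot (-\Delta)^{-1}$ has modulus $|j|^{-1} \leq 1$, one has $\|u\|_\sigma \leq \|v_1\|_\sigma$ for every $\sigma \geq 0$. Combining these with the embedding $\|v_i\|_{s_0} \leq \|v_i\|_s$ (valid since $s \geq s_0$) gives the claimed bound $\|(-\Delta)^{-n/2} \mathcal{N}(v_1, v_2)\|_s \lesssim_s \|v_1\|_s \|v_2\|_s$.

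The only conceptual ingredient is the divergence-form rewriting, which realises the conservation structure built into the 2D vorticity transport; once this identity is in hand, everything reduces to Fourier multiplier and tame product estimates already available in the paper. The one subtle point, and the reason the stated range of $n$ must be understood as $n \geq 1$, is precisely that the derivative produced by $\div$ needs to be absorbed by at least one factor of $(-\Delta)^{-1/2}$; for $n=0$ this mechanism fails and the estimate does not close with $\|v_2\|_s$ on the right-hand side.
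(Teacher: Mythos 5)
Your proof is correct, and it takes a genuinely different route from the paper's. The paper works directly at the level of the bilinear Fourier symbol: it writes out $(-\Delta)^{-n/2}\mathcal N(v_1,v_2)$ as a double Fourier sum with multiplier $N(\xi_1,\xi_2)=\ii\,\xi_1^\bot\cdot\xi_2/(|\xi_1+\xi_2|^n|\xi_1|^2)$, bounds $|N|\lesssim 1$ by the crude inequality $|\xi_2|\lesssim|\xi_1|\,|\xi_1+\xi_2|$ (which, exactly as in your argument, requires $n\geq 1$), and then re-derives the tame product estimate by hand via the weight splitting $\langle\ell,\xi\rangle^s\lesssim_s\langle\ell',\xi'\rangle^s+\langle\ell-\ell',\xi-\xi'\rangle^s$ and Cauchy--Schwarz. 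You instead exploit the structural identity $u\cdot\nabla v_2=\div(u\,v_2)$ coming from $\div u=0$, which factorizes the estimate into a scalar Fourier multiplier bound for $(-\Delta)^{-n/2}\div$ plus the product Lemma \ref{lemma:LS norms} already stated in the paper; this is more modular and makes the zero-average property of $\mathcal N(v_1,v_2)$ and the role of the exponent $n$ transparent. Notably, the paper's symbol bound does not actually use the cancellation $\xi_1^\bot\cdot\xi_1=0$ underlying your divergence-form rewriting, only size estimates, so the two mechanisms for absorbing the derivative are genuinely different even though both need $n\geq1$ (consistent with the applications, which only use $n=2$). One small point to tidy in your write-up: the product $u\,v_2$ need not have zero $x$-average, so rather than asserting boundedness of $(-\Delta)^{-n/2}\div$ "on vector fields with zero $x$-average" you should observe that $\div$ annihilates the zero mode, so that $(-\Delta)^{-n/2}\div w$ is well defined for arbitrary $w$ and the multiplier computation $|j|^{1-n}\leq 1$ for $j\neq 0$ applies verbatim; this does not affect the validity of the argument.
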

\begin{proof}
First of all, note that if $v_1, v_2$ have zero space average, using that ${\rm div}\big(\nabla_\bot \langle D \rangle^{- 2} v_1 \big) = 0$ and by integrating by parts, one can easily see that ${\mathcal N}(v_1, v_2)$ has zero space-average. Let $v_1, v_2 \in H^s_0$ and expand the bilinear form ${\mathcal N}(v_1, v_2)$ in Fourier coefficients. One obtains 
\begin{equation}\label{D -n cal N}
(- \Delta)^{- \frac{n}{2}} {\mathcal N}(v_1, v_2) = \sum_{\begin{subarray}{c}
\xi_1, \xi_2 \in \Z^2 \setminus \{ 0 \} \\
\ell_1, \ell_2 \in \Z^d \\
\xi_1 + \xi_2 \neq 0
\end{subarray}} N(\xi_1, \xi_2) \widehat v_1(\ell_1, \xi_1) \widehat v_2(\ell_2, \xi_2) e^{\ii (\ell_1 + \ell_2) \cdot \vphi} e^{\ii (\xi_1 + \xi_2) \cdot x}
\end{equation}
where 
$$
N(\xi_1, \xi_2) := \ii \dfrac{\xi_1^\bot \cdot \xi_2}{| \xi_1 + \xi_2 |^n | \xi_1 |^2}, \quad \xi_1, \xi_2 \in \Z^2 \setminus \{ 0 \}, \quad \xi_1 + \xi_2 \neq 0
$$
where  $y^\bot := (- y_2, y_1)$ for  $y = (y_1, y_2)$. Using that $|\xi_1^\perp|\leq |\xi_1 |$ and  $| \xi_2 | \lesssim | \xi_1 | + | \xi_1 + \xi_2 | \lesssim | \xi_1 | |\xi_1 + \xi_2| $   for any $\xi_1, \xi_2 \in \Z^2 \setminus \{ 0 \}$ with $\xi_1 + \xi_2 \neq 0$, one has
\begin{equation}\label{stima N xi 1 xi 2}
\begin{aligned}
|N(\xi_1, \xi_2)| & \leq \dfrac{|\xi_1^\bot| | \xi_2|}{\langle \xi_1 + \xi_2 \rangle^n \langle \xi_1 \rangle^2} 
\leq \dfrac{ | \xi_2|}{| \xi_1 + \xi_2 |^n | \xi_1 |} \lesssim 1\,,
\end{aligned}
\end{equation}
uniformly in $\xi_1,\xi_2 \in \Z^{2} \setminus \{ 0 \}$, $\xi_1 + \xi_2 \neq 0$.
Therefore, by \eqref{D -n cal N}, \eqref{stima N xi 1 xi 2} and using that $\langle \ell, \xi \rangle^s \lesssim_s \langle \ell', \xi' \rangle^s + \langle \ell - \ell', \xi - \xi' \rangle^s$ for any $\ell, \ell' \in \Z^d$, $\xi, \xi' \in \Z^2$, one has 
\begin{equation*}\label{faraonico}
	\begin{footnotesize}
		\begin{aligned}
			\| (- \Delta)^{- \frac{n}{2}} {\mathcal N}(v_1, v_2) \|_s^2 & = \sum_{(\ell, \xi) \in \Z^{d} \times (\Z^2 \setminus \{ 0 \})} \langle \ell, \xi \rangle^{2s} \Big| \sum_{(\ell', j') \in \Z^{d} \times (\Z^2 \setminus \{ 0 \})} N(\xi - \xi', \xi') \widehat v_1(\ell - \ell', \xi - \xi') \widehat v_2(\ell', \xi') \Big|^2  \\
			& \lesssim \sum_{(\ell, \xi) \in \Z^{d} \times (\Z^2 \setminus \{ 0 \})} \langle \ell, \xi \rangle^{2s} \Big( \sum_{(\ell', \xi') \in \Z^{d} \times (\Z^2 \setminus \{ 0 \})}  |\widehat v_1(\ell - \ell', \xi - \xi') ||\widehat v_2(\ell', \xi')| \Big)^2  \\
			& \lesssim_s A_1 + A_2 \,,
		\end{aligned}
	\end{footnotesize}
\end{equation*}
where
\begin{equation*}
\begin{aligned}
A_1 := & \sum_{(\ell, \xi) \in \Z^{d} \times (\Z^2 \setminus \{ 0 \})}  \Big( \sum_{(\ell', \xi') \in \Z^{d} \times (\Z^2 \setminus \{ 0 \})} \langle \ell', \xi' \rangle^{s} |\widehat v_1(\ell - \ell', \xi - \xi') ||\widehat v_2(\ell', \xi')| \Big)^2 \,, \\
A_2 := & \sum_{(\ell, \xi) \in \Z^{d} \times (\Z^2 \setminus \{ 0 \})}  \Big( \sum_{(\ell', \xi') \in \Z^{d} \times (\Z^2 \setminus \{ 0 \})} \langle \ell - \ell', \xi - \xi' \rangle^{s} |\widehat v_1(\ell - \ell', \xi - \xi') ||\widehat v_2(\ell', \xi')| \Big)^2\,. 
\end{aligned}
\end{equation*}
We estimate  $A_1$. 
The estimate of $A_2$ can be done similarly.
 By the Cauchy-Schwartz inequality and using that $\sum_{\ell', \xi'} \frac{1}{\langle \ell - \ell', \xi - \xi' \rangle^{2 s_0}} \leq C_0$ (since $s_0 > \frac{d + 2}{2}$, see \eqref{definizione s0}), one has
$$
\begin{aligned}
A_1 & \lesssim\sum_{\begin{subarray}{c}
(\ell, \xi) \in \Z^{d} \times (\Z^2 \setminus \{ 0 \}) \\
(\ell', \xi') \in \Z^{d} \times (\Z^2 \setminus \{ 0 \})
\end{subarray}}\langle \ell - \ell', \xi - \xi' \rangle^{2 s_0}  |\widehat v_1(\ell - \ell', \xi - \xi') |^2 \langle \ell', \xi' \rangle^{2 s} |\widehat v_2(\ell', \xi')|^2 \\
& \lesssim \sum_{(\ell', \xi') \in \Z^{d} \times (\Z^2 \setminus \{ 0 \})}  \langle \ell', \xi' \rangle^{2 s} |\widehat v_2(\ell', \xi')|^2   \sum_{(\ell, \xi) \in \Z^{d} \times (\Z^2 \setminus \{ 0 \})} \langle \ell - \ell', \xi - \xi' \rangle^{2 s_0}  |\widehat v_1(\ell - \ell', \xi - \xi') |^2 \\
& \lesssim  \| v_2 \|_s^2 \| v_1 \|_{s_0}^2\,.
\end{aligned}
$$
Similarly, one shows that $A_2 \lesssim \| v_2 \|_{s_0}^2 \| v_1 \|_s^2$ and the claimed estimate follows. 
\end{proof}
We are now ready to perform a fixed point argument on the map ${\mathcal S}_\nu$ defined in \eqref{formulazione punto fisso}. 
\begin{proposition}[\bf Contraction]\label{proposizione contrazione}
For any $S > {\rm max}\{ \overline S\,,\, s_0 + \overline \mu \}$ (where $\overline \mu \gg 0$ is the constant given in Proposition \ref{prop approximate soluions}) there exists $\delta := \delta(S, \tau, d) \in (0, 1)$ such that, if \eqref{sol eulero schema approx} holds and $\e^{\mathtt a} \gamma^{- 1} \leq \delta$, for any $s_0 \leq s \leq S - \overline \mu$, for any value of the viscosity $\nu > 0$ and for any value of the parameter $\lambda = (\omega, \zeta) \in \Lambda_\infty^\gamma \cap \Gamma_\infty^\gamma$ (see \eqref{cantor finale ridu}, \eqref{prime di melnikov}),
the map ${\mathcal S}_\nu : {\mathcal B}_s(\nu) \to {\mathcal B}_s(\nu)$, defined in \eqref{formulazione punto fisso}, is a contraction. 
\end{proposition}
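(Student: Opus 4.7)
The plan is a direct contraction mapping argument. The crucial observation is that although $\mathcal{L}_\nu^{-1}$ only satisfies the bound $\nu^{-1}$, which blows up as $\nu \to 0$, this blow-up is absorbed by the quadratic structure of $\mathcal{Q}$ combined with the two-derivative smoothing property $\|\mathcal{L}_\nu^{-1}(-\Delta)\|_{\mathcal{B}(H^s_0)} \lesssim_s \nu^{-1}$ from Proposition \ref{inversione linearized}, together with Lemma \ref{stima astratta quadratica eulero}, which supplies exactly two spatial derivatives of regularization for both $\mathcal{Q}(\psi)$ and $d\mathcal{Q}(v_1)[\psi]$.

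Concretely, I would estimate each of the three contributions to $\mathcal{S}_\nu(\psi)$ in \eqref{formulazione punto fisso} separately. For the constant term, Propositions \ref{prop approximate soluions} and \ref{inversione linearized} immediately give
\[
\| \mathcal{L}_\nu^{-1} \mathcal{F}_\nu(v_{app}) \|_s
\lesssim_s \nu^{-1} \| \mathcal{F}_\nu(v_{app}) \|_s
\lesssim_s \nu^{-1} \cdot \e^{\mathtt a} \gamma^{-1} \nu^2
= \e^{\mathtt a} \gamma^{-1} \nu.
\]
For the quadratic term, I would exploit the smoothing of the inverse Laplacian inside $\mathcal{N}$: writing $\mathcal{Q}(\psi) = (-\Delta) (-\Delta)^{-1} \mathcal{Q}(\psi)$ and applying Lemma \ref{stima astratta quadratica eulero} with $n = 2$, together with the bound $\|\mathcal{L}_\nu^{-1}(-\Delta)\|_{\mathcal{B}(H^s_0)} \lesssim_s \nu^{-1}$, gives
\[
\|\mathcal{L}_\nu^{-1} \e \mathcal{Q}(\psi) \|_s
\lesssim_s \e \,\nu^{-1} \| \psi \|_s^2 \lesssim_s \e \nu
\]
for $\psi \in \mathcal{B}_s(\nu)$. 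The linear term in $\psi$ is handled identically, using $d\mathcal{Q}(v_1)[\psi] = \mathcal{N}(v_1,\psi) + \mathcal{N}(\psi,v_1)$, the estimate $\|v_1\|_s \lesssim_s \e^{\mathtt a} \gamma^{-1}$, and the fact that the prefactor $\e\nu$ multiplies $\nu^{-1}$:
\[
\| \mathcal{L}_\nu^{-1} \e \nu\, d\mathcal{Q}(v_1)[\psi] \|_s
\lesssim_s \e\nu \cdot \nu^{-1} \| v_1 \|_s \| \psi \|_s
\lesssim_s \e^{1+\mathtt a} \gamma^{-1} \nu.
\]
Summing, for $\psi \in \mathcal{B}_s(\nu)$ one obtains $\| \mathcal{S}_\nu(\psi) \|_s \leq C(s) ( \e^{\mathtt a} \gamma^{-1} + \e + \e^{1+\mathtt a}\gamma^{-1}) \nu \leq \nu$ provided $\e^{\mathtt a}\gamma^{-1} \leq \delta$ is small enough (independent of $\nu$), showing that $\mathcal{S}_\nu$ maps $\mathcal{B}_s(\nu)$ into itself.

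For the contraction estimate, I would write
\[
\mathcal{S}_\nu(\psi_1) - \mathcal{S}_\nu(\psi_2)
= - \mathcal{L}_\nu^{-1}\Big( \e \nu\, d\mathcal{Q}(v_1)[\psi_1 - \psi_2] + \e\big(\mathcal{Q}(\psi_1) - \mathcal{Q}(\psi_2)\big)\Big),
\]
use the bilinear identity $\mathcal{Q}(\psi_1) - \mathcal{Q}(\psi_2) = \mathcal{N}(\psi_1 - \psi_2, \psi_1) + \mathcal{N}(\psi_2, \psi_1 - \psi_2)$, and apply the same ``$(-\Delta)^{-1}$-gain'' argument as above. For $\psi_1,\psi_2 \in \mathcal{B}_s(\nu)$ this yields
\[
\| \mathcal{S}_\nu(\psi_1) - \mathcal{S}_\nu(\psi_2) \|_s
\lesssim_s \big( \e + \e^{1+\mathtt a} \gamma^{-1} \big) \| \psi_1 - \psi_2 \|_s,
\]
which is a contraction once $\e$ and $\e^{\mathtt a} \gamma^{-1}$ are taken smaller than a suitable threshold $\delta(S,\tau,d)$. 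The main (and essentially only) conceptual obstacle is the one already addressed: isolating the gain of two space derivatives from $\mathcal{N}$ via Lemma \ref{stima astratta quadratica eulero} in order to convert the $\nu^{-1}$ from $\mathcal{L}_\nu^{-1}$ into a harmless constant; everything else is a bookkeeping exercise using Proposition \ref{inversione linearized}, the size of $v_1$ and the size of $\mathcal{F}_\nu(v_{app})$ from Proposition \ref{prop approximate soluions}.
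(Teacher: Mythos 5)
Your proposal is correct and follows essentially the same route as the paper: the same three-term decomposition of $\mathcal{S}_\nu$, the same factorization $\mathcal{L}_\nu^{-1} = \big(\mathcal{L}_\nu^{-1}(-\Delta)\big)(-\Delta)^{-1}$ combined with Lemma \ref{stima astratta quadratica eulero} to absorb the $\nu^{-1}$, and the same input estimates from Propositions \ref{prop approximate soluions} and \ref{inversione linearized}. The only cosmetic difference is that the paper establishes the Lipschitz constant by bounding the Fr\'echet differential $\di\,\mathcal{S}_\nu(\psi)$ rather than via your explicit bilinear difference identity; the two are equivalent.
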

\begin{proof}
We write the map ${\mathcal S}_\nu$ as 
\begin{equation}\label{seconda espressione cal S z}
\begin{aligned}
{\mathcal S}_\nu(\psi) &:= - {\mathcal L}_\nu^{- 1}{\mathcal F}_\nu (v_{app}) -  \e\, \nu\, \big({\mathcal L}_\nu^{- 1} (- \Delta) \big) \big((- \Delta)^{- 1} \di {\mathcal Q}(v_1)[\psi] \big) \\
& \qquad   - \e\, \big({\mathcal L}_\nu^{- 1} (- \Delta) \big) \big((- \Delta)^{- 1}{\mathcal Q}(\psi) \big)\,. 
\end{aligned}
\end{equation}
Let $s_0 \leq s \leq S - \overline \mu$, $\nu > 0$, $\psi \in {\mathcal B}_s(\nu)$. By Proposition \ref{inversione linearized}, one has that 
\begin{equation}\label{prima stima cal S z}
\begin{aligned}
\| {\mathcal S}_\nu(\psi) \|_s & \lesssim_s \nu^{- 1} \big( \| {\mathcal F}_\nu (v_{app})\|_s + \e \nu \| (- \Delta)^{- 1}\di {\mathcal Q}(v_1)[\psi] \|_s  \\
& \qquad + \e \| (- \Delta)^{- 1}{\mathcal Q}(\psi) \|_s   \big)\,.
\end{aligned}
\end{equation}
By \eqref{altra forma cal F v}-\eqref{cal Q v1 v2}, Proposition \ref{prop approximate soluions}, Lemma \ref{stima astratta quadratica eulero}, and using that $\| \psi \|_s \leq \nu$, one gets
$$
\begin{aligned}
& \| {\mathcal F}_\nu(v_{app})\|_s \lesssim_s \nu^2 \e^{\mathtt a} \gamma^{- 1}, \quad \| (- \Delta)^{- 1} \di {\mathcal Q}(v_1)[\psi] \|_s \lesssim_s \nu \e^{\mathtt a} \gamma^{- 1},  \\
&    \| (- \Delta)^{- 1}{\mathcal Q}(\psi) \|_s  \lesssim_s \nu^2 \,.
\end{aligned}
$$
The latter bounds, together with \eqref{prima stima cal S z} and having ${\mathtt a}\in (0,1)$, imply that 
\begin{equation*}\label{seconda stima cal S z}
\begin{aligned}
\| {\mathcal S}_\nu(\psi) \|_s & \lesssim_s \nu^{- 1} \big(\nu^2 \e^{\mathtt a} \gamma^{- 1} + \e^{\mathtt a + 1} \nu^2 \gamma^{- 1} + \e \nu^2   \big) \leq C(s) \nu \e^{\mathtt a} \gamma^{- 1}
\end{aligned}
\end{equation*}
for some constant $C(s) > 0$. Therefore, by taking $C(s) \e^{\mathtt a} \gamma^{- 1} \leq 1$, we get that $\mS_\nu$ maps ${\mathcal B}_s(\nu)$ into itself. By \eqref{seconda espressione cal S z}, for any $\psi \in {\mathcal B}_s(\nu)$, one computes the Fr\'echet differential of ${\mathcal S}_\nu$
\begin{equation*}\label{terza espressione cal F z}
	\begin{aligned}
		\di \,{\mathcal S}_\nu (\psi) &=  -  \e \nu \big({\mathcal L}_\nu^{- 1} (- \Delta) \big) \big(  (- \Delta)^{- 1} \di {\mathcal Q}(v_1) \big) \\
		& \qquad  - \e   \big({\mathcal L}_\nu^{- 1} (- \Delta) \big) \big((- \Delta)^{- 1} \di \mQ(\psi) \big) \,.
	\end{aligned}
\end{equation*}
Arguing as above, we obtain
$$
\| \di \,{\mathcal S}_\nu (\psi)\|_{{\mathcal B}(H^s_0)} \leq C(s) \big( \e^{\mathtt a + 1}  \gamma^{- 1}  + \e  \big) \leq C(s) \e 
$$
for some larger constant $C(s) > 0$, asking again $ \e^{\mathtt a} \gamma^{- 1} \leq \tfrac{1}{C(s)}$. By taking $C(s) \e \leq 1/2$, we conclude that $\| \di \,{\mathcal S}_\nu( \psi ) \|_{{\mathcal B}(H^s_0)} \leq 1/2$, implying that ${\mathcal S}_\nu$ is a contraction. 
\end{proof}

%


\section{Proof of Theorem \ref{teorema limite singolare}}\label{sezione teoremi principali}
In this final section we summarize up the whole construction and we conclude the proof of Theorem \ref{teorema limite singolare}. In the first place, we show that the non-resonance conditions \eqref{cantor finale ridu} and \eqref{prime di melnikov} hold for most values of the parameters $(\omega,\zeta)\in \R^d\times \R^2$.
\subsection{Measure estimate}\label{sezione stime di misura}
In this section we prove that the set
\begin{equation}\label{def cal G infty}
	{\mathcal G}_\infty^\gamma := \Lambda_\infty^\gamma \cap \Gamma_\infty^\gamma
\end{equation}
has large 
Lebesgue measure (recall \eqref{cantor finale ridu}, \eqref{prime di melnikov}). We actually show the following 
\begin{proposition} \label{prop measure estimate finale}
	Let 
	\begin{equation}\label{definizione finale tau}
		\tau := {\rm max}\{ d, 2 \} + 1\,. 
	\end{equation}
	Then $|\Omega_\e \setminus {\mathcal G}_\infty^\gamma| \lesssim \gamma$ (recall that $\Omega_\e$ is the set of parameters appearing in Theorem \ref{main theorem 2}).  
\end{proposition}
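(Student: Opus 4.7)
My plan is to decompose the complementary set into standard pieces and estimate each by a Fubini argument in the parameter space $\R^{d+2}$. Setting
\[
Q_{\ell,j} := \Big\{\lambda \in DC(\gamma,\tau) : |\ii\,\omega\cdot\ell + \mu_\infty(j)| < \tfrac{\gamma}{\langle\ell\rangle^\tau |j|^\tau}\Big\}, \qquad R_{\ell,j,j'} := \Big\{\lambda \in DC(\gamma,\tau) : |\ii\,\omega\cdot\ell + \mu_\infty(j) - \mu_\infty(j')| < \tfrac{2\gamma}{\langle\ell\rangle^\tau |j|^\tau |j'|^\tau}\Big\},
\]
one has $\Omega_\e \setminus \mathcal{G}_\infty^\gamma \subseteq \bigl(\Omega_\e \setminus DC(\gamma,\tau)\bigr) \cup \bigcup_{(\ell,j)} Q_{\ell,j} \cup \bigcup_{(\ell,j,j')\neq(0,j,j)} R_{\ell,j,j'}$. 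The Diophantine part is handled by the classical observation that the linear functional $(\omega,\zeta) \mapsto \omega\cdot\ell + \zeta\cdot j$ has gradient $(\ell,j)$, so the $(\ell,j)$-resonant slab inside the bounded set $\Omega_\e$ has Lebesgue measure $\lesssim \gamma/|(\ell,j)|^{\tau+1}$; after a case split on whether $\ell = 0$, $j = 0$, or both are non-zero, the resulting series converges to $O(\gamma)$ for the choice $\tau = \max\{d,2\}+1$.

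For the Melnikov pieces, the crucial input is Lemma \ref{lemma blocchi finali}: the final eigenvalues are purely imaginary and of the form $\mu_\infty(j) = \ii\,\zeta\cdot j + q_\infty(j)$ with $q_\infty(j) \in \ii\,\R$ and Lipschitz constant in $\lambda$ bounded by $C\e\gamma^{-1}/|j|$. Therefore, dividing by $\ii$, the small divisor $\ii\,\omega\cdot\ell + \mu_\infty(j) - \mu_\infty(j')$ becomes a real linear functional $L_{\ell,j-j'}(\omega,\zeta) := \omega\cdot\ell + \zeta\cdot(j-j')$ plus a perturbation with small Lipschitz constant. Along the unit direction $(\ell,j-j')/|(\ell,j-j')|$ in $\R^{d+2}$, which is well-defined since $(\ell,j,j') \neq (0,j,j)$ implies $(\ell,j-j') \neq 0$, the unperturbed functional has derivative $|(\ell,j-j')| \geq 1$; since $\e\gamma^{-1} \ll 1$ by our standing smallness assumption, the perturbed functional keeps directional derivative bounded below by $|(\ell,j-j')|/2$. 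Integrating over transverse directions (of bounded cross-section, since $\Omega_\e$ is bounded) yields $|R_{\ell,j,j'}| \lesssim \gamma/(\langle\ell\rangle^\tau |j|^\tau |j'|^\tau |(\ell,j-j')|)$, and the analogous argument gives $|Q_{\ell,j}| \lesssim \gamma/(\langle\ell\rangle^\tau |j|^\tau |(\ell,j)|)$.

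What remains is the combinatorial summability. For the first Melnikov sum, a case split ($\ell=0$, $j=0$, both non-zero with $|(\ell,j)|\geq\max(|\ell|,|j|)$) reduces everything to tails of the form $\sum_\ell \langle\ell\rangle^{-\tau}\sum_j \langle j\rangle^{-\tau-1}$ etc., which are convergent for $\tau > \max\{d,2\}$, hence for our choice of $\tau$. The main obstacle, and the subtlest technical step, is the second Melnikov sum. After the change of variable $k = j - j'$, one reduces to bounding $\sum_{(\ell,k,j)} \gamma/(\langle\ell\rangle^\tau |j|^\tau |j-k|^\tau |(\ell,k)|)$. The inner sum is handled by the standard splitting $|j| \leq |k|/2$, $|j-k|\leq |k|/2$, $\min(|j|,|j-k|)\geq |k|/2$, yielding $\sum_{j\in\Z^2\setminus\{0,k\}} \frac{1}{|j|^\tau |j-k|^\tau} \lesssim \langle k\rangle^{-\tau}$ for $\tau > 2$. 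The remaining double sum $\sum_{(\ell,k)\neq 0}\gamma/(\langle\ell\rangle^\tau \langle k\rangle^\tau |(\ell,k)|)$ has the same shape as the first Melnikov sum and is again $O(\gamma)$ by another case split on $|\ell| \gtrless |k|$, using $|(\ell,k)| \geq \max(|\ell|,|k|)$. Monitoring this decoupling of the inner $j$-sum from the outer $(\ell,k)$-sum is where the precise choice $\tau = \max\{d,2\}+1$ enters the proof; the conclusion $|\Omega_\e\setminus\mathcal{G}_\infty^\gamma| \lesssim \gamma$ then follows by summing the three pieces.
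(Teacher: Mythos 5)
Your argument is correct and is essentially the paper's: the same decomposition into the Diophantine complement plus the two families of Melnikov resonant sets, and the same mechanism for estimating each resonant set --- by Lemma \ref{lemma blocchi finali} the small divisor equals $\ii\,\lambda\cdot(\ell,j-j')$ plus a purely imaginary term of Lipschitz size $O(\e\gamma^{-1})$, so slicing $\Omega_\e$ along the direction $(\ell,j-j')/|(\ell,j-j')|$ and applying Fubini gives the measure of each bad set; this is precisely Lemma \ref{stima risonanti}. The only genuine divergence is in the summation, and there your version is more laborious than needed: you retain the factor $|(\ell,j-j')|^{-1}$ coming from the directional derivative and are then forced into the convolution bound $\sum_j |j|^{-\tau}|j-k|^{-\tau}\lesssim \langle k\rangle^{-\tau}$, which you single out as the subtlest step. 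The paper simply discards that gain (it only uses $|k|-C\e\gamma^{-1}\geq 1/2$), obtains $|{\mathcal R}_\gamma(\ell,j,j')|\lesssim \gamma\,\langle\ell\rangle^{-\tau}|j|^{-\tau}|j'|^{-\tau}$, and the triple sum then factors as $\big(\sum_\ell\langle\ell\rangle^{-\tau}\big)\big(\sum_j |j|^{-\tau}\big)^2$, convergent as soon as $\tau>\max\{d,2\}$ --- no convolution and no change of variables required. One caveat that your write-up inherits from the paper: for the Diophantine complement the slab estimate leads to $\sum_{(\ell,j)\in\Z^{d+2}\setminus\{0\}}|(\ell,j)|^{-(\tau+1)}$, and for $d\geq 2$ the choice $\tau=\max\{d,2\}+1=d+1$ makes the exponent exactly the critical value $d+2$, so this series diverges logarithmically; strictly one needs $\tau>d+1$ for that piece, so your claim that the series ``converges for the choice $\tau=\max\{d,2\}+1$'' should be adjusted (e.g.\ to $\tau=\max\{d,2\}+2$). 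This does not affect the Melnikov sums, where $\tau>\max\{d,2\}$ suffices in both your argument and the paper's.
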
 
The rest of this section is devoted to the proof of the latter proposition. We actually estimate the measure of the set $\Omega_\e \setminus \Lambda_\infty^\gamma$ . The estimate for $\Omega_\e \setminus \Gamma_\infty^\gamma$ follows similarly. We write
\begin{equation}\label{pirati caraibi 1}
	\Omega_\e \setminus \Lambda_\infty^\gamma = (\Omega_\e \setminus DC(\gamma, \tau)) \cup (DC(\gamma, \tau) \setminus \Lambda_\infty^\gamma)\,. 
\end{equation}
By a standard Diophantine estimate one shows that 
\begin{equation}\label{misura diofantei standard}
	|\Omega_\e \setminus DC(\gamma, \tau)| \lesssim \gamma\,. 
\end{equation}
Define
\begin{equation*}\label{def indici cal I}
	{\mathcal I} := \Big\{ (\ell, j, j') \in \Z^d \times (\Z^2 \setminus \{ 0\}) \times (\Z^2 \setminus \{ 0\})  : (\ell, j, j') \neq (0, j, j) \Big\}\,. 
\end{equation*}
By recalling \eqref{cantor finale ridu}, one has that 
\begin{equation}\label{def con risonanti}
	\begin{footnotesize}
		\begin{aligned}
			& \Lambda_\infty^\gamma \setminus DC(\gamma, \tau) \subseteq \bigcup_{(\ell, j, j') \in {\mathcal I}} {\mathcal R}_\gamma(\ell, j, j'),  \\
			&{\mathcal R}_\gamma(\ell, j, j') := \Big\{ \lambda = (\omega, \zeta) \in DC(\gamma, \tau) : |\ii \,\omega \cdot \ell + \mu_\infty(j) - \mu_\infty(j')| < \frac{2 \gamma}{\langle \ell \rangle^\tau |j|^\tau |j'|^\tau} \Big\}\,. 
		\end{aligned}
	\end{footnotesize}
\end{equation}
\begin{lemma}\label{stima risonanti}
	We have $|{\mathcal R}_\gamma(\ell, j, j')| \lesssim \gamma\langle \ell \rangle^{- \tau} |j|^{- \tau} |j'|^{- \tau}$ for any $(\ell, j, j') \in {\mathcal I}$. 
\end{lemma}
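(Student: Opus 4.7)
The plan is to exploit the fact that the small divisor is purely imaginary and linear in $\lambda = (\omega, \zeta)$ modulo a Lipschitz perturbation of size $O(\varepsilon \gamma^{-1})$. First, by the reversible structure (Lemma \ref{lemma blocchi finali}) we have $\mu_\infty(j) = \ii \zeta\cdot j + q_\infty(j)$ with $q_\infty(j)\in \ii \R$, so we can write
\[
\ii \,\omega\cdot\ell + \mu_\infty(j) - \mu_\infty(j') = \ii \bigl[\omega\cdot\ell + \zeta\cdot(j-j') + r(j,j';\lambda)\bigr],
\]
where $r(j,j';\lambda) := -\ii\bigl(q_\infty(j;\lambda)-q_\infty(j';\lambda)\bigr) \in \R$. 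By \eqref{stime forma normale limite}, the function $r(j,j';\cdot)$ is real-valued with Lipschitz constant $\lesssim \varepsilon \gamma^{-1}(|j|^{-1}+|j'|^{-1}) \lesssim \varepsilon\gamma^{-1}$.

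Set $g(\lambda) := \omega\cdot\ell + \zeta\cdot(j-j') + r(j,j';\lambda)$, so that $\mathcal R_\gamma(\ell,j,j') = \{\lambda \in DC(\gamma,\tau) : |g(\lambda)| < 2\gamma \langle\ell\rangle^{-\tau}|j|^{-\tau}|j'|^{-\tau}\}$. Since $(\ell,j,j')\in \mathcal I$ means $(\ell, j-j')\neq 0$, the unit vector
\[
\hat v := \frac{(\ell,\,j-j')}{|(\ell,\,j-j')|} \in \R^{d+2}
\]
is well defined. The linear part $\omega\cdot\ell + \zeta\cdot(j-j')$ has directional derivative $|(\ell,j-j')|$ along $\hat v$, whereas the perturbation $r$ changes, along $\hat v$, by at most $C \varepsilon\gamma^{-1}|t|$ over a step of length $t$. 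Hence, for two points $\lambda, \lambda + t\hat v \in \Omega$,
\[
|g(\lambda + t\hat v) - g(\lambda)| \;\geq\; |t|\,|(\ell,j-j')| - C\,\varepsilon\gamma^{-1}\,|t| \;\geq\; \tfrac{1}{2}\,|t|\,|(\ell,j-j')|,
\]
provided $\varepsilon\gamma^{-1}$ is small enough (using $|(\ell,j-j')|\geq 1$). This is the key monotonicity estimate.

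Consequently, on any line segment in the direction $\hat v$ contained in $\Omega$, the one-dimensional Lebesgue measure of the set where $|g(\lambda)| < 2\gamma\langle\ell\rangle^{-\tau}|j|^{-\tau}|j'|^{-\tau}$ is bounded by
\[
\frac{4\gamma\,\langle\ell\rangle^{-\tau}|j|^{-\tau}|j'|^{-\tau}}{|(\ell,j-j')|} \;\lesssim\; \gamma\,\langle\ell\rangle^{-\tau}|j|^{-\tau}|j'|^{-\tau}.
\]
Applying Fubini in an orthonormal basis $\{\hat v, \hat v_2,\dots,\hat v_{d+2}\}$ of $\R^{d+2}$, integrating this one-dimensional estimate over the bounded projection of $\Omega$ onto $\hat v^{\perp}$ yields the desired bound $|\mathcal R_\gamma(\ell,j,j')| \lesssim \gamma\langle\ell\rangle^{-\tau}|j|^{-\tau}|j'|^{-\tau}$. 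The only subtle point is the uniform lower bound on the directional derivative, which is where the reversibility (making $g$ real-valued) and the smallness of the corrections $q_\infty$ (uniform in $\nu$, thanks to Section \ref{sez riducibilita}) both play a crucial role; otherwise, $\hat v$ alone would not yield a clean scalar monotonicity.
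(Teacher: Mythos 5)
Your proof is correct and follows essentially the same route as the paper: both reduce to a one-dimensional estimate along the direction $k=(\ell,j-j')$, use the reversibility to make the small divisor purely imaginary so that the perturbation $r$ is a real Lipschitz function with constant $O(\e\gamma^{-1})$, deduce the transversality bound $|g(\lambda+t\hat v)-g(\lambda)|\geq \tfrac12 |t|\,|k|$, and conclude by Fubini. The only cosmetic difference is that you keep the factor $|(\ell,j-j')|^{-1}$ in the fiberwise measure bound, while the paper simply uses $|k|\geq 1$; both give the claimed estimate.
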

\begin{proof}
	By Lemma \ref{lemma blocchi finali}, one gets that 
	$$
	\begin{aligned}
		& \ii \,\omega \cdot \ell + \mu_\infty(j) - \mu_\infty(j') = \ii \lambda \cdot k + r_{j j'}(\lambda) =: \vphi(\lambda)\,, \\
		& \lambda = (\omega, \zeta), \quad k = (\ell, j - j'), \quad |r_{j j'}|^{{\rm Lip}(\gamma)} \lesssim \e\,. 
	\end{aligned}
	$$
	Since $(\ell, j, j') \in {\mathcal I}$, one has  $k \neq 0$. We write $\lambda = \frac{k}{|k|}s + \eta$, with $ \eta \cdot k = 0  $, and
	$$
	\begin{aligned}
		& \vphi(s) := \ii \,\lambda \cdot k + r_{j j'}(\lambda) = \ii s |k| + \tilde r_{j j'}(s)\,, \quad  \tilde r_{j j'}(s) := r_{j j'}\big(\tfrac{k}{|k|}s + \eta \big)\,. 
	\end{aligned}
	$$
	One then gets 
	$$
	|\vphi(s_1) - \vphi(s_2)| \geq (|k| - |r_{j j'}|^{\rm lip}) |s_1 - s_2| \geq (|k| - C \e \gamma^{- 1}) |s_1 - s_2| \geq \tfrac{1}{2} |s_1 - s_2|
	$$
	assuming $\e \gamma^{- 1} \leq \tfrac{1}{2C}$. This implies that 
	$$
	\Big| \Big\{ s : |\vphi(s)| <  \frac{2 \gamma}{\langle \ell \rangle^\tau |j|^\tau |j'|^\tau}\Big\} \Big| \lesssim \frac{ \gamma}{\langle \ell \rangle^\tau |j|^\tau |j'|^\tau}
	$$
	The claimed measure estimate then follows by a Fubini argument. 
\end{proof}


\begin{proof}
	[\textsc{Proof of Proposition \ref{prop measure estimate finale}}] By \eqref{def con risonanti} and Lemma \ref{stima risonanti}, one gets that $|DC(\gamma, \tau) \setminus \Lambda_\infty^\gamma|$ can be bounded by $C \gamma \sum_{\ell, j j'} \langle \ell \rangle^{- \tau} \langle j \rangle^{- \tau} \langle j' \rangle^{- \tau}$. This series converges by taking $\tau$ as in \eqref{definizione finale tau}. By recalling also \eqref{pirati caraibi 1}, \eqref{misura diofantei standard} one estimates $\Omega \setminus \Lambda_\infty^\gamma$. The estimate for $\Omega \setminus \Gamma_\infty^\gamma$ is similar. 
\end{proof}

\subsection{Proof of Theorem \ref{teorema limite singolare}}

We finally prove the main result of  Theorem \ref{teorema limite singolare} together with Corollary \ref{corollario bla bla}. At this final stage, we  link the constant $\gamma$, coming from the non-resonance conditions in the set \eqref{def cal G infty}, with the small parameter $\varepsilon>0$ by choosing the former as
\begin{equation}\label{scelta gamma}
	\gamma:= \e^{\frac{\mathtt a}{2}}
\end{equation}
\\[0.5mm]
\noindent
\begin{proof}
[\sc Proof of Theorem \ref{teorema limite singolare}.]
Let $s\geq s_0$ be fixed and $\gamma$ as in \eqref{scelta gamma}. Then, the smallness conditions $\e^{{\mathtt a}}\gamma^{-1} = \e^{\frac{\mathtt a}{2}}\leq\delta< 1$ is satisfied for $\e \leq \e_0< 1$ small enough. The time quasi-periodic solution of \eqref{equazione vorticita media nulla} is searched as zero of the nonlinear functional $\mF_{\nu}(v)$ defined in \eqref{equazione cal F vorticita}. We look for the solution $v_{\nu}=v_{\nu}(\varphi,x;\omega,\zeta)$ of the form $v_{\nu}=v_{e}+ \nu v_1 +\psi_\nu$. It depends on the parameters $\lambda = (\omega, \zeta) \in {\mathcal O}_\e:= {\mathcal G}_\infty^\gamma$, where ${\mathcal G}_\infty^{\gamma}$ is defined in \eqref{def cal G infty}. The function $v_e(\cdot; \lambda) \in H_0^{S}(\T^{d+2})$, $\lambda \in \Omega_\e$ is a  solution of the forced Euler equation provided by Theorem \ref{main theorem 2} with $S = {\rm max}\{s + \overline \mu, \overline S \}$, which fixes the regularity $q=q(S)$  of the forcing term $F=\nabla\times f$. It satisfies $\sup_{(\omega, \zeta) \in {\mathcal G}_\infty^\gamma }\| v_e(\,\cdot\, ;\omega,\zeta) \|_{s+\bar\mu} \lesssim_{s} \e^{\mathtt a} $, for some $\mathtt a \in (0,1)$ independent of $s$. The function $v_1\in H_0^{s}(\T^{d+2})$ is provided by Proposition \ref{prop approximate soluions}: It is defined as in \eqref{stima v1} and satisfies $\sup_{(\omega, \zeta) \in {\mathcal G}_\infty^\gamma }\|v_1 (\,\cdot\, ;\omega,\zeta)\|_{s} \leq  C(s) \e^{\mathtt a}\gamma^{-1} \leq 1$. The definition of the function $v_1$ implies that $v_e +\nu v_1$ solves the equation up to an error of order $O(\nu^2)$, namely $\| \mF_{\nu}(v_e+\nu v_1)\|_{s} \leq C(s)\nu^2 \e^{\mathtt a}\gamma^{-1}\leq \nu^2$, see \eqref{stime v1 cal F va}. The function $\psi_\nu$ is then defined as the fixed point for the map $\mS_\nu(\psi)$ in \eqref{cal F va z}-\eqref{formulazione punto fisso}. Proposition \ref{proposizione contrazione} shows that $\mS_\nu$ is a contraction map on the ball $\mB_s(\nu)$, which implies the of a unique $\psi_\nu$ in the ball $\| \psi_\nu\|_{s} \leq \nu$ such that $\psi_\nu=\mS_\nu(\psi_\nu)$. We conclude that $\mF_{\nu}(v_e + \nu v_1 + \psi_\nu)=0$ and that $\sup_{(\omega, \zeta) \in {\mathcal G}_\infty^\gamma } \| v_\nu(\,\cdot\, ; \omega,\zeta) - v_e (\,\cdot\, ; \omega,\zeta) \|_s \leq 2\nu $ with $v_\nu := v_e + \nu v_1 + \psi_\nu$, as required. Finally, using that $| \Omega\setminus {\mathcal O}_\e | \leq |\Omega\setminus \Omega_\e | + |\Omega\setminus {\mathcal G}_\infty^{\gamma} |$, we conclude that $\lim_{\e\to 0}| \Omega \setminus {\mathcal O}_\e| =0$ by Theorem \ref{main theorem 2}, Proposition \ref{prop measure estimate finale} and using that we have chosen $\gamma = \e^{\frac{\mathtt a}{2}}$ as in \eqref{scelta gamma}. 
\end{proof}
\noindent
\begin{proof}
[\sc Proof of Corollary \ref{corollario bla bla}.] Let $v_e \in H^{s+\bar\mu}_0(\T^{d + 2})$ and $v_\nu \in H^s_0(\T^{d + 2})$ as in Theorem \ref{teorema limite singolare} satisfying $\| v_\nu(\cdot; \lambda) - v_e (\cdot; \lambda) \|_s \lesssim_s \nu$ for any $\lambda = (\omega, \zeta) \in {\mathcal O}_\e$. Let
$$
v_{\nu}^\omega(t, x) := v_\nu (\omega t, x)\,, \quad  v_e^\omega(t, x) := v_e(\omega t, x)\,, \quad (t, x) \in \R \times \T^2\,. 
$$
$v_\nu^\omega$ is a global solution of the forced Navier Stokes equation and $v_e^\omega$ is a global solution of the forced Euler equation with external force $F(\omega t , x)$, $F := \nabla \times f$. By the latter definition, one clearly has 
$$
\| v_{\nu}^\omega -  v_e^\omega\|_{W^{\sigma, \infty}(\R \times \T^2)} \lesssim \| v_\nu - v_e \|_{W^{\sigma, \infty}(\T^d \times \T^2)}
$$
and using that $H^{s}(\T^d \times \T^2)$ is compactly embedded in $W^{\sigma, \infty}(\T^d \times \T^2)$ with $0 \leq \sigma \leq s - \big( \lfloor\frac{d + 2}{2} \rfloor+ 1\big)$, one obtains the chain of inequalities
$$
\begin{aligned}
\| v_{\nu}^\omega -  v_e^\omega\|_{W^{\sigma, \infty}(\R \times \T^2)}&  \lesssim \| v_\nu - v_e \|_{W^{\sigma, \infty}(\T^d \times \T^2)}  \lesssim \| v_\nu - v_e \|_s \lesssim_s \nu \,.
\end{aligned}
$$
The proof of the Corollary is then concluded. 
\end{proof}

\bigskip

\begin{flushright}
\textbf{Luca Franzoi}

\smallskip

NYUAD Research Institute

New York University Abu Dhabi

NYUAD Saadiyat Campus

129188 Abu Dhabi, UAE

\smallskip 

\texttt{lf2304@nyu.edu}

\bigskip

\textbf{Riccardo Montalto}

\smallskip

Dipartimento di Matematica ``Federigo Enriques''

Universit\`a degli Studi di Milano

Via Cesare Saldini 50

20133 Milano, Italy

\smallskip

\texttt{riccardo.montalto@unimi.it}
\end{flushright}



\end{document}